\title[Stable uniqueness for $KK^G$]{The stable uniqueness theorem for equivariant Kasparov theory}
\author{James Gabe}
\author{Gábor Szabó}
\address{Department of Mathematics and Computer Science, University of Southern\linebreak
\phantom{-}\hspace{2mm} Denmark, Campusvej 55, 5230 Odense, Denmark}
\email{gabe@imada.sdu.dk}
\address{Department of Mathematics, KU Leuven, Celestijnenlaan 200b, box 2400\linebreak
\phantom{-}\hspace{2mm} 3001 Leuven, Belgium}
\email{gabor.szabo@kuleuven.be}
\subjclass[2020]{46L55, 19K35}
\numberwithin{equation}{section}
\begin{document}

% math
\renewcommand\matrix[1]{\left(\begin{array}{*{10}{c}} #1 \end{array}\right)}  % Matrix
\newcommand\set[1]{\left\{#1\right\}}  % Menge

\newcommand{\IA}[0]{\mathbb{A}} \newcommand{\IB}[0]{\mathbb{B}}
\newcommand{\IC}[0]{\mathbb{C}} \newcommand{\ID}[0]{\mathbb{D}}
\newcommand{\IE}[0]{\mathbb{E}} \newcommand{\IF}[0]{\mathbb{F}}
\newcommand{\IG}[0]{\mathbb{G}} \newcommand{\IH}[0]{\mathbb{H}}
\newcommand{\II}[0]{\mathbb{I}} \renewcommand{\IJ}[0]{\mathbb{J}}
\newcommand{\IK}[0]{\mathbb{K}} \newcommand{\IL}[0]{\mathbb{L}}
\newcommand{\IM}[0]{\mathbb{M}} \newcommand{\IN}[0]{\mathbb{N}}
\newcommand{\IO}[0]{\mathbb{O}} \newcommand{\IP}[0]{\mathbb{P}}
\newcommand{\IQ}[0]{\mathbb{Q}} \newcommand{\IR}[0]{\mathbb{R}}
\newcommand{\IS}[0]{\mathbb{S}} \newcommand{\IT}[0]{\mathbb{T}}
\newcommand{\IU}[0]{\mathbb{U}} \newcommand{\IV}[0]{\mathbb{V}}
\newcommand{\IW}[0]{\mathbb{W}} \newcommand{\IX}[0]{\mathbb{X}}
\newcommand{\IY}[0]{\mathbb{Y}} \newcommand{\IZ}[0]{\mathbb{Z}}

\newcommand{\Ia}[0]{\mathbbmss{a}} \newcommand{\Ib}[0]{\mathbbmss{b}}
\newcommand{\Ic}[0]{\mathbbmss{c}} \newcommand{\Id}[0]{\mathbbmss{d}}
\newcommand{\Ie}[0]{\mathbbmss{e}} \newcommand{\If}[0]{\mathbbmss{f}}
\newcommand{\Ig}[0]{\mathbbmss{g}} \newcommand{\Ih}[0]{\mathbbmss{h}}
\newcommand{\Ii}[0]{\mathbbmss{i}} \newcommand{\Ij}[0]{\mathbbmss{j}}
\newcommand{\Ik}[0]{\mathbbmss{k}} \newcommand{\Il}[0]{\mathbbmss{l}}
\renewcommand{\Im}[0]{\mathbbmss{m}} \newcommand{\In}[0]{\mathbbmss{n}}
\newcommand{\Io}[0]{\mathbbmss{o}} \newcommand{\Ip}[0]{\mathbbmss{p}}
\newcommand{\Iq}[0]{\mathbbmss{q}} \newcommand{\Ir}[0]{\mathbbmss{r}}
\newcommand{\Is}[0]{\mathbbmss{s}} \newcommand{\It}[0]{\mathbbmss{t}}
\newcommand{\Iu}[0]{\mathbbmss{u}} \newcommand{\Iv}[0]{\mathbbmss{v}}
\newcommand{\Iw}[0]{\mathbbmss{w}} \newcommand{\Ix}[0]{\mathbbmss{x}}
\newcommand{\Iy}[0]{\mathbbmss{y}} \newcommand{\Iz}[0]{\mathbbmss{z}}

%Geschwungener Stil
\newcommand{\CA}[0]{\mathcal{A}} \newcommand{\CB}[0]{\mathcal{B}}
\newcommand{\CC}[0]{\mathcal{C}} \newcommand{\CD}[0]{\mathcal{D}}
\newcommand{\CE}[0]{\mathcal{E}} \newcommand{\CF}[0]{\mathcal{F}}
\newcommand{\CG}[0]{\mathcal{G}} \newcommand{\CH}[0]{\mathcal{H}}
\newcommand{\CI}[0]{\mathcal{I}} \newcommand{\CJ}[0]{\mathcal{J}}
\newcommand{\CK}[0]{\mathcal{K}} \newcommand{\CL}[0]{\mathcal{L}}
\newcommand{\CM}[0]{\mathcal{M}} \newcommand{\CN}[0]{\mathcal{N}}
\newcommand{\CO}[0]{\mathcal{O}} \newcommand{\CP}[0]{\mathcal{P}}
\newcommand{\CQ}[0]{\mathcal{Q}} \newcommand{\CR}[0]{\mathcal{R}}
\newcommand{\CS}[0]{\mathcal{S}} \newcommand{\CT}[0]{\mathcal{T}}
\newcommand{\CU}[0]{\mathcal{U}} \newcommand{\CV}[0]{\mathcal{V}}
\newcommand{\CW}[0]{\mathcal{W}} \newcommand{\CX}[0]{\mathcal{X}}
\newcommand{\CY}[0]{\mathcal{Y}} \newcommand{\CZ}[0]{\mathcal{Z}}

%Script Stil
\newcommand{\FA}[0]{\mathfrak{A}} \newcommand{\FB}[0]{\mathfrak{B}}
\newcommand{\FC}[0]{\mathfrak{C}} \newcommand{\FD}[0]{\mathfrak{D}}
\newcommand{\FE}[0]{\mathfrak{E}} \newcommand{\FF}[0]{\mathfrak{F}}
\newcommand{\FG}[0]{\mathfrak{G}} \newcommand{\FH}[0]{\mathfrak{H}}
\newcommand{\FI}[0]{\mathfrak{I}} \newcommand{\FJ}[0]{\mathfrak{J}}
\newcommand{\FK}[0]{\mathfrak{K}} \newcommand{\FL}[0]{\mathfrak{L}}
\newcommand{\FM}[0]{\mathfrak{M}} \newcommand{\FN}[0]{\mathfrak{N}}
\newcommand{\FO}[0]{\mathfrak{O}} \newcommand{\FP}[0]{\mathfrak{P}}
\newcommand{\FQ}[0]{\mathfrak{Q}} \newcommand{\FR}[0]{\mathfrak{R}}
\newcommand{\FS}[0]{\mathfrak{S}} \newcommand{\FT}[0]{\mathfrak{T}}
\newcommand{\FU}[0]{\mathfrak{U}} \newcommand{\FV}[0]{\mathfrak{V}}
\newcommand{\FW}[0]{\mathfrak{W}} \newcommand{\FX}[0]{\mathfrak{X}}
\newcommand{\FY}[0]{\mathfrak{Y}} \newcommand{\FZ}[0]{\mathfrak{Z}}

\newcommand{\Fa}[0]{\mathfrak{a}} \newcommand{\Fb}[0]{\mathfrak{b}}
\newcommand{\Fc}[0]{\mathfrak{c}} \newcommand{\Fd}[0]{\mathfrak{d}}
\newcommand{\Fe}[0]{\mathfrak{e}} \newcommand{\Ff}[0]{\mathfrak{f}}
\newcommand{\Fg}[0]{\mathfrak{g}} \newcommand{\Fh}[0]{\mathfrak{h}}
\newcommand{\Fi}[0]{\mathfrak{i}} \newcommand{\Fj}[0]{\mathfrak{j}}
\newcommand{\Fk}[0]{\mathfrak{k}} \newcommand{\Fl}[0]{\mathfrak{l}}
\newcommand{\Fm}[0]{\mathfrak{m}} \newcommand{\Fn}[0]{\mathfrak{n}}
\newcommand{\Fo}[0]{\mathfrak{o}} \newcommand{\Fp}[0]{\mathfrak{p}}
\newcommand{\Fq}[0]{\mathfrak{q}} \newcommand{\Fr}[0]{\mathfrak{r}}
\newcommand{\Fs}[0]{\mathfrak{s}} \newcommand{\Ft}[0]{\mathfrak{t}}
\newcommand{\Fu}[0]{\mathfrak{u}} \newcommand{\Fv}[0]{\mathfrak{v}}
\newcommand{\Fw}[0]{\mathfrak{w}} \newcommand{\Fx}[0]{\mathfrak{x}}
\newcommand{\Fy}[0]{\mathfrak{y}} \newcommand{\Fz}[0]{\mathfrak{z}}

\newcommand{\KKh}{KH}
\newcommand{\IPh}{\mathbb{P}}
\newcommand{\IDh}{\mathbb{D}}
\newcommand{\Kh}{\mathrm{h}}
\newcommand{\KKp}{HP}

%Modifikation der Variablen
\renewcommand{\phi}[0]{\varphi}
\newcommand{\eps}[0]{\varepsilon}

%zusätzliche Features
\newcommand{\id}[0]{\operatorname{id}}		% Identität
\renewcommand{\sp}[0]{\operatorname{Sp}}		% Spektrum eines Elements
\newcommand{\eins}[0]{\mathbf{1}}			% Eine Eins in allgemeinerem Kontext, z.B. in einem Ring
\newcommand{\diag}[0]{\operatorname{diag}}
\newcommand{\ad}[0]{\operatorname{Ad}}
\newcommand{\ev}[0]{\operatorname{ev}}
\newcommand{\fin}[0]{{\subset\!\!\!\subset}}
\newcommand{\Aut}[0]{\operatorname{Aut}}
\newcommand{\dimrok}[0]{\dim_{\mathrm{Rok}}}
\newcommand{\dst}[0]{\displaystyle}
\newcommand{\cstar}[0]{\ensuremath{\mathrm{C}^*}}
\newcommand{\dist}[0]{\operatorname{dist}}
\newcommand{\ann}[0]{\operatorname{Ann}}
\newcommand{\cc}[0]{\simeq_{\mathrm{cc}}}
\newcommand{\scc}[0]{\simeq_{\mathrm{scc}}}
\newcommand{\vscc}[0]{\simeq_{\mathrm{vscc}}}
\newcommand{\scd}[0]{\preceq_{\mathrm{scd}}}
\newcommand{\ue}[0]{{~\approx_{\mathrm{u}}}~}
\newcommand{\cel}[0]{\ensuremath{\mathrm{cel}}}
\newcommand{\acel}[0]{\ensuremath{\mathrm{acel}}}
\newcommand{\sacel}[0]{\ensuremath{\mathrm{sacel}}}
\newcommand{\prim}[0]{\ensuremath{\mathrm{Prim}}}
\newcommand{\co}[0]{\ensuremath{\mathrm{co}}}
\newcommand{\GL}[0]{\operatorname{GL}}
\newcommand{\Bott}[0]{\ensuremath{\mathrm{Bott}}}
\newcommand{\tK}[0]{\ensuremath{\underline{K}}}
\newcommand{\Hom}[0]{\operatorname{Hom}}
\newcommand{\sep}[0]{\ensuremath{\mathrm{sep}}}
\newcommand{\nue}[0]{\approx_{\mathrm{n.u}}}
\newcommand{\wc}[0]{\preccurlyeq}
\renewcommand{\asymp}[0]{\sim_{\mathrm{asymp}}}

\newcommand{\greater}[0]{>}
\renewcommand{\smaller}[0]{<}
\newcommand{\vslash}[0]{|}
\newcommand{\msout}[1]{\text{\sout{$#1$}}}

% theorems
\newtheorem{satz}{Satz}[section]		% <--- optional, zählt so mit den Abschnitten

\newaliascnt{corCT}{satz}
\newtheorem{cor}[corCT]{Corollary}
\aliascntresetthe{corCT}
\providecommand*{\corCTautorefname}{Corollary}
\newaliascnt{lemmaCT}{satz}
\newtheorem{lemma}[lemmaCT]{Lemma}
\aliascntresetthe{lemmaCT}
\providecommand*{\lemmaCTautorefname}{Lemma}
\newaliascnt{propCT}{satz}
\newtheorem{prop}[propCT]{Proposition}
\aliascntresetthe{propCT}
\providecommand*{\propCTautorefname}{Proposition}
\newaliascnt{theoremCT}{satz}
\newtheorem{theorem}[theoremCT]{Theorem}
\aliascntresetthe{theoremCT}
\providecommand*{\theoremCTautorefname}{Theorem}
\newtheorem*{theoreme}{Theorem}

\theoremstyle{definition}

\newaliascnt{conjectureCT}{satz}
\newtheorem{conjecture}[conjectureCT]{Conjecture}
\aliascntresetthe{conjectureCT}
\providecommand*{\conjectureCTautorefname}{Conjecture}
\newaliascnt{defiCT}{satz}
\newtheorem{defi}[defiCT]{Definition}
\aliascntresetthe{defiCT}
\providecommand*{\defiCTautorefname}{Definition}
\newtheorem*{defie}{Definition}
\newaliascnt{notaCT}{satz}
\newtheorem{nota}[notaCT]{Notation}
\aliascntresetthe{notaCT}
\providecommand*{\notaCTautorefname}{Notation}
\newtheorem*{notae}{Notation}
\newaliascnt{remCT}{satz}
\newtheorem{rem}[remCT]{Remark}
\aliascntresetthe{remCT}
\providecommand*{\remCTautorefname}{Remark}
\newtheorem*{reme}{Remark}
\newaliascnt{exampleCT}{satz}
\newtheorem{example}[exampleCT]{Example}
\aliascntresetthe{exampleCT}
\providecommand*{\exampleCTautorefname}{Example}
\newaliascnt{questionCT}{satz}
\newtheorem{question}[questionCT]{Question}
\aliascntresetthe{questionCT}
\providecommand*{\questionCTautorefname}{Question}
\newtheorem*{questione}{Question}

\newcounter{theoremintro}
\renewcommand*{\thetheoremintro}{\Alph{theoremintro}}
\newaliascnt{theoremiCT}{theoremintro}
\newtheorem{theoremi}[theoremiCT]{Theorem}
\aliascntresetthe{theoremiCT}
\providecommand*{\theoremiCTautorefname}{Theorem}
\newaliascnt{defiiCT}{theoremintro}
\newtheorem{defii}[defiiCT]{Definition}
\aliascntresetthe{defiiCT}
\providecommand*{\defiiCTautorefname}{Definition}
\newaliascnt{coriCT}{theoremintro}
\newtheorem{cori}[coriCT]{Corollary}
\aliascntresetthe{coriCT}
\providecommand*{\coriCTautorefname}{Corollary}

%%%%%%%%%%%%%%%%%%%%%%%%%%%%%%%%%%%%%%%%%%%%

\begin{abstract} 
This paper examines and strengthens the Cuntz--Thomsen picture of equivariant Kasparov theory for arbitrary second-countable locally compact groups, in which elements are given by certain pairs of cocycle representations between \cstar-dynamical systems.
The main result is a stable uniqueness theorem that generalizes a fundamental characterization of ordinary $KK$-theory by Lin and Dadarlat--Eilers.
Along the way, we prove an equivariant Cuntz--Thomsen picture analog of the fact that the equivalence relation of homotopy agrees with the (a priori stronger) equivalence relation of stable operator homotopy.
The results proved in this paper will be employed as the technical centerpiece in forthcoming work of the authors to classify certain amenable group actions on Kirchberg algebras by equivariant Kasparov theory.
\end{abstract}

\maketitle

\setcounter{tocdepth}{2}

\tableofcontents

%%%%%%%%%%%%%%%%%%%%%%%%%%%

\section*{Introduction}

The operator algebraic perspective on $K$-theory has been responsible for a long-standing and fruitful exchange of ideas and results between the areas of topology and \cstar-algebras.
The extension of topological $K$-theory from spaces to \cstar-algebras has not only opened up a fresh perspective on the subject, but led to numerous new insights in the original domain of origin, many of which are so well-known that we won't attempt to survey them.
In what may arguably be considered the most powerful unifying $K$-theoretical theory enabled by the noncommutative framework, Kasparov's $KK$-theory \cite{Kasparov88} is a generalized homotopy theory for $C^\ast$-algebras connecting $K$-theory, $K$-homology, and Brown-Douglas-Fillmore theory \cite{BrownDouglasFillmore77}.
The original approach by Kasparov, commonly referred to as the Kasparov picture, is still the predominant way of treating this subject.
While this is entirely appropriate for the original historical motivation and applications, subsequent uses of $KK$-theory sometimes call for a different (though equivalent) approach that is better suited for certain other applications.

The description of $KK$-theory due to Cuntz \cite{Cuntz83}, commonly referred to as the Cuntz picture, has had many applications such as in cyclic cohomology \cite{ConnesCuntz88} or Higson's celebrated description of $KK$ as a universal functor \cite{Higson87}.
In said picture, elements of Kasparov's group $KK(A,B)$ for separable \cstar-algebras $A$ and $B$ can be described as homotopy classes of so-called Cuntz pairs, i.e., pairs of $\ast$-homomorphisms $\phi, \psi \colon A \to \CM(B \otimes \CK)$ such that $\phi(a)-\psi(a)\in B\otimes \CK$ for all $a\in A$.
A groundbreaking theorem in $KK$-theory is the so-called stable uniqueness theorem proved independently by Lin \cite{Lin02} and Dadarlat--Eilers \cite{DadarlatEilers01, DadarlatEilers02}.
It states that if a Cuntz pair $(\phi, \psi)$ represents the zero element in $KK(A,B)$, then $\phi$ and $\psi$ are homotopic via a unitary path in $\eins+B\otimes \CK$ after stabilizing both $\phi$ and $\psi$ with a suitable $\ast$-homomorphism.
(This is a tiny oversimplification. It captures how we reprove the original theorem as a special case of ours, but the former is actually a bit weaker.
This boils down to the difference between \emph{proper} and \emph{strong} asymptotic unitary equivalence in \autoref{def:properasym}.)

While the stable uniqueness theorem was originally motivated as a tool for classification of nuclear \cstar-algebras \cite{Lin02, DadarlatEilers02}, the past decade has found other groundbreaking utilizations of the theorem, such as the Quasidiagonality Theorem of Tikuisis--White--Winter \cite{TikuisisWhiteWinter17} and Schafhauser's AF embeddability theorem \cite{Schafhauser20}.
It is worth pointing out explicitly that the celebrated Kirchberg--Phillips theorem \cite{Kirchberg94, Phillips00}, which classifies separable simple nuclear purely infinite \cstar-algebras via $KK$-theory, is known to admit a particularly slick proof with the help of the stable uniqueness theorem; see \cite{Gabe24} for the most recent one due to the first named author.

With the classification of simple nuclear well-behaved \cstar-algebras being essentially complete due to the work of many hands --- an incomplete list of references being \cite{Kirchberg94, Phillips00, GongLinNiu20, GongLinNiu20_2, ElliottGongLinNiu15, TikuisisWhiteWinter17, ElliottGongLinNiu20, GongLin20, GongLin22, CGSTW23} --- a natural future research direction is the classification of \cstar-dynamical systems.
In the most general sense, this concerns \cstar-algebras equipped with a continuous action of a locally compact group.
At least for special choices of the acting group (such as finite groups or the integers), classification results in this vein have been investigated since the 1980s.
After some preliminary prototype arguments by Herman--Jones \cite{HermanJones82} and Herman--Ocneanu \cite{HermanOcneanu84} inspired by work of Connes, the concept of the Rokhlin property was fleshed out and exploited for classification by Kishimoto \cite{Kishimoto95, Kishimoto98}, Izumi \cite{Izumi04}, and many others.
By now these related techniques have been applied many times and also extended to actions of compact groups \cite{HirshbergWinter07, Gardella19, Gardella21}, actions of $\IR$ \cite{Kishimoto96_R, Szabo21R}, and more complicated infinite discrete groups \cite{Izumi10}.
Generally speaking, classification via a Rokhlin-type property dominates the bulk of the available literature when it comes to methodology, but often comes at the cost of restrictions on the actions that one ends up classifying.
It would appear that the latter drawback is the least pronounced for discrete groups that are in a suitable way built out of the integer group $\IZ$.
This led to the recent breakthrough of Izumi--Matui \cite{IzumiMatui20, IzumiMatui21, IzumiMatui21_2} who managed to classify outer actions of poly-$\IZ$ groups on Kirchberg algebras with the help of invariants that not only seem to resemble $K$-theory, but were later argued by Meyer \cite{Meyer21} to genuinely amount to equivariant $KK$-theory.

The fact that $KK$-theory is extendible to \cstar-dynamics instead of just \cstar-algebras is not a mere aesthetic coincidence, but was the key to the impactful applications of Kasparov's ideas to classically motivated problems such as the Novikov conjecture.
In complete analogy to the Kirchberg--Phillips theorem mentioned above, Phillips has hypothesized in the past that outer actions of finite groups on Kirchberg algebras ought to be classified by equivariant $KK$-theory.
Various authors have subsequently speculated (albeit with little trace in the published literature) that this ought to be true for actions of countable amenable groups.
Some evidence for this was provided in \cite{Szabo18kp}, which was recently generalized by Suzuki \cite{Suzuki21}, indicating that there may even be hope to classify amenable actions of not necessarily amenable groups.
In contrast to the methodology present in the aforementioned Elliott program, however, it has in large part remained a mystery how the information encoded in equivariant $KK$-theory can be systematically employed towards the classification of actions.
The impressive recent work of Izumi--Matui mentioned above, for example, uses bundle-like invariants that could only in hindsight be interpreted in $K$-theoretical language (for groups without torsion) as a result of the categorical insights related to the Baum--Connes conjecture \cite{MeyerNest06}.
The intention behind this article is to build a systematic machinery to utilize equivariant $KK$-theory towards the classification of \cstar-dynamics.
We expect and hope for it to lead to a paradigm shift in the direction of research that concerns dynamical generalizations of the Elliott classification program.

Since the methodology of Izumi--Matui is closely tied to the Rokhlin property and the Evans--Kishimoto intertwining argument \cite{EvansKishimoto97}, it is increasingly difficult to implement for more complicated groups, and in fact is too restrictive for groups with torsion.
Recently, the second named author proposed a categorical framework for \cstar-dynamics \cite{Szabo21cc} to classify group actions up to cocycle conjugacy based on an Elliott intertwining argument that is intended to provide an alternative to the one of Evans--Kishimoto.
In this framework, arrows between actions of \cstar-algebras are so-called cocycle morphisms.
In analogy to ordinary \cstar-algebra classification, it is an important intermediate step to solve the \emph{uniqueness problem}, i.e., to determine in terms of classifying invariants when two cocycle morphisms are \emph{approximately/asymptotically unitarily equivalent}; for more details see \cite{Szabo21cc}.
Since it was also argued that equivariant $KK$-theory can be viewed as a (bi-)functor on this enlarged category, it is natural expect that it ought to represent one of the key obstructions to solving said existence/uniqueness problem.

In \cite{Thomsen98}, Thomsen showed that equivariant $KK$-theory can be expressed using Cuntz pairs if one considers pairs of cocycle representations; we will henceforth refer to this framework as the Cuntz--Thomsen picture. 
That is, instead of considering equivariant $\ast$-homomorphisms, one considers $\ast$-homomorphisms $A\to \CM(B)$ that are equivariant after twisting the action on $B$ with distinguished cocycles.
In this picture, Thomsen proved that equivariant $KK$-theory can be described by a Higson-type universal property.
The aim of this paper is to prove the following stable uniqueness theorem for equivariant $KK$-theory. 
For the most general version, see \autoref{thm:main}.

\begin{theoreme} 
Let $A$ and $B$ be separable \cstar-algebras, and let $G$ be a second-countable locally compact group. 
Let $\alpha: G\curvearrowright A$ and $\beta: G\curvearrowright B$ be two continuous actions.
Let
\[
(\phi,\Iu), (\psi,\Iv): (A,\alpha)\to (\CM(B\otimes\CK),\beta\otimes\id_\CK)
\]
be a pair of cocycle representations forming an anchored $(\alpha, \beta)$-Cuntz pair (see \autoref{def:anchored}).
Then $\big[ (\phi, \Iu), (\psi, \Iv)\big] = 0$ in $KK^G(\alpha, \beta)$ if and only if there exists a third cocycle representation $(\theta,\Iy)$ and a norm-continuous path $u\colon [0,\infty) \to \mathcal U(\eins+M_2(B \otimes \CK))$ with $u_0 = \eins$ such that
\[
\lim_{t\to \infty} \left\| \begin{psmallmatrix} \psi(a) & 0 \\ 0 & \theta(a) \end{psmallmatrix} - u_t \begin{psmallmatrix} \phi(a) & 0 \\ 0 & \theta(a) \end{psmallmatrix} u_t^\ast \right\| = 0 
\]
for all $a\in A$ and
\[
\lim_{t\to \infty} \max_{g\in K} \left\| \begin{psmallmatrix} \Iv_g & 0 \\ 0 & \Iy_g \end{psmallmatrix} - u_t \begin{psmallmatrix} \Iu_g & 0 \\ 0 & \Iy_g \end{psmallmatrix} (\beta \otimes \id_\CK)_g^{(2)} (u_t)^\ast \right\| =0
\]
for all compact sets $K \subseteq G$.
\end{theoreme}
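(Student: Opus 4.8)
The plan is to treat the two implications separately, with the backward direction being routine and the forward direction carrying the weight. I would lean on the additive structure of $KK^G(\alpha,\beta)$ in the Cuntz--Thomsen picture and on two facts that I take to be in place by this point: that the class of an anchored $(\alpha,\beta)$-Cuntz pair is unchanged under asymptotic cocycle unitary equivalence implemented by a path of unitaries in the unitisation of the defining ideal, and the equivariant analogue of ``homotopy $=$ stable operator homotopy'' for anchored Cuntz pairs. Given $(\theta,\Iy)$ and the path $u$ as in the statement, the direct sum $\big((\phi,\Iu)\oplus(\theta,\Iy),(\psi,\Iv)\oplus(\theta,\Iy)\big)$ is again an anchored $(\alpha,\beta)$-Cuntz pair, and its class equals $[(\phi,\Iu),(\psi,\Iv)]$ since the degenerate pair $\big((\theta,\Iy),(\theta,\Iy)\big)$ represents $0$; its two legs are asymptotically cocycle unitarily equivalent along $u$ --- which is precisely the content of the two displayed limits --- so by the invariance recalled above the pair represents $0$, giving $[(\phi,\Iu),(\psi,\Iv)]=0$.

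For the converse, assume the class vanishes. By the stable operator homotopy result I would first produce a cocycle representation $(\theta_0,\Iy_0)$, realised as an infinite ampliation of a sufficiently large cocycle representation, such that after adjoining $(\theta_0,\Iy_0)$ to both legs the anchored Cuntz pair is connected by a norm-continuous path of anchored $(\alpha,\beta)$-Cuntz pairs to a degenerate one. The decisive ingredient is then an equivariant absorption theorem of Weyl--von Neumann type (in the form due to Kasparov and Thomsen): once $(\theta_0,\Iy_0)$ is large enough, every cocycle representation occurring along the path is absorbed by it, and consequently two sufficiently close anchored Cuntz pairs in this stabilised situation are conjugate by a unitary in $\eins+M_2(B\otimes\CK)$ as close to $\eins$ as one wishes, with the cocycle defect controllable on any prescribed compact subset of $G$ and to any prescribed tolerance. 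Granting this, I would discretise the operator homotopy finely, extract near-identity connecting unitaries $w_i\in\eins+M_2(B\otimes\CK)$ from the absorption principle, interpolate within short unitary paths near $\eins$, and concatenate with a summable error budget pushed into the tail after reparametrising $[0,1]$ by $[0,\infty)$; this yields a single norm-continuous $u$ with $u_0=\eins$ that carries $\phi\oplus\theta_0$ into $\psi\oplus\theta_0$ and $\Iu\oplus\Iy_0$ into $\Iv\oplus\Iy_0$ in the asymptotic, uniform-on-compacta sense required, so $(\theta,\Iy)=(\theta_0,\Iy_0)$ (or a further ampliation) does the job. The uniformity over compact $K\subseteq G$ in the second limit is maintained step by step using the continuity of $\beta$ and of the cocycles.

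The hard part is the equivariant absorption-and-uniqueness input that powers the discretisation. In the non-equivariant Dadarlat--Eilers argument the near-identity connecting unitaries are manufactured from quasicentral approximate units via a Kasparov-technical-theorem construction; here one must in addition arrange those approximate units to be almost invariant under $\beta$ in a way that is uniform over compact subsets of $G$ --- pointwise-in-$G$ reasoning being unavailable, as $G$ is only assumed second-countable locally compact --- and one must ensure that a single unitary controls the $\ast$-homomorphism part and the cocycle part simultaneously, with errors that remain summable under concatenation. Organising this bookkeeping so that both limits hold for every $a\in A$ and every compact $K\subseteq G$ at once is the technical crux of the theorem.
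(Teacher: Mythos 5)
Your overall architecture tracks the paper's: a routine backward direction (this matches \autoref{lem:pasu}), and a forward direction that first upgrades the vanishing of the $KK^G$-class to a stable operator homotopy (this matches \autoref{thm:homotopy-implies-operator-homotopy}) and then converts the $\CM(B\otimes\CK)$-valued unitary path supplied by the operator homotopy into a norm-continuous path in $\CU(\eins+M_2(B\otimes\CK))$ starting at $\eins$. Both of those first two pieces are fine. The gap is in the ``decisive ingredient'' you name for the conversion step: you appeal to an equivariant absorption theorem to manufacture near-identity connecting unitaries in $\eins + M_2(B\otimes\CK)$ between close points of the operator homotopy, but absorption cannot deliver this. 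The absorption theorem (\autoref{thm:abs-rep-existence}, \autoref{cor:coc-rep-absorption}) produces the relation $(\phi,\Iu)\oplus(\theta_0,\Iy_0)\asymp(\theta_0,\Iy_0)$, and $\asymp$ is witnessed by a norm-continuous path of unitaries in $\CU(\CM(B\otimes\CK))$, \emph{not} in $\CU(\eins+B\otimes\CK)$; the whole difficulty at this stage is precisely to pass from multiplier unitaries to unitaries in $\eins+B\otimes\CK$, and absorption gives no traction on that. Moreover, the auxiliary claim you invoke -- that two sufficiently close anchored Cuntz pairs in the stabilised picture are conjugate by a unitary in $\eins+M_2(B\otimes\CK)$ arbitrarily near $\eins$, with controllable cocycle defect -- is itself a local uniqueness statement at least as hard as the step you want to use it for, so the argument as sketched is circular.

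What actually powers this step in the paper is direct and avoids both absorption and the derivation-theoretic argument of Dadarlat--Eilers. In \autoref{lem:approximate-multipliers-1}: given a path $u_t\in\CU(\CM^\beta(B))$ with $u_0=\eins$, $[u_t,D]\subseteq B$ and $\max_t\|u_t-\eins\|<2$, one takes a self-adjoint logarithm $a_t$ of $u_t$ and compresses by a quasicentral, $\beta$-almost-invariant approximate unit $(h_t)$ from \autoref{lem:Kasparov}, producing $v_t=\exp(ih_ta_th_t)\in\CU(\eins+B)$ such that $v_t^*u_t$ asymptotically centralizes $D$, is asymptotically $\beta$-invariant, and asymptotically acts trivially on $B$. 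In \autoref{lem:approximate-multipliers}, a general path is handled by partitioning it into pieces of norm-diameter below $2$, treating each piece by the previous lemma, and recombining the telescoping products with a summability bookkeeping; \autoref{cor:asymptotic-unitary-equivalence} then yields the strong asymptotic unitary equivalence by replacing $U_t$ by $v_t$. So there is a discretisation, but its sole purpose is to make the logarithm trick applicable -- it is not a discretisation invoking a uniqueness principle at each node. You already identify in your last paragraph that quasicentral, $\beta$-almost-invariant approximate units are the crux; the fix is to use them directly via functional calculus, rather than route through absorption. (One small further correction: the Kasparov technical theorem, \autoref{thm:Kasparov}, enters in the stable operator homotopy step via the KKarantine lemma, while the approximate-unit construction here uses the simpler \autoref{lem:Kasparov}. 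And absorption is used in the paper, but only for the ``fixed absorbing $(\theta,\Iy)$'' variant \autoref{thm:main}\ref{thm:main:5}, not for the ``there exists $(\theta,\Iy)$'' statement you are asked to prove.)
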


In a follow-up paper \cite{GabeSzabo24kp}, the above main result is employed to prove a dynamical Kirchberg--Phillips theorem, i.e., a classification theorem for certain amenable group actions on Kirchberg algebras via equivariant Kasparov theory.
In the special case of discrete groups, this classifies all amenable and outer actions, which completely settles an important open problem mentioned above.

Besides this intended application, we wish to emphasize some novel aspects in our approach towards the theorem, which we believe might make this article appealing even for those readers who are only interested in the known (non-dynamical) stable uniqueness theorem of Lin and Dadarlat--Eilers.
If we were to summarize it in an oversimplified slogan, it would be that upon direct comparison, we employ certain tricks related to quasicentral approximate units in order to overcome all of the truly substantial or deep technical obstacles that the known proof in the literature was dealing with.
For those readers familiar with the original proof, we point out that our approach is self-contained in the Cuntz--Thomsen picture, without a need to translate certain problems into the Kasparov picture or Fredholm picture (\cite{Higson87}) of $KK$-theory to solve them.
Furthermore, we found a more direct alternative to the part about automorphisms of \cstar-algebras that are norm-homotopic to the identity, which was utilizing the theory of derivations in a key way.

The article is organized as follows.
In the first preliminary section, we introduce basic notation, recall needed definitions or arguments from the literature, and introduce equivariant $KK$-theory in the Cuntz--Thomsen picture.
In the second section, we introduce a Cuntz--Thomsen analog of the equivalence relation of operator homotopy.
A pair of cocycle representations $(\phi,\Iu), (\psi,\Iv): (A,\alpha)\to (\CM(B\otimes\CK),\beta\otimes\id_\CK)$ is said to be operator homotopic, if it is possible to find a continuous path of unitaries $\set{u_t}_{0\leq t\leq 1}$ with $u_0=\eins$ and $\ad(u_1)\circ(\phi,\Iu)=(\psi,\Iv)$, and such that $\ad(u_t)\circ(\phi,\Iu)$ forms a Cuntz pair with $(\psi,\Iv)$ for all $t\in [0,1]$.
This entails that the two cocycle representations had to form a Cuntz pair to begin with, and that their associated $KK$-class vanishes.
Although this is in general a strict implication, we show as our main result in the second section (\autoref{thm:homotopy-implies-operator-homotopy}) that anchored Cuntz pairs with vanishing $KK$-class can always be arranged to become operator homotopic after stabilizing with a suitably chosen cocycle representation. 
In the third section, we investigate sufficient conditions to determine when cocycle representations absorb each other in the spirit of \cite{Kasparov80} and Voiculescu's theorem \cite{Voiculescu76}.
We introduce the notion of weak containment between cocycle representations, which unifies weak containment of unitary representations with certain known subequivalence relations for $*$-homomorphisms of \cstar-algebras.
The most appealing feature of this concept is that a cocycle representation $(\phi,\Iu)$ turns out to weakly contain another cocycle representation $(\psi,\Iv)$ precisely when the infinite repeat $(\phi^\infty,\Iu^\infty)$ absorbs the infinite repeat $(\psi^\infty,\Iv^\infty)$ (see \autoref{def:inf-repeat}).
Building on this fact, we give a new elementary proof of the fact that for separable $A$ and $B$, it is always possible to find a cocycle representation $(\theta,\Iy)$ that absorbs every other one; see \autoref{thm:abs-rep-existence}.
This generalizes various similar results from the literature \cite{Thomsen01, Thomsen05, GabeRuiz15}.
Our proof has the advantage that it is easily adaptable in a more general context, as well as being based on a simplified argument that skips the previously common part about Kasparov--Stinespring dilations of completely positive maps.
In the fourth section, we investigate criteria on pairs of cocycle representations to be strongly asymptotically unitarily equivalent.
Although a priori much weaker, we prove that it is sufficient to ask for the asymptotic unitary equivalence to be implemented by certain paths of unitaries in the multiplier algebra $\CM(B)$ instead of $\eins+B$.
In comparison to the previously known proof in the non-dynamical setting that utilized the theory of derivations in this step, our proof combines a functional calculus argument with the existence of quasicentral approximate units inside $B$.
As a consequence of this observation, we deduce as the main result of the fourth section (\autoref{cor:op-hom-implies-saue}) that operator homotopic cocycle representations are strongly asymptotically unitarily equivalent.
In the fifth and final section, we combine the main results of the other sections and deduce our final main result \autoref{thm:main}.
\smallskip
%%%

\textbf{Acknowledgements.} 
The first named author has been supported by the Independent Research Fund Denmark through the Sapere Aude:\ DFF-Starting Grant 1054-00094B.
The second named author has been supported by the start-up grant STG/18/019 of KU Leuven, the research project C14/19/088 funded by the research council of KU Leuven, and the project G085020N funded by the Research Foundation Flanders (FWO).

We would like to express our gratitude to the anonymous referees for many helpful comments and suggestions that improved the article.

%%%%%%%%%%%%%%%%%%%%%%%%%%%%%%%%%%%%%%%%%%%%%%%%%%%%%

\section{Preliminaries}

\begin{nota}
Throughout, $G$ will denote a second-countable, locally compact group unless specified otherwise.
Normal capital letters like $A,B,C$ will denote generic \cstar-algebras. 
The multiplier algebra of $A$ is denoted as $\CM(A)$, whereas $A^\dagger$ denotes the proper unitization of $A$, i.e., one adds a new unit even if $A$ was unital.
For a unital \cstar-algebra $A$, we write $\CU(A)$ for its unitary group and $\CU_0(A)$ for the connected component of the unit element inside the unitary group.
For a (not necessarily unital) \cstar-algebra, we write $\CU(\eins+A)$ for the set of all unitaries in $A^\dagger$ whose scalar part is $1$, which can be canonically identified with $\CU(A)$ if $A$ was already unital.
Throughout the article, the symbol $\CK$ denotes the \cstar-algebra of compact operators on a separable infinite-dimensional Hilbert space.
If a particular statement calls for specifying the Hilbert space $\CH$, we write $\CK(\CH)$.
Greek letters such as $\alpha,\beta,\gamma$ are used for point-norm continuous maps $G\to\Aut(A)$, in particular for point-norm continuous $G$-actions.
In this case we use the same symbol $\alpha$ for the induced action $\alpha: G\to\Aut(\CM(A))$, which is point-strictly continuous, but may in general fail to be point-norm continuous.
Depending on the situation, we may denote $\id_A$ either for the identity map on $A$ or the trivial $G$-action on $A$.
We will denote by $A^\alpha$ or $\CM(A)^\alpha$ the \cstar-subalgebra of fixed points (in $A$ or $\CM(A)$) with respect to $\alpha$.
Normal alphabetical letters such as $u,v,U,V$ are used for unitary elements in some \cstar-algebra $A$.
If either $u\in\CU(\CM(A))$ or $u\in\CU(\eins+A)$, we denote by $\ad(u)$ the induced inner automorphism of $A$ given by $a\mapsto uau^*$.
Double-struck letters such as $\Iu, \Iv, \IU, \IV$ are used for strictly continuous maps $G\to\CU(\CM(A))$.
Most of the time they will be assumed to be (1-)cocycles with respect to an action $\alpha: G\curvearrowright A$, which for the map $\Iu$ would mean that it satisfies the cocycle identity $\Iu_{gh}=\Iu_g\alpha_g(\Iu_h)$ for all $g,h\in G$.
Under this assumption, one obtains a new (cocycle perturbed) action $\alpha^\Iu: G\curvearrowright A$ via $\alpha^\Iu_g=\ad(\Iu_g)\circ\alpha_g$.
\end{nota}

\begin{defi}[see {\cite[Section 1]{Szabo21cc}}]
Let $\alpha: G\curvearrowright A$ and $\beta: G\curvearrowright B$ be two actions on \cstar-algebras.
\begin{enumerate}[label=\textup{(\roman*)}]
\item A \emph{cocycle representation} $(\phi,\Iu): (A,\alpha)\to (\CM(B),\beta)$ consists of a $*$-homomorphism $\phi: A\to\CM(B)$ and a strictly continuous $\beta$-cocycle $\Iu: G\to\CU(\CM(B))$ satisfying $\ad(\Iu_g)\circ\beta_g\circ\phi=\phi\circ\alpha_g$ for all $g\in G$.
\item If additionally $\phi(A)\subseteq B$, then the pair $(\phi,\Iu)$ is called a \emph{cocycle morphism}, and we denote $(\phi,\Iu): (A,\alpha)\to (B,\beta)$.
\end{enumerate}
In the situation above, if $\phi=0$ and $\Iu=\eins$, then $(\phi,\Iu)$ is called the zero representation, and on occasion we write $(0,\eins)=0$ where notationally convenient.
\end{defi}

\begin{nota}
Let us say that an action $\beta: G\curvearrowright B$ on a \cstar-algebra is \emph{strongly stable} if $(B,\beta)$ is (genuinely) conjugate to $(B\otimes\CK,\beta\otimes\id_\CK)$.
\end{nota}

\begin{rem}
We will repeatedly and without mention use the fact that an action $\beta: G\curvearrowright B$ is strongly stable if and only if there is a sequence of isometries $r_n\in\CM(B)^\beta$ such that $\eins=\sum_{n=1}^\infty r_nr_n^*$ in the strict topology.
The ``only if'' part is clear since such a sequence can be obtained through an inclusion $\CB(\ell^2(\IN))\cong\CM(\CK)\subseteq\CM(B\otimes\CK)^{\beta\otimes\id_\CK}$.
For the ``if'' part, one realizes that if $\{e_{k,\ell}\}_{k,\ell\geq 1}$ is a set of matrix units generating the compacts $\CK$, then
\[
B\otimes\CK\to B,\quad b\otimes e_{k,\ell}\mapsto r_kbr_\ell^*
\]
defines an isomorphism that is equivariant with respect to $\beta\otimes\id_\CK$ and $\beta$.
\end{rem}

We shall now recall some necessary background on equivariant $KK$-theory.
Throughout the paper the focus lies on the Cuntz--Thomsen picture \cite{Cuntz83, Cuntz84, Higson87, Thomsen98} rather than Kasparov's original picture \cite{Kasparov88}.

\begin{defi}[cf.\ {\cite[Section 3]{Thomsen98}}] \label{def:equi-Cuntz-pair}
Let $\alpha: G\curvearrowright A$ and $\beta: G\curvearrowright B$ be two actions on  \cstar-algebras where $A$ is separable and $B$ is $\sigma$-unital.
An $(\alpha,\beta)$-\emph{Cuntz pair} is a pair of cocycle representations
\[
(\phi,\Iu), (\psi,\Iv): (A,\alpha) \to (\CM(B\otimes\CK),\beta\otimes\id_\CK),
\]
such that the pointwise differences $\phi-\psi$ and $\Iu-\Iv$ take values in $B\otimes\CK$.
(In Thomsen's work it was also assumed that the map $\Iu-\Iv$ is norm-continuous. This turns out to be redundant, see \cite[Proposition 6.9]{Szabo21cc}.)
If $\beta$ is assumed to be strongly stable, then we also allow $(\CM(B),\beta)$ as the codomain of an $(\alpha,\beta)$-Cuntz pair for notational convenience.
\end{defi}

\begin{defi}[cf.\ {\cite[Lemma 3.4]{Thomsen98}}]
Let $\beta: G\curvearrowright B$ be an action on a \cstar-algebra.
Suppose that there exists a unital inclusion $\CO_2\subseteq\CM(B)^\beta$.
For two isometries $t_1,t_2\in\CM(B)^\beta$ with $t_1t_1^*+t_2t_2^*=\eins$, we may consider the $\beta$-equivariant $*$-homomorphism
\[
\CM(B)\oplus\CM(B)\to\CM(B),\quad b_1\oplus b_2\mapsto b_1\oplus_{t_1,t_2} b_2 := t_1b_1t_1^*+t_2b_2t_2^*.
\]
Up to unitary equivalence with a unitary in $\CM(B)^\beta$, this $*$-homomorphism does not depend on the choice of $t_1$ and $t_2$:
If $v_1, v_2\in\CM(B)^\beta$ are two other isometries with $v_1v_1^*+v_2v_2^*=\eins$, then the unitary equivalence between ``$\oplus_{t_1,t_2}$'' and ``$\oplus_{v_1,v_2}$'' is implemented by $w=t_1v_1^*+t_2v_2^*\in\CM(B)^\beta$.
One refers to the element $b_1\oplus_{t_1,t_2} b_2$ as the \emph{Cuntz sum} of the two elements $b_1$ and $b_2$ (with respect to $t_1$ and $t_2$).

Now let $\alpha: G\curvearrowright A$ be another action on a \cstar-algebra, and $(\phi,\Iu), (\psi,\Iv): (A,\alpha)\to (\CM(B),\beta)$ two cocycle representations.
We likewise define the (pointwise) Cuntz sum
\[
(\phi,\Iu)\oplus_{t_1,t_2} (\psi,\Iv) = (\phi\oplus_{t_1,t_2}\psi, \Iu\oplus_{t_1,t_2}\Iv): (A,\alpha)\to (\CM(B),\beta),
\]
which is easily seen to be another cocycle representation.
Since its unitary equivalence class does not depend on the choice of $t_1$ and $t_2$, we will often omit $t_1$ and $t_2$ from the notation if it is clear from context that a given statement is invariant under said equivalence.
\end{defi}

\begin{nota}
Given a \cstar-algebra $B$, we denote $B[0,1]=\CC[0,1]\otimes B$.
If one has an action $\beta: G\curvearrowright B$, we consider the obvious $G$-action on $B[0,1]$ given by $\beta[0,1]=\id_{\CC[0,1]}\otimes\beta$.
\end{nota}

\begin{defi}[see {\cite[Section 3]{Thomsen98}}] \label{def:KKG-Thomsen}
Let $A$ be a separable \cstar-algebra and $B$ a $\sigma$-unital \cstar-algebra.
For two actions $\alpha: G\curvearrowright A$ and $\beta: G\curvearrowright B$,
let $\IE^G(\alpha,\beta)$ denote the set of all $(\alpha,\beta)$-Cuntz pairs, and let $\IDh^G(\alpha, \beta)$ denote the subset of all \emph{degenerate} $(\alpha, \beta)$-Cuntz pairs, i.e., those with $\phi=\psi$ and $\Iu = \Iv$.
A \emph{cocycle pair} is a Cuntz pair $\big( (\phi,\Iu), (\psi,\Iv) \big) \in \IE^G(\alpha, \beta)$ with $\phi=\psi=0$.
(We note that Thomsen refers to cocycle pairs as degenerate Cuntz pairs.
We have chosen to rename these, since this does not coincide with the usual notion. We will be working with a notion of degenerate Cuntz pairs that better resembles the usual notion for Kasparov modules. We point out that initial confusion about this matter led the second named author to give a slightly wrong definition of $KK^G$ in \cite[Section 6]{Szabo21cc}. Fortunately this mistake has no bearing on the validity of the results therein.)
We will slightly abuse notation and denote a cocycle pair of the form $\big( (0, \Iu), (0, \Iv)\big)$ by $(\Iu, \Iv)$. 
In the special case of having the trivial example $\big((\phi,\Iu), (\psi, \Iv)\big) = \big((0, \eins), (0, \eins)\big)$, we denote its associated degenerate Cuntz pair simply by the symbol $0$.

Two elements $x_0, x_1\in\IE^G(\alpha,\beta)$ are called \emph{homotopic} if there exists an $(\alpha,\beta[0,1])$-Cuntz pair in $\IE^G(\alpha,\beta[0,1])$ which restricts to $x_0$ upon evaluation at $0\in [0,1]$, and restricts to $x_1$ in $\IE^G(\alpha,\beta)$ upon evaluation at $1\in [0,1]$.
Let us for the moment write $x_0\sim_h x_1$.

For any unital inclusion $\CO_2\subseteq\CM(B\otimes\CK)^{\beta\otimes\id_\CK}$ with generating isometries $t_1,t_2$, one can perform the Cuntz addition for two $(\alpha,\beta)$-Cuntz pairs as
\[
\begin{array}{cl}
\multicolumn{2}{l}{
\big( (\phi^0,\Iu^0), (\psi^0,\Iv^0) \big) \oplus_{t_1,t_2} \big( (\phi^1,\Iu^1), (\psi^1,\Iv^1) \big) } \\
=& \big( (\phi^0,\Iu^0)\oplus_{t_1,t_2}(\phi^1,\Iu^1), (\psi^0,\Iv^0)\oplus_{t_1,t_2} (\psi^1,\Iv^1) \big).
\end{array}
\]
This is independent of the choice of $t_1,t_2$ up to homotopy; see \cite[Lemma 3.3]{Thomsen00}.
It was proved by Thomsen \cite[Theorem 3.5]{Thomsen98} that Kasparov's equivariant $KK$-group $KK^G(\alpha,\beta)$  is naturally isomorphic to $\IE^G(\alpha,\beta)/\!\!\sim$, whereby one has $x\sim y$ if there exist cocycle pairs $d_1,d_2\in\IE^G(\alpha,\beta)$ such that $x\oplus d_1\sim_h y\oplus d_2$. 
For an $(\alpha,\beta)$-Cuntz pair consisting of $(\phi,\Iu)$ and $(\psi,\Iv)$, we denote its associated equivalence class in $KK^G(\alpha,\beta)$ by $[(\phi,\Iu),(\psi,\Iv)]$.
\end{defi}

We will prove a cancellation result for certain Cuntz pairs that provides a slightly different picture of $KK^G$ (\autoref{prop:KKG-homotopy}) without having to stabilize with cocycle pairs as in Thomsen's description of $KK^G$.
For the readers unfamiliar with Cuntz's picture of $KK$-theory, we include some self-contained (though not too elaborate) arguments for the reader's convenience proving the abelian group structure for the classes of Cuntz pairs.

\begin{prop}[cf.\ {\cite[Lemma 1.3.6]{JensenThomsen}}] \label{prop:isometric-path}
There exist strictly continuous maps of isometries $S_1: [0,1)\to\CM(\CK)$ and $S_2: (0,1]\to\CM(\CK)$ satisfying $S_1^{(0)}=\eins$, $S_2^{(1)}=\eins$, and $S_1^{(t)}S_1^{(t)*}+S_2^{(t)}S_2^{(t)*}=\eins$ for all $t\in (0,1)$.
\end{prop}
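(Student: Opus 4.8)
The plan is to construct the two paths of isometries explicitly using a shift-type picture of $\CM(\CK)$. Fix an orthonormal basis $(e_n)_{n\geq 1}$ of the underlying Hilbert space $\CH$, so that $\CK = \CK(\CH)$ and $\CM(\CK) = \CB(\CH)$. Let $V \in \CB(\CH)$ be the unilateral shift $V e_n = e_{n+1}$, which is an isometry with $VV^* = \eins - e_{11}$ where $e_{11}$ is the rank-one projection onto $\IC e_1$. First I would treat the endpoints: I want $S_1^{(0)} = \eins$ and $S_2^{(1)} = \eins$, so the first path should start at the identity and the second should end at it, while for $t$ in the open interval the ranges of $S_1^{(t)}$ and $S_2^{(t)}$ must be complementary. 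A clean way to do this is to route both paths through the shift: let $S_1$ be a strictly continuous path of isometries from $\eins$ at $t=0$ to $V$ at (say) $t=1/2$, and then keep $S_1^{(t)} = V$ for $t \in [1/2, 1)$; symmetrically let $S_2^{(t)} = e_{11} \cdot(\text{something})$... — more precisely, once $S_1^{(t)} = V$ has range $\eins - e_{11}$, I can take $S_2^{(t)}$ to be the rank-one partial isometry picture of a unit vector in $\IC e_1$, i.e. the isometry $\CH \to \CH$ is not available since $e_1$ spans a one-dimensional space; so instead I must be more careful and split $\CH$ into two infinite-dimensional pieces.

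So the correct construction: choose a strictly continuous path of unitaries $W\colon [0,1] \to \CU(\CB(\CH))$ with $W^{(0)} = \eins$ that, at $t = 1$, is a unitary $W^{(1)}$ whose range decomposition exhibits $\CH \cong \CH \oplus \CH$; concretely, pick isometries $R_1, R_2 \in \CB(\CH)$ with $R_1 R_1^* + R_2 R_2^* = \eins$ and infinite-dimensional complementary ranges. Then set, for $t \in [0,1)$,
\[
S_1^{(t)} = \text{(a path of isometries from } \eins \text{ to } R_1\text{)},
\]
and for $t \in (0,1]$,
\[
S_2^{(t)} = \text{(a path of isometries from } R_2 \text{ to } \eins\text{)},
\]
arranged so that on the overlap $(0,1)$ the ranges stay complementary. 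The cleanest way to guarantee complementarity throughout: use a single strictly continuous path of unitaries $u\colon [0,1] \to \CU(\CB(\CH \oplus \CH))$ interpolating, via Kuiper's theorem (strict contractibility of the unitary group of $\CB$ of an infinite-dimensional separable Hilbert space, equivalently of $\CM(\CK)$), between $\eins_{\CH \oplus \CH}$ and the unitary $\CH \oplus \CH \to \CH$ realizing the isomorphism; then define $S_1^{(t)}$ and $S_2^{(t)}$ as $u^{(t)}$ restricted to the first and second copy of $\CH$ respectively, suitably reparametrized so that $S_1^{(0)} = \eins$ and $S_2^{(1)} = \eins$. Since restricting a unitary to a direct summand yields complementary isometries, the Cuntz relation $S_1^{(t)} S_1^{(t)*} + S_2^{(t)} S_2^{(t)*} = \eins$ holds for all $t$ in the common domain, and strict continuity is inherited from that of $u$.

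The main obstacle is bookkeeping rather than anything deep: one has to simultaneously (i) force the endpoint normalizations $S_1^{(0)} = \eins$, $S_2^{(1)} = \eins$, (ii) maintain the Cuntz relation on the open interval $(0,1)$, and (iii) ensure genuine strict continuity (not merely norm continuity) of paths into $\CM(\CK)$ — the latter being automatic here since all our paths are of the form $t \mapsto u^{(t)} R_i$ with $u$ norm-continuous, hence strictly continuous, and $R_i$ fixed. Since the cited reference \cite[Lemma 1.3.6]{JensenThomsen} establishes exactly such a pair of paths in the non-equivariant setting, I would ultimately just invoke it; the sketch above is how I would reconstruct it from scratch. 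Note that no equivariance is needed in this proposition — it is a purely functional-analytic statement about $\CM(\CK)$ — so Kuiper's theorem on the contractibility of $\CU(\CM(\CK))$ in the strict topology does all the work.
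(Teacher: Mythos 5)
There is a genuine gap, and it sits precisely in the step you dismiss as ``automatic''. You propose to build $S_1^{(t)}$ (and $S_2^{(t)}$) in the form $u^{(t)} R_i$ with $u$ a norm-continuous path of unitaries and $R_i$ a fixed isometry satisfying $R_1R_1^*+R_2R_2^*=\eins$. Such a path of isometries is norm-continuous, and along any norm-continuous path of isometries the cokernel projection $\eins - S^{(t)}S^{(t)*}$ varies norm-continuously, hence has constant rank (two projections at distance $<1$ are Murray--von Neumann equivalent). In particular no such path can connect $\eins$ (cokernel rank $0$) to $R_1$ (cokernel rank $\infty$). Concretely: $S_1^{(0)}=\eins$ with $S_1^{(t)}=u^{(t)}R_1$ forces $R_1 = u^{(0)*}$ to be unitary, so $R_2 R_2^* = 0$, a contradiction. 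The same obstruction rules out your doubled-space version, where $Wu^{(t)}\iota_1$ and $Wu^{(t)}\iota_2$ are the two restrictions of a unitary $\CH\oplus\CH\to\CH$ and therefore both always have infinite-dimensional cokernel, so neither can equal $\eins$; there is also the unrelated wrinkle that ``the unitary $\CH\oplus\CH\to\CH$'' is not an element of $\CU(\CB(\CH\oplus\CH))$, so Kuiper's theorem does not even apply as stated. More to the point, Kuiper (and its strict-topology analogues) gives contractibility of the \emph{unitary} group; it says nothing about deforming $\eins$ to a proper isometry, and that deformation cannot happen within the class of paths you allow.

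The proposition genuinely requires paths that are strictly continuous but \emph{not} norm-continuous, because the range projections $S_1^{(t)}S_1^{(t)*}$ must decrease strictly from $\eins$ to $0$ across the interval, and this is impossible for a norm-continuous path. The paper achieves this by identifying $\CM(\CK)$ with $\CB(L^2[0,1])$ and writing down explicit dilation/compression isometries $S_1^{(t)},S_2^{(t)}$ whose ranges are $L^2[0,1-t]$ and $L^2[1-t,1]$; the dependence on $t$ is strictly continuous (one checks this on compact operators and uses boundedness) while plainly failing to be norm-continuous. Your instinct to invoke \cite[Lemma 1.3.6]{JensenThomsen} as a black box is fine --- the statement is classical --- but the self-contained sketch you give is not a correct reconstruction of it, and the phenomenon you marked as bookkeeping (strict versus norm continuity) is in fact the entire content of the lemma.
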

\begin{proof}
After identifying $\CM(\CK)$ with the algebra of bounded operators on the Hilbert space $L^2[0,1]$ (with respect to the Lebesgue measure), one defines for all $\xi\in L^2[0,1]$ and $s\in [0,1]$
\[
S_1^{(t)}(\xi)(s)=\begin{cases} \frac{\xi((1-t)^{-1} s)}{\sqrt{1-t}} &,\quad s\leq 1-t \\ 0 &,\quad s>1-t, \end{cases} 
\]
and
\[
S_2^{(t)}(\xi)(s)= \begin{cases} \frac{\xi(t^{-1}(s+t-1))}{\sqrt{t}} &,\quad s\geq 1-t \\ 0&,\quad s<1-t. \end{cases}
\]
Then it is easy to check that these define isometries whenever $t\in [0,1]$ is so that either one of these operators is defined.
Evidently one has $S_1^{(0)}=\eins$ and $S_2^{(1)}=\eins$.
The range of $S_1^{(t)}$ is the subspace $L^2[0,1-t]$ for $t<1$ and the range of $S_2^{(t)}$ is the subspace $L^2[1-t,1]$ for $t>0$.
The claim follows.
\end{proof}

\begin{lemma} \label{lem:equivalence-simplified}
Let $A$ be a separable \cstar-algebra and $B$ a $\sigma$-unital \cstar-algebra.
Let $\alpha: G\curvearrowright A$ and $\beta: G\curvearrowright B$ be two actions.
\begin{enumerate}[label=\textup{(\roman*)},leftmargin=*]
\item For every $x\in \IE^G(\alpha, \beta)$ and $d\in \ID^G(\alpha, \beta)$, $x\oplus d$ is homotopic to $x$. \label{lem:equivalence-simplified:1}
\item The quotient $\IE^G(\alpha,\beta)/{\sim_h}$ is an abelian group under Cuntz addition.
The neutral element is represented by any element in $\ID^G(\alpha,\beta)$. \label{lem:equivalence-simplified:2}
\end{enumerate}
\end{lemma}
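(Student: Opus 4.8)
The plan is to prove (i) by writing down explicit homotopies built from the isometric path $S_1,S_2$ of \autoref{prop:isometric-path}, viewed inside the fixed-point algebra $\CM(\CK)\subseteq\CM(B\otimes\CK)^{\beta\otimes\id_\CK}$ (which in particular contains a unital $\CO_2$, so that Cuntz sums are available in the required equivariant form), and then to deduce (ii) by purely formal manipulations. Throughout I use that the Cuntz sum of $(\alpha,\beta)$-Cuntz pairs is well-defined up to $\sim_h$ independently of the chosen isometries, is associative up to $\sim_h$, and respects homotopies in each variable (\cite[Lemma 3.3]{Thomsen00} and its proof).

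\textbf{Proof of (i).} The basic device is this: given a cocycle representation $(\rho,\Iw)\colon(A,\alpha)\to(\CM(B\otimes\CK),\beta\otimes\id_\CK)$, put, for $s\in[0,1)$,
\[
(\rho_s,\Iw^s):=\Big(\ad(S_1^{(s)})\circ\rho,\ \ S_1^{(s)}\Iw_{(\cdot)}(S_1^{(s)})^*+\eins-S_1^{(s)}(S_1^{(s)})^*\Big),
\]
which one checks is again a cocycle representation with $(\rho_0,\Iw^0)=(\rho,\Iw)$, and which extends strictly continuously to $s=1$ with value $(0,\eins)$ — this is exactly where compactness of $\CK$ enters, forcing $S_1^{(s)}(S_1^{(s)})^*\to0$ and $(S_1^{(s)})^*b\to0$ strictly as $s\to1$. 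Applying this simultaneously to both legs of a degenerate pair $d=((\rho,\Iw),(\rho,\Iw))$ gives a homotopy $K_s=((\rho_s,\Iw^s),(\rho_s,\Iw^s))$, $s\in[0,1]$, from $d$ to $0=((0,\eins),(0,\eins))$; here the difference map is identically $0$, so only strict continuity of the legs has to be checked. Thus $d\sim_h0$, hence $x\oplus d\sim_h x\oplus0$. It remains to prove $x\oplus0\sim_h x$: for $x=((\phi,\Iu),(\psi,\Iv))$ run the same construction on both legs, but only for $s\in[0,\tfrac12]$, obtaining $H_s=((\phi_s,\Iu^s),(\psi_s,\Iv^s))$ with $H_0=x$ and $H_{1/2}$ equal, by inspection, to the Cuntz sum $x\oplus0$ formed from the isometries $S_1^{(1/2)},S_2^{(1/2)}$; by independence of the Cuntz sum of the choice of isometries, $H_{1/2}\sim_h x\oplus0$. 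Concatenating yields $x\oplus d\sim_h x$.

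\textbf{The main obstacle.} The delicate point, and the reason for stopping at $s=\tfrac12$ in the last step, is the asymmetry built into the definition of an $(\alpha,\beta[0,1])$-Cuntz pair: the two cocycle representations need only depend \emph{strictly} continuously on the homotopy parameter, whereas their difference must depend \emph{norm}-continuously (it takes values in $B[0,1]\otimes\CK=C([0,1],B\otimes\CK)$). Conjugation by $S_1^{(s)}$ is norm-continuous on $B\otimes\CK$ (this uses only strict continuity of $s\mapsto S_1^{(s)}$), so $s\mapsto\big(S_1^{(s)}(\phi-\psi)(\cdot)(S_1^{(s)})^*,\ S_1^{(s)}(\Iu-\Iv)_{(\cdot)}(S_1^{(s)})^*\big)$ is norm-continuous on $[0,\tfrac12]$; but an isometric conjugation preserves norms, so this difference cannot converge to $0$ as $s\to1$, which is precisely why the construction fails to extend to $s=1$ unless the difference is identically zero (as in $K_s$). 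I expect this continuity bookkeeping, together with the otherwise routine verification that $K_s$ and $H_s$ genuinely define cocycle representations over $B[0,1]$, to be the most technical part.

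\textbf{Proof of (ii), given (i).} Cuntz addition descends to $\IE^G(\alpha,\beta)/\!\sim_h$ and is associative up to $\sim_h$ by \cite[Lemma 3.3]{Thomsen00}. Commutativity $x\oplus y\sim_h y\oplus x$ follows by conjugating $x\oplus y$ along the norm-continuous path of $(\beta\otimes\id_\CK)$-fixed unitaries running inside the copy of $M_2$ spanned by the matrix units $t_it_j^*$, from $\eins$ to the flip $t_1t_2^*+t_2t_1^*$. By (i) every $d\in\ID^G(\alpha,\beta)$ satisfies $x\oplus d\sim_h x$; applying (i) in both orders together with commutativity shows that all elements of $\ID^G(\alpha,\beta)$ are mutually $\sim_h$-equivalent, so they represent one and the same neutral class — in particular $0$ does. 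Finally, writing $\bar x=((\psi,\Iv),(\phi,\Iu))$, the class $[\bar x]$ is inverse to $[x]$: with $R_t$ the image of $\left(\begin{smallmatrix}\cos t & -\sin t\\ \sin t & \cos t\end{smallmatrix}\right)$ under $M_2\hookrightarrow\CM(B\otimes\CK)^{\beta\otimes\id_\CK}$, the path $t\mapsto\big(\ad(R_t)(\phi\oplus\psi,\ \Iu\oplus\Iv),\ (\psi\oplus\phi,\ \Iv\oplus\Iu)\big)$, $t\in[0,\tfrac{\pi}{2}]$, connects $x\oplus\bar x$ to a degenerate pair, and its difference equals $(\phi-\psi)(\cdot)\otimes\left(\begin{smallmatrix}\cos^2 t & \sin t\cos t\\ \sin t\cos t & -\cos^2 t\end{smallmatrix}\right)$ (and similarly for the cocycles), which is norm-continuous in $t$ and vanishes at $t=\tfrac{\pi}{2}$.
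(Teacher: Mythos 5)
Your proof is correct and follows essentially the same route as the paper: in (i) you contract the degenerate pair to $0$ via the isometric path $S_1$ applied to both legs, then prove $x \oplus 0 \sim_h x$ by running the same contraction only to the halfway point; in (ii) you obtain commutativity and inverses from rotation homotopies inside an equivariantly embedded copy of $M_2$. The only cosmetic difference is that you phrase the rotation step as conjugating by a norm-continuous path of unitaries in $M_2 \subseteq \CM(B\otimes\CK)^{\beta\otimes\id_\CK}$, whereas the paper equivalently rotates the isometries defining the Cuntz sum via $r_1^{(t)}=(1-t)^{1/2}r_1+t^{1/2}r_2$ and $r_2^{(t)}=-t^{1/2}r_1+(1-t)^{1/2}r_2$ — the same homotopy written in a different notation.
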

\begin{proof}
For notational convenience, we shall assume that $\beta$ is strongly stable.

\ref{lem:equivalence-simplified:1}:
Choose three cocycle representations
\[
(\phi, \Iu), (\psi, \Iv), (\theta,\Ix): (A,\alpha)\to (\CM(B),\beta)
\]
in order to write $x=\big( (\phi, \Iu), (\psi, \Iv) \big)$ and $d=\big( (\theta,\Ix), (\theta,\Ix) \big)$.
Using \autoref{prop:isometric-path}, we choose strictly continuous maps of isometries $S_1: [0,1)\to\CM(\CK)\subseteq \CM(B)^{\beta}$ and $S_2: (0,1]\to\CM(\CK)\subseteq \CM(B)^{\beta}$ satisfying $S_1^{(0)}=\eins$, $S_2^{(1)}=\eins$, and $S_1^{(t)}S_1^{(t)*}+S_2^{(t)}S_2^{(t)*}=\eins$ for all $t\in (0,1)$.
For each $t\in (0,1]$, we may consider the element
\[
\IE^G(\alpha,\beta) \ni x_t = \big( (\phi, \Iu) \oplus_{S_1^{(t/2)},S_2^{(t/2)}} (\theta,\Ix), (\psi, \Iv)\oplus_{S_1^{(t/2)},S_2^{(t/2)}} (\theta,\Ix) \big).
\]
Because of the strict continuity of the involved maps and $S_1^{(0)}=\eins$, it also follows that $S_2^{(t)}S_2^{(t)*}\to 0$ strictly as $t\to 0$.
This continuous family of Cuntz pairs thus converges to $x_0=x$ as $t\to 0$.
This provides a homotopy between $x=x_0$ and $x_1=x\oplus_{S_1^{(1/2)},S_2^{(1/2)}} d$, proving the claim.

\ref{lem:equivalence-simplified:2}:
Since it is somewhat clear from earlier remarks, we will omit the proof that Cuntz addition descends to a well-defined associative binary operation on $\IE^G(\alpha,\beta)/{\sim_h}$.
We may already conclude from the first part that every degenerate element induces a (right) neutral element in the semigroup $\IE^G(\alpha, \beta)/{\sim_h}$.
It suffices to prove that for any $x,y\in\IE^G(\alpha,\beta)$, one has $x\oplus y\sim_h y\oplus x$ and that there exists $x'\in\IE^G(\alpha,\beta)$ such that $x\oplus x'$ is homotopic to an element in $\ID^G(\alpha,\beta)$.

Choose isometries $r_1, r_2\in\CM(B)^{\beta}$ with $r_1r_1^*+r_2r_2^*=\eins$, and let us agree on forming Cuntz sums with this specific pair.
Define for $t\in [0,1]$ the isometries $r_1^{(t)}=(1-t)^{1/2} r_1+t^{1/2} r_2$ and $r_2^{(t)}=-t^{1/2}r_1+(1-t)^{1/2}r_2$, which also satisfy the $\CO_2$-relation.
These define norm-continuous maps, and we may hence observe that
\[
x\oplus_{r_1,r_2} y = x\oplus_{r_1^{(0)},r_2^{(0)}} y \sim_h x\oplus_{r_1^{(1)},r_2^{(1)}} y = x \oplus_{r_2,r_1} y = y\oplus_{r_1,r_2} x.
\]
If we write $x=\big( (\phi,\Iu), (\psi,\Iv) \big)$, we claim that $x'=\big( (\psi,\Iv), (\phi,\Iu) \big)$ does the trick.
Indeed, the homotopy between $x\oplus_{r_1,r_2} x'$ to an element in $\ID^G(\alpha,\beta)$ is witnessed by the element $X\in\IE^G(\alpha,\beta[0,1])$ given by
\[
X_t = \Big( (\phi,\Iu)\oplus_{r_1,r_2} (\psi,\Iv), (\psi,\Iv)\oplus_{r_1^{(t)},r_2^{(t)}} (\phi,\Iu) \Big) ,\quad t\in [0,1].
\]
Computing that this is indeed a Cuntz pair is straightforward.
\end{proof}

\begin{defi}\label{def:anchored}
We say that an $(\alpha, \beta)$-Cuntz pair $\big( (\phi, \Iu), (\psi,\Iv) \big)$ is \emph{anchored} if the associated cocycle pair $(\Iu, \Iv)$ is homotopic to $(\eins, \eins)$.
Let $\IE_0^G(\alpha,\beta)$ denote the set of all anchored Cuntz pairs.
\end{defi}

In the above definition, we picture in our mind's eye the cocycle pair $(\eins , \eins)$ as the anchor, and the homotopy of cocycle pairs as the chain connecting our Cuntz pair to the anchor.
Note that such a homotopy from $\big( (0, \Iu), (0, \Iv) \big)$ to $\big( (0, \eins), (0,\eins)\big)$ can be chosen such that the entire homotopy is of the form $[0,1] \ni s\mapsto \big( (0, \Iu_s), (0, \Iv_s) \big)$ by simply taking any homotopy and considering the induced cocycle pair.

\begin{prop} \label{prop:KKG-homotopy}
Let $\pi: \IE^G(\alpha,\beta)/{\sim}\to KK^G(\alpha,\beta)$ be Thomsen's natural isomorphism from \cite[Theorem 3.5]{Thomsen98}.
\begin{enumerate}[label=\textup{(\roman*)},leftmargin=*]
\item The classes of cocycle pairs inside $\IE^G(\alpha,\beta)/{\sim_h}$ form a subgroup. \label{prop:KKG-homotopy:1}
\item The inclusion $\IE_0^G(\alpha,\beta)\subseteq\IE^G(\alpha,\beta)$ induces an isomorphism of abelian groups $\IE_0^G(\alpha,\beta)/{\sim_h}\cong\IE^G(\alpha,\beta)/{\sim}$. \label{prop:KKG-homotopy:2}
\item For $x,y\in\IE_0^G(\alpha,\beta)$, one has $\pi([x])=\pi([y])$ if and only if $x\sim_h y$. \label{prop:KKG-homotopy:3}
\end{enumerate}
In summary, $KK^G(\alpha, \beta)$ can be canonically identified with the set of homotopy classes of anchored $(\alpha, \beta)$-Cuntz pairs.
\end{prop}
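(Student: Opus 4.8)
The plan is to reduce the whole statement to elementary group theory applied to the abelian group $\Gamma := \IE^G(\alpha,\beta)/{\sim_h}$ furnished by \autoref{lem:equivalence-simplified}. To a Cuntz pair $x = \big((\phi,\Iu),(\psi,\Iv)\big)$ one associates its cocycle pair $\rho(x) := \big((0,\Iu),(0,\Iv)\big)$, which is again an element of $\IE^G(\alpha,\beta)$. The first step is to observe that $\rho$ induces a well-defined group endomorphism of $\Gamma$: discarding the $*$-homomorphism components turns any homotopy of Cuntz pairs into a homotopy of the associated cocycle pairs, so $\rho$ is compatible with $\sim_h$, and it plainly commutes with Cuntz addition. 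Since $\rho(c) = c$ for every cocycle pair $c$, this endomorphism is idempotent and its image equals $C := \{\,[c] \mid c \text{ a cocycle pair}\,\}$. Being the image of a group homomorphism, $C$ is a subgroup of $\Gamma$, which is the first assertion.

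For the second assertion I would assemble three facts. (a) An idempotent endomorphism of the abelian group $\Gamma$ splits it as an internal direct sum $\Gamma = \ker\rho \oplus C$, so the composite $\ker\rho \hookrightarrow \Gamma \twoheadrightarrow \Gamma/C$ is an isomorphism. (b) By definition a Cuntz pair $x$ is anchored precisely when $\rho(x)$ is homotopic to $0$, that is, when $[x]\in\ker\rho$; hence $\IE_0^G(\alpha,\beta)$ is a union of $\sim_h$-classes and the inclusion induces an isomorphism $\IE_0^G(\alpha,\beta)/{\sim_h}\cong\ker\rho$. (c) Unravelling the definition of $\sim$ in \autoref{def:KKG-Thomsen}, one has $x\sim y$ iff $[x]+[d_1]=[y]+[d_2]$ in $\Gamma$ for some cocycle pairs $d_1,d_2$, and since $C$ is a subgroup this is equivalent to $[x]-[y]\in C$; therefore $\IE^G(\alpha,\beta)/{\sim}\cong\Gamma/C$ via the map induced by $\Gamma\twoheadrightarrow\Gamma/C$. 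Composing the isomorphisms from (b), (a) and (c) yields the second assertion, and a brief diagram chase identifies the composite with the map induced by the inclusion $\IE_0^G(\alpha,\beta)\subseteq\IE^G(\alpha,\beta)$.

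The third assertion is then immediate: as $\pi$ is an isomorphism, $\pi([x])=\pi([y])$ is equivalent to $x\sim y$, and for anchored $x,y$ the second assertion shows this is equivalent to $x\sim_h y$ (the reverse implication being trivial). The concluding identification of $KK^G(\alpha,\beta)$ with the set of homotopy classes of anchored Cuntz pairs is just the second assertion carried across Thomsen's isomorphism $\pi$. I do not expect a genuinely hard step here — the argument is essentially bookkeeping — with the only points deserving care being that $\rho$ is well defined on $\sim_h$-classes and that the abstract isomorphism produced in the second assertion really coincides with the one induced by the inclusion.
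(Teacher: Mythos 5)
Your proposal is correct and follows essentially the same route as the paper: both arguments hinge on the idempotent endomorphism of $\IE^G(\alpha,\beta)/{\sim_h}$ induced by $\big((\phi,\Iu),(\psi,\Iv)\big)\mapsto\big((0,\Iu),(0,\Iv)\big)$ and then invoke the standard splitting of an abelian group along an idempotent. The only minor divergence is in part (i): the paper observes directly that cocycle pairs are closed under Cuntz sums and inverses, whereas you deduce it as the image of the endomorphism $\rho$; this is an equivalent, perhaps slightly more economical, packaging since you reuse $\rho$ for part (ii) anyway.
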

\begin{proof}
Part \ref{prop:KKG-homotopy:1} follows because cocycle pairs are closed under Cuntz sums and inverses, the latter being evident due to the proof of \autoref{lem:equivalence-simplified}. 
Part \ref{prop:KKG-homotopy:3} is a direct consequence of \ref{prop:KKG-homotopy:2}.

\ref{prop:KKG-homotopy:2}: By the definition of the relation $\sim$ and \autoref{lem:equivalence-simplified}, the quotient $\IE^G(\alpha,\beta)/{\sim}$ is nothing but the group quotient of $\IE^G(\alpha,\beta)/{\sim_h}$ modulo the subgroup $H_\beta$ spanned by (homotopy classes of) cocycle pairs.
We have an idempotent group endomorphism $p: \IE^G(\alpha,\beta)/{\sim_h}\to H_\beta$ induced by the assignment $\big( (\phi,\Iu), (\psi,\Iv) \big)\mapsto \big( (0,\Iu),(0,\Iv) \big)$.
By definition, $\IE_0^G(\alpha,\beta)$ is the set of those elements in $\IE^G(\alpha,\beta)$ whose homotopy class belongs to the kernel of $p$.
By the basic algebra of abelian groups, the claim follows.
\end{proof}

The following argument is most likely well-known, but we record it here as it will come in handy later.

\begin{prop} \label{prop:Cuntz-pair-small-perturbation}
Let $(\phi,\Iu), (\psi,\Iv): (A,\alpha)\to(\CM(B\otimes\CK),\beta\otimes\id_\CK)$ be two cocycle representations that form an $(\alpha,\beta)$-Cuntz pair.
For any unitary $u\in\CU(\eins+B\otimes\CK)$, one has  that $\big( (\phi,\Iu), \ad(u)\circ(\psi,\Iv) \big)$ is homotopic to $\big( (\phi,\Iu), (\psi,\Iv) \big)$.
\end{prop}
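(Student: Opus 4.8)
The plan is to find a path of unitaries in $\eins + B \otimes \CK$ connecting $u$ to $\eins$ and use it to implement a homotopy, but since such a path need not exist in general, I would instead pass to the stabilized picture where there is enough room to rotate. The key observation is that for any unitary $u \in \CU(\eins + B \otimes \CK)$, the unitary $\diag(u, u^\ast) \in \CU(\eins + M_2(B \otimes \CK))$ is connected to $\eins$ by the standard rotation path $w_s = R_s \diag(u,\eins) R_s^\ast \diag(\eins, u^\ast)$ (with $R_s$ the usual $2\times 2$ rotation matrices), which lies in $\CU(\eins + M_2(B \otimes \CK))$ and is norm-continuous with $w_0 = \eins$, $w_1 = \diag(u, u^\ast)$. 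Identifying $M_2(B \otimes \CK)$ equivariantly with $B \otimes \CK$ via a pair of $(\beta \otimes \id_\CK)$-fixed isometries $s_1, s_2 \in \CM(B \otimes \CK)$ with $s_1 s_1^\ast + s_2 s_2^\ast = \eins$, we obtain a norm-continuous path $\tilde w_s \in \CU(\eins + B \otimes \CK)$ with $\tilde w_0 = \eins$ and $\ad(\tilde w_1) = \ad(\diag(u,u^\ast))$ under this identification.

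First I would use \autoref{lem:equivalence-simplified}\ref{lem:equivalence-simplified:1} to observe that $\big( (\phi, \Iu), (\psi, \Iv)\big)$ is homotopic to $\big( (\phi, \Iu), (\psi, \Iv)\big) \oplus \big( (\psi, \Iv), (\psi, \Iv)\big)$, and similarly $\big( (\phi, \Iu), \ad(u)\circ(\psi,\Iv)\big)$ is homotopic to $\big( (\phi, \Iu), \ad(u)\circ(\psi,\Iv)\big) \oplus \big( (\psi, \Iv), \ad(u^\ast)\circ(\psi,\Iv)\big)$; for the latter I would first check that $\big((\psi,\Iv), \ad(u^\ast)\circ(\psi,\Iv)\big)$ is itself a degenerate-after-conjugation Cuntz pair, so it is homotopic to a degenerate one by the same lemma combined with a preliminary reduction. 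Then the second component of the resulting stabilized Cuntz pair is $\ad\big(\diag(u, u^\ast)\big) \circ \big((\psi,\Iv)\oplus(\psi,\Iv)\big)$, which via the equivariant identification above equals $\ad(\tilde w_1) \circ (\psi', \Iv')$ for the appropriate cocycle representation $(\psi', \Iv')$ on $\CM(B\otimes\CK)$.

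Next I would build the explicit homotopy: the path $s \mapsto \big( (\phi', \Iu'), \ad(\tilde w_s)\circ(\psi', \Iv')\big)$ is a norm-continuous path of $(\alpha, \beta)$-Cuntz pairs, because $\tilde w_s \in \eins + B \otimes \CK$ guarantees that $\phi' - \ad(\tilde w_s)\circ \psi'$ and $\Iu' - \ad(\tilde w_s)\circ \Iv'$ stay in $B \otimes \CK$ for every $s$, and one checks $\ad(\tilde w_s)\circ(\psi',\Iv')$ is a cocycle representation since $\tilde w_s$ is a unitary multiplier (the cocycle identity for $\ad(\tilde w_s)\Iv'$ follows formally). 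Reparametrizing this over $[0,1]$ and splicing it with the two applications of \autoref{lem:equivalence-simplified} yields the desired homotopy from $\big( (\phi,\Iu), \ad(u)\circ(\psi,\Iv)\big)$ to $\big( (\phi,\Iu), (\psi,\Iv)\big)$ after stabilization — and since stabilization by a degenerate pair does not change the homotopy class, we are done.

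The main obstacle I anticipate is the bookkeeping around the equivariant identification $M_2(B\otimes\CK) \cong B \otimes \CK$ and making sure the rotation path, after transport, genuinely lands in $\CU(\eins + B\otimes\CK)$ rather than merely in $\CU(\CM(B\otimes\CK))$ — this hinges on $\diag(u,u^\ast) - \eins \in M_2(B\otimes\CK)$ and on $R_s$ being a (fixed-point) multiplier so that $w_s - \eins \in M_2(B\otimes\CK)$ as well. A secondary point requiring care is verifying that $\ad(\tilde w_s)\circ(\psi',\Iv')$ remains a valid cocycle representation for all $s$, i.e.\ that conjugating a $\beta\otimes\id_\CK$-cocycle by a unitary multiplier again gives a $\beta\otimes\id_\CK$-cocycle; this is a short but necessary computation using strict continuity of $\tilde w_s$ and the cocycle identity for $\Iv'$.
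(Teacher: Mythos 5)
Your overall strategy — stabilize into a $2\times 2$ picture, use the standard Whitehead rotation $w_s = R_s\,\mathrm{diag}(u,\eins)R_s^\ast\,\mathrm{diag}(\eins,u^\ast)$ from $\eins$ to $\mathrm{diag}(u,u^\ast)$ inside $\CU(\eins + M_2(B\otimes\CK))$, transport via an equivariant identification $M_2(B\otimes\CK)\cong B\otimes\CK$, and then destabilize — is exactly the right idea, and the verification that $w_s-\eins\in M_2(B\otimes\CK)$ and that $\ad(\tilde w_s)\circ(\psi',\Iv')$ stays a cocycle representation forming a Cuntz pair with $(\phi',\Iu')$ is correct. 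This matches the strategy of the paper's proof, which also reduces to connecting $u\oplus u^\ast$ to $\eins$ by a norm-continuous path in $\CU(\eins+B\otimes\CK)$.

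The genuine gap is in the stabilization step. You want to apply \autoref{lem:equivalence-simplified}\ref{lem:equivalence-simplified:1} to add $\big((\psi,\Iv),\ad(u^\ast)\circ(\psi,\Iv)\big)$ to the pair $\big((\phi,\Iu),\ad(u)\circ(\psi,\Iv)\big)$. But that lemma only allows stabilization by an element of $\ID^G(\alpha,\beta)$, i.e.\ a \emph{degenerate} Cuntz pair with $\phi=\psi$ and $\Iu=\Iv$; the pair $\big((\psi,\Iv),\ad(u^\ast)\circ(\psi,\Iv)\big)$ is not degenerate since $\ad(u^\ast)\circ(\psi,\Iv)\neq(\psi,\Iv)$ in general. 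Your proposed patch — that this pair is "degenerate-after-conjugation" and hence "homotopic to a degenerate one by the same lemma combined with a preliminary reduction" — does not supply a valid argument: showing that $\big((\psi,\Iv),\ad(u^\ast)\circ(\psi,\Iv)\big)$ is homotopic to a degenerate pair is precisely the statement of the proposition you are trying to prove applied with $\phi=\psi$ and $u$ replaced by $u^\ast$, so the reasoning is circular. (The lemma you cite cannot help you here, since it deals only with absorbing already-degenerate pairs, not with proving that a given pair is homotopic to a degenerate one.) The paper sidesteps this by stabilizing with the genuine degenerate pair $0=\big((0,\eins),(0,\eins)\big)$, using the algebraic identity $\ad(u)\circ(\psi,\Iv)\oplus 0 = \ad(u\oplus u^\ast)\circ\big((\psi,\Iv)\oplus 0\big)$ (where the zero slot is what allows $u$ and $u^\ast$ to be split between the two corners without introducing any new non-degenerate piece), and then applying the rotation path to $(\psi\oplus 0,\Iv\oplus\eins)$ directly. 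To repair your argument you would need to replace the second stabilization by one with a genuine degenerate pair from $\ID^G(\alpha,\beta)$, or else carry out a direct homotopy that does not invoke \autoref{lem:equivalence-simplified}\ref{lem:equivalence-simplified:1} on a non-degenerate summand.
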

\begin{proof}
By \autoref{lem:equivalence-simplified}, there is a homotopy from $\big( (\phi,\Iu), \ad(u)\circ(\psi,\Iv) \big)$ to
\[
\big( (\phi,\Iu), \ad(u)\circ(\psi,\Iv) \big) \oplus 0 =  \big( (\phi\oplus 0,\Iu\oplus\eins), \ad(u\oplus u^*)\circ(\psi\oplus 0,\Iv\oplus\eins) \big).
\]
The unitary $u\oplus u^*\in\CU(\eins+B\otimes\CK)$ is norm-homotopic to the unit inside this unitary group; for instance, if the Cuntz sum is formed via $r_1,r_2$, then the assignment
\[
[0,1] \ni t \mapsto \eins-t +tr_1ur_1^*+\sqrt{t(1-t)}(r_2(u^*-\eins)r_1^*-r_1(u-\eins)r_2^*)+tr_2u^*r_2^*
\]
is such a homotopy.
This yields a homotopy between
\[
\big( (\phi\oplus 0,\Iu\oplus\eins),  \ad(u\oplus u^*)\circ (\psi\oplus 0,\Iv\oplus\eins) \big) \quad \textrm{and} \quad \big( (\phi\oplus 0,\Iu\oplus\eins),   (\psi\oplus 0,\Iv\oplus\eins) \big).
\] 
The latter Cuntz pair is homotopic to $\big( (\phi,\Iu), (\psi,\Iv) \big)$ by \autoref{lem:equivalence-simplified}.
\end{proof}

We will in various instances make use of the following useful fact due to Kasparov concerning quasicentral approximate units that are approximately invariant under a group action.

\begin{lemma}[see {\cite[Lemma 1.4]{Kasparov88}}] \label{lem:Kasparov}
Let $\beta: G\curvearrowright B$ be an action on a $\sigma$-unital \cstar-algebra.
Then for any separable \cstar-subalgebra $D\subseteq\CM(B)$, there exists a countable, increasing approximate unit of positive contractions $h_n\in B$ 
satisfying  
\[
\lim_{n\to\infty} \|[h_n,d]\|=0
\]
for all $d\in D$, and
\[
\lim_{n\to\infty} \max_{g\in K} \| h_n-\beta_g(h_n) \| = 0
\] 
for all compact sets $K\subseteq G$.
\end{lemma}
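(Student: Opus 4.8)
The plan is to build $(h_n)$ in two stages: first to manufacture a carefully nested quasicentral approximate unit $(e_k)$ for $B$ relative to $D$ that is in addition ``approximately $G$-localised'', and then to pass to shifted Ces\`aro averages of it. It is worth saying at the outset why the second stage is needed: for non-amenable $G$ one cannot gain $G$-invariance by the naive device of integrating a fixed approximate unit element $e$ against a bump function $f\in C_c(G)_+$ with $\int f=1$, since $\|\beta_{g_0}(e^f)-e^f\|\le\|f-f(g_0^{-1}\cdot)\|_1$ for $e^f:=\int_G f(g)\beta_g(e)\,dg$, and making the latter small on a prescribed compact set is exactly Reiter's condition, hence equivalent to amenability. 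So the decay must instead come from a ``slow growth'' effect.

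First I would invoke the standard fact (Arveson; its $\sigma$-unital refinement due to Akemann--Pedersen) that $B$ admits an increasing quasicentral approximate unit $(v_m)$ of positive contractions relative to $D$, noting that any subsequence inherits these properties. Fixing an exhaustion $K_1\subseteq K_2\subseteq\cdots$ of $G$ by compact symmetric sets with $e\in\operatorname{int}(K_1)$ and $K_kK_k\subseteq K_{k+1}$, together with dense sequences in the unit balls of $B$ and $D$, I would extract a subsequence $e_k:=v_{m_k}$ by the following recursion: having fixed $e_k$, choose (using point-norm continuity of $\beta$ on $B$ and compactness) a finite symmetric net $N_k\subseteq K_k$ fine enough that $g\mapsto\beta_g(e_k)$ varies by at most $2^{-k}$ across each net cell; then choose $m_{k+1}$ so large that $e_{k+1}$ is a $2^{-k}$-unit for $\{e_k\}\cup\{\beta_h(e_k):h\in N_k\}$ and for the first $k$ terms of the dense sequence in $B$. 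Since all these elements lie in $B$, this is possible, and $(e_k)$ remains an increasing quasicentral approximate unit for $B$ relative to $D$. Its crucial extra property is: for each $j$ there is $k_0$ such that for $k\ge k_0$ and $g\in K_j$ one has $\|(\eins-e_{k+1})\beta_g(e_k)\|$ small and, after applying $\beta_{g^{-1}}$ (and using $e_{k-2}\le e_{k-1}$ together with the elementary functional-calculus fact that near-unit-ness passes to smaller positives at the cost of a root in the error), also $\|\beta_g(e_k)e_{k-2}-e_{k-2}\|$ small --- with errors summable in $k$.

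Finally I would put $h_n:=\frac1{\ell_n}\sum_{k=a_n+1}^{a_n+\ell_n}e_k$, where $a_n,\ell_n\to\infty$ with $a_{n+1}\ge a_n+\ell_n$ and $a_n$ large relative to $\ell_n$. Each $h_n$ is a positive contraction; $(h_n)$ is increasing because $\frac1{\ell_n}\sum_{k=a_n+1}^{a_n+\ell_n}e_k\ge e_{a_n+1}\ge e_{a_{n-1}+\ell_{n-1}}\ge h_{n-1}$; and since for $a_n$ large every $e_k$ with $k>a_n$ is a near-unit for a fixed $b\in B$ and $2^{-k}$-quasicentral for a fixed $d\in D$, averaging yields $\|h_nb-b\|\to0$ and $\|[h_n,d]\|\to0$. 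The step I expect to be the real obstacle is the approximate $G$-invariance. For $K\subseteq K_j$, $g\in K$, and $k$ large, the estimates above show that $X_k:=\beta_g(e_k)-e_k$ (self-adjoint, $\|X_k\|\le1$) differs by a $k$-summable error from $F_kX_kF_k$, where $F_k:=e_{k+1}-e_{k-2}\ge0$, and that $F_kF_{k'}$ is small once $|k-k'|\ge4$. Splitting $\{a_n+1,\dots,a_n+\ell_n\}$ into residue classes modulo $4$ and exploiting that within each class the $X_k$ have approximately orthogonal ``supports'', one bounds $\|\sum_{k=a_n+1}^{a_n+\ell_n}X_k\|$ by a constant independent of $\ell_n$ plus a summable tail, whence
\[
\max_{g\in K}\|h_n-\beta_g(h_n)\|=\frac1{\ell_n}\max_{g\in K}\Big\|\sum_{k=a_n+1}^{a_n+\ell_n}(\beta_g(e_k)-e_k)\Big\|\le\frac{C+o(1)}{\ell_n}\xrightarrow[n\to\infty]{}0 .
\]
Thus the entire difficulty lies in building enough scale-by-scale localisation into the approximate unit that the increments $\beta_g(e_k)-e_k$ at widely separated scales become nearly orthogonal, so that their partial sums stay bounded while the Ces\`aro normalisation tends to infinity.
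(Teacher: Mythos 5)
The paper does not prove this lemma; it cites \cite[Lemma 1.4]{Kasparov88}, so the relevant comparison is with Kasparov's original argument. Your proposal is correct and has the same overall architecture as Kasparov's proof: extract a rapidly growing subsequence $(e_k)$ from an Arveson quasicentral approximate unit so that each $e_{k+1}$ acts as a near-unit for $\beta_g(e_k)$ over ever larger compact sets (with summable errors), and then take Ces\`aro averages over long blocks. Quasicentrality and the approximate-unit property pass trivially to such convex combinations, so the only real content is the approximate $G$-invariance, which must come from the fact that a single index shift contributes $O(1/\ell_n)$ to the average. Your diagnosis that naive group averaging is blocked by Reiter's condition is exactly the right reason this is nontrivial.

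Where you diverge from Kasparov is in how that $O(1/\ell_n)$ estimate is extracted. You show each difference $X_k=\beta_g(e_k)-e_k$ is (up to summable error) compressed into the increment $F_k=e_{k+1}-e_{k-2}$, note that $F_kF_{k'}\approx0$ for $|k-k'|\ge4$, and split the block into four residue classes so that within each class the $F_k$'s have almost orthogonal supports and sum to at most $\eins$; a Cauchy--Schwarz bound then gives $\|\sum_k X_k\|\le C+o(1)$. This works, but is slightly more combinatorial than necessary. Kasparov's route is to turn the near-unit property into genuine \emph{order} estimates: from $\|(\eins-e_{k+1})\beta_g(e_k)\|<\varepsilon_k$ one gets $\beta_g(e_k)\le e_{k+1}+2\varepsilon_k\eins$ (since $a=c^{1/2}ac^{1/2}+O(\varepsilon)\le c+O(\varepsilon)$ when $\|a\|\le1$), and similarly $e_{k-1}\le\beta_g(e_k)+2\varepsilon_k\eins$; sandwiching $\beta_g(e_k)$ between neighbours in the order then makes $\sum_{k}(\beta_g(e_k)-e_k)$ telescope to two boundary terms plus the summable error, with no need for the residue-class decomposition or the almost-orthogonality Cauchy--Schwarz computation. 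Both yield the same $\|h_n-\beta_g(h_n)\|\le(C+o(1))/\ell_n$. Minor points worth tightening in a full write-up: $\|X_k\|\le2$ rather than $\le1$ (harmless); the near-unit property for $\beta_h(e_k)$ should be arranged over nets of $K_{k}$ (or $K_{k+1}$) rather than just $K_k$ so that the argument for $\beta_g(e_k)e_{k-2}\approx e_{k-2}$, which passes through $\beta_{g^{-1}}$, still lands in a controlled compact set --- you already signal this by only claiming the estimates hold for $k$ large relative to $j$, which is enough.
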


To conclude this section, we establish the following fact for later use.
Although it is probably known to some experts, we decided to include an elementary and self-contained proof for the reader's convenience.
The argument itself is adapted from the proof of \cite[Lemma 3.26]{CuntzMeyerRosenberg}.

\begin{theorem} \label{thm:basic-op-homotopy}
Let $\CH$ be an infinite-dimensional separable Hilbert space.
There exists a unital representation $\theta: \CC[0,1]\to \CB(\CH)$ and a norm-continuous unitary path $w: [0,1]\to\CU(\IC\oplus\CH)$ with $w_0=\eins$ such that
\begin{equation}\label{eq:ev1theta}
\ev_1\oplus\theta = \ad(w_1)\circ(\ev_0\oplus\theta)
\end{equation}
and
\begin{equation}\label{eq:wtcommute}
\big[ w_t, (\ev_0\oplus\theta)(\CC[0,1]) \big] \subseteq \CK(\IC\oplus\CH),\quad t\in [0,1].
\end{equation}
Here the direct sum refers to the ordinary direct sum of $*$-representations.
\end{theorem}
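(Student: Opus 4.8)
The plan is to produce a single concrete model in which the homotopy and the compact-commutator condition can both be verified by hand, rather than appealing to abstract $KK$-machinery. The natural choice is to let $\theta\colon \CC[0,1]\to\CB(\CH)$ be a representation with \emph{no} point masses in its spectral measure — concretely, identify $\CH\cong L^2[0,1]$ (Lebesgue measure) and let $\theta$ act by multiplication by the coordinate function. Then $\ev_0\oplus\theta$ and $\ev_1\oplus\theta$ are both unital representations of $\CC[0,1]$ on $\IC\oplus\CH$, and the key point is that the one-dimensional summand $\ev_i$ contributes an atom at $i$ to the spectral measure, while everything else is non-atomic. Since non-atomic parts of the spectral measure are unitarily equivalent up to compacts after adding a suitable atom (this is exactly the content of the Weyl--von Neumann--Berg type argument used in \cite[Lemma 3.26]{CuntzMeyerRosenberg}), $\ev_0\oplus\theta$ and $\ev_1\oplus\theta$ are unitarily equivalent via a unitary $w_1$ with $[w_1,(\ev_0\oplus\theta)(\CC[0,1])]\subseteq\CK$.

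First I would make the existence of $w_1$ precise: after fixing the identification $\CH\cong L^2[0,1]$, both $\ev_0\oplus\theta$ and $\ev_1\oplus\theta$ are, up to unitary equivalence, multiplication representations on $L^2$ of a measure of the form (atom at a point) $+$ (Lebesgue), and by the Weyl--von Neumann theorem any two such are unitarily equivalent modulo compacts; moreover the implementing unitary differs from $\eins$ by a compact, so in particular $w_1\in\CU(\IC\oplus\CH)$ with the required commutator condition. Second, to get the \emph{path} $w_t$ with $w_0=\eins$: since $w_1=\eins+k$ for some compact $k$ and $w_1$ is unitary in $\IC\oplus\CH$, it lies in the connected group $\CU(\eins+\CK(\IC\oplus\CH))$ (the unitary group of the unitization of the compacts is connected), so we may pick a norm-continuous path from $\eins$ to $w_1$ inside $\eins+\CK$. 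Since every $w_t$ then differs from $\eins$ by a compact, the commutator $[w_t,(\ev_0\oplus\theta)(f)]$ automatically lies in $\CK(\IC\oplus\CH)$ for every $f$, giving \eqref{eq:wtcommute} for free. Finally \eqref{eq:ev1theta} is just the defining property of $w_1$.

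The main obstacle is getting the compact-commutator condition to hold \emph{along the whole path}, not merely at $t=1$; the trick above — arranging $w_1-\eins$ to be compact and running the path inside $\eins+\CK$ — is what makes this painless, and it hinges on knowing that the implementing unitary $w_1$ can be chosen of the form $\eins+\text{compact}$. This is where I would be most careful: the standard Weyl--von Neumann statement gives $w_1$ with $[w_1,\,\cdot\,]$ compact, but to get $w_1-\eins$ compact one uses that the two representations $\ev_0\oplus\theta$ and $\ev_1\oplus\theta$ have the \emph{same} essential spectrum and the same (zero) values of the relevant index-type invariants because the perturbation is a finite-rank change of the spectral measure; alternatively one can cite the version of Voiculescu's theorem that produces the unitary in $\eins+\CK$ directly when the two representations agree modulo compacts, which they do here since $\ev_0$ and $\ev_1$ are both finite-dimensional and $\ev_0\oplus\theta$, $\ev_1\oplus\theta$ differ by a ``rank-one move'' of an atom across the non-atomic part. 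Once that is in hand, the remaining verifications are routine, and the proof closes.
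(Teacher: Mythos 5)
Your choice of $\theta$ breaks the argument before it starts. With $\theta$ given by multiplication by the coordinate function on $L^2[0,1]$ (Lebesgue), consider the operator $T_i := (\ev_i\oplus\theta)(\iota)$ where $\iota(s)=s$ is the coordinate function. Since Lebesgue measure has no atoms, the multiplication operator $M_\iota$ has no eigenvalues, so $T_0 = 0\oplus M_\iota$ has a one-dimensional kernel (the $\IC$ summand) while $T_1 = 1\oplus M_\iota$ is injective. Unitary conjugation preserves kernels, so there is no unitary $w_1$ at all with $\ev_1\oplus\theta = \ad(w_1)\circ(\ev_0\oplus\theta)$ for this $\theta$; equation \eqref{eq:ev1theta} asks for \emph{exact} equality, not equivalence modulo $\CK$, and Weyl--von Neumann--Berg only delivers the latter. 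To make \eqref{eq:ev1theta} attainable, $\theta$ must already carry atoms at $0$ and at $1$ of infinite multiplicity (finite multiplicity would be shifted by adding the one-dimensional summand $\ev_i$ and again block the equivalence); this is exactly why the paper builds $\theta=\pi\oplus\rho_3$ from the heavily atomic diagonal representations over the dyadic tree, where every dyadic point appears with infinite multiplicity.

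Even granting a suitable atomic $\theta$, the second step --- arranging $w_1\in\CU(\eins+\CK(\IC\oplus\CH))$ so that you can run the whole path inside $\eins+\CK$ --- is not free. That the implementing unitary for two representations that agree modulo compacts may be taken of the form $\eins+\text{compact}$ is precisely a stable-uniqueness-type statement (for $A=\CC[0,1]$, $B=\IC$, $G=\{1\}$), and your gesture toward a suitable ``version of Voiculescu's theorem'' or ``zero index-type invariants'' is where the actual work would be, not a routine citation; indeed the whole point of \autoref{thm:basic-op-homotopy} is to supply this ingredient elementarily without circularly invoking such machinery. The paper's proof never needs $w_1\in\eins+\CK$: it factors $w_1 = w^{(3)}w^{(2)}w^{(1)}$ into three explicit \emph{self-adjoint} unitaries, each commuting with $(\ev_0\oplus\theta)(\CC[0,1])$ modulo compacts, and then runs the path from $\eins$ to each $w^{(i)}$ inside the abelian $C^*$-algebra $C^*(w^{(i)})$ --- which automatically satisfies \eqref{eq:wtcommute} because the commutator condition passes from $w^{(i)}$ to the $C^*$-algebra it generates. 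That device completely removes both of the obstructions your approach runs into.
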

\begin{proof}
We note that since the Hilbert space $\CH$ is uniquely determined up to isometric isomorphism, it suffices to prove the claim for a concrete choice of $\CH$, which we will specify below.
We will construct three self-adjoint unitaries $w^{(i)}\in \mathcal U(\mathbb C \oplus \mathcal H)$ such that 
\begin{equation}\label{eq:wicommute}
[w^{(i)} , (\ev_0 \oplus \theta)(\mathcal C[0,1])] \subseteq \mathcal K(\mathbb C \oplus \mathcal H),\quad i=1,2,3
\end{equation}
and so that $w_1 := w^{(3)} w^{(2)} w^{(1)}$ satisfies \eqref{eq:ev1theta}. Then any product of unitary paths from $w^{(i)}$ to $\mathbf 1$ inside $C^\ast(w^{(i)})$ (which exist since each $w^{(i)}$ is self-adjoint), will produce a path $(w_t)_{t\in [0,1]}$ from $w_1$ to $\mathbf 1$ satisfying \eqref{eq:wtcommute}.

Let $X:= \{ (l,k) : l\in \mathbb N_0, k\in \{0,\dots, 2^l-1\}\}$ and decompose $X = X_1 \sqcup X_2 \sqcup X_3$ by $X_1 := \{(0,0)\}$,
\[
X_2 := \{ (l,k)\in X: k \textrm{ even, } l\neq 0\}, \quad X_3 := \{ (l,k)\in X: k \textrm{ odd}\}.
\]
Define (diagonal) representations $\pi, \rho \colon \mathcal C[0,1] \to \mathcal B(\ell^2(X))$ by
\[
\pi(f) \delta_{(l,k)} = f(k2^{-l})\delta_{(l,k)}, \quad \rho(f)\delta_{(l,k)} = f((k+1)2^{-l}) \delta_{(l,k)}
\]
for $f\in \mathcal C[0,1]$ and $(l,k)$, with subrepresentations $\pi_j, \rho_j \colon \mathcal C[0,1] \to \mathcal B(\ell^2(X_j))$ for $j=1,2,3$. Similarly, we define a representation $\eta_3 \colon \mathcal C[0,1] \to \mathcal B(\ell^2(X_3))$ by
\[
\eta_3(f) \delta_{(l,k)} = f((k-1)2^{-l}) \delta_{(l,k)}, \quad f\in \mathcal C[0,1],\, (l,k) \in X_3.
\]
It is easy to see that $(\pi, \rho)$, $(\pi_j, \rho_j)$ for $j=1,2,3$, and $(\pi_3, \eta_3)$ all define Cuntz pairs.
Define 
\[
\mathcal H:= \ell^2(X) \oplus \ell^2(X_3)\quad \textrm{and} \quad \theta := \pi \oplus \rho_3.
\]
 We will now produce the unitaries $w^{(1)}, w^{(2)},w^{(3)}$ as described above.

Let $U\in \mathcal U(\ell^2(X), \ell^2(X_3))$ be given by $U\delta_{(l,k)} = \delta_{(l+1,2k+1)}$. Then
\[
U\pi(f) \delta_{(l,k)} = f(k2^{-l}) \delta_{(l+1,2k+1)} =\eta_3(f)\delta_{(l+1,2k+1)} = \eta_3(f) U \delta_{(l,k)} 
\]
for $f\in \mathcal C[0,1]$ and $(l,k) \in X$, so $\ad U \circ \pi = \eta_3$. Similarly,
\[
U\rho(f) \delta_{(l,k)} =  f((k+1)2^{-l}) \delta_{(l+1,2k+1)} = \rho_3(f) \delta_{(l+1,2k+1)} = \rho_3(f) U \delta_{(l,k)}
\]
for $f\in \mathcal C[0,1]$ and $(l,k) \in X$, so $\ad U^\ast \circ \rho_3 = \rho$. Define
\[
w^{(1)} := \mathbf 1_{\mathbb C} \oplus \left( \begin{array}{cc} 0 & U^\ast \\ U & 0 \end{array} \right) \in \mathcal U(\mathbb C \oplus \ell^2(X) \oplus \ell^2(X_3))
\]
which is a self-adjoint unitary.
By the above computations we have
\[
\ad w^{(1)} \circ (\ev_0 \oplus \theta)  = \ad w^{(1)} \circ (\ev_0 \oplus \pi \oplus \rho_3) = \ev_0 \oplus \rho \oplus \eta_3. 
\]
Since $(\pi , \rho)$ and $(\rho_3, \eta_3)$ form Cuntz pairs, it follows that $\ad w^{(1)} \circ (\ev_0 \oplus \theta)$ and $\ev_0 \oplus \theta$ form a Cuntz pair, and therefore \eqref{eq:wicommute} is satisfied.

For $w^{(2)}$, we decompose 
\[
\mathbb C \oplus \mathcal H = \mathbb C \oplus \mathbb C \oplus \ell^2(X_2) \oplus \ell^2(X_3) \oplus \ell^2(X_3)
\]
where we used that $\ell^2(X_1) = \mathbb C$. Let $w^{(2)}$ be the (self-adjoint) unitary which swaps the two copies of $\mathbb C$ and of $\ell^2(X_3)$. Then
\[
\ad w^{(2)} \circ \ev_0 \oplus \rho \oplus \eta_3 = \rho_1 \oplus \ev_0 \oplus \rho_2 \oplus \eta_3 \oplus \rho_3.
\]
As above, one sees that $\ad w^{(2)}$ satisfies \eqref{eq:wicommute} (since everything that could form a Cuntz pair, does form a Cuntz pair). 

To define $w^{(3)}$ we let $V \in \mathcal U(\ell^2(X_2) , \ell^2(X_3))$ be given by
\[
V \delta_{(l,k)} = \delta_{(l,k+1)}, \qquad (l,k) \in X_2.
\]
Then for $f\in \mathcal C[0,1]$ and $(l,k) \in X_2$ we get
\[
V \rho_2(f) \delta_{(l,k)} = f((k+1)2^{-l}) \delta_{(l,k+1)} = \pi_3(f) \delta_{(l,k+1)} = \pi_3(f) V \delta_{(l,k)}, 
\]
and
\[
V\pi_2(f) \delta_{(l,k)} = f(k2^{-l}) \delta_{(l,k+1)} = \eta_3(f) \delta_{(l,k+1)} = \eta_3(f) V \delta_{(l,k)}.
\]
Hence $\ad V \circ \rho_2 = \pi_3$ and $\ad V^\ast \circ \eta_3 = \pi_2$. Define 
\[
w^{(3)} = \mathbf 1_{\mathbb C\oplus \mathbb C} \oplus \left( \begin{array}{cc} 0 & V^\ast \\ V & 0 \end{array} \right) \oplus \mathbf 1_{\ell^2(X_3)} \in \mathcal U(\mathbb C \oplus \mathbb C \oplus \ell^2(X_2) \oplus \ell^2(X_3) \oplus \ell^2(X_3))
\]
which is self-adjoint. Since $\ev_0 = \pi_1$ and $\rho_1 = \ev_1$, we get
\[
\ad w^{(3)} \circ (\rho_1 \oplus \ev_0 \oplus \rho_2 \oplus \eta_3 \oplus \rho_3) = \ev_1 \oplus \pi \oplus \rho_3 = \ev_1 \oplus \theta.
\]
As above, it follows that $\ad w^{(3)}$ satisfies \eqref{eq:wicommute}. In conclusion, with $w_1 := w^{(3)} w^{(2)} w^{(1)}$ we get
\[
\ad w_1 \circ (\ev_0 \oplus \theta) = \ev_1 \oplus \theta,
\]
and each $w^{(i)}$ is self-adjoint satisfying \eqref{eq:wicommute}, as desired.
\end{proof}

For the rest of the paper, our blanket assumption will be (unless specified otherwise) that $G$ is a second-countable locally compact group, that $A$ is a separable and $B$ a $\sigma$-unital \cstar-algebra, and that $\alpha: G\curvearrowright A$ and $\beta: G\curvearrowright B$ are continuous actions.

%%%%%%%%%%%%%%%%%%%%%%%%%%%%%%%%%

\section{From homotopy to stable operator homotopy}

\begin{nota} \label{nota:corona-notation}
We will denote the corona algebra of $B$ by $\CQ(B)=\CM(B)/B$.
We will not give the quotient map $\CM(B)\to\CQ(B)$ a name, but for an element $b\in\CM(B)$ or a \cstar-algebra $D\subseteq\CM(B)$, we will write $\bar{b}$ or $\bar{D}$ for the image under the quotient map.
Likewise, if $\phi: A\to\CM(B)$ is a map, we will write $\bar{\phi}$ for its composition with the quotient map.
If $\beta: G\curvearrowright B$ is an action, then $\bar{\beta}$ is the induced (algebraic) action on $\CQ(B)$.
\end{nota}

\begin{nota}
Given an action $\beta: G\curvearrowright B$ on a \cstar-algebra, denote
\[
\CM^\beta(B) = \{ x\in\CM(B) \mid \{x-\beta_g(x)\}_{g\in G}\subseteq B \}.
\]
Then $\CM^\beta(B)$ is a unital \cstar-subalgebra of $\CM(B)$ that contains the genuine fixed point subalgebra $\CM(B)^\beta$. 
In fact, under the quotient map $\CM(B)\to\CQ(B)$, it is the preimage of the \cstar-algebra $\CQ(B)^{\bar{\beta}}$, so there is an equivariant short exact sequence
\[
\xymatrix{
0 \ar[r] & B  \ar[r] & \CM^\beta(B)  \ar[r] & \CQ(B)^{\bar{\beta}} \ar[r] & 0.
}
\]
As a consequence, the restriction of $\beta$ to $\CM^\beta(B)$ is necessarily point-norm continuous; see \cite[Theorem 2.1]{Brown00}.
\end{nota}

\begin{nota}
For a cocycle representation $(\phi,\Iu): (A,\alpha)\to (\CM(B),\beta)$, denote
\[
D_{(\phi,\Iu)} = \CM^{\beta^\Iu}(B)\cap \{ x\in\CM(B) \mid  [x,\phi(A)]\subseteq B\}.
\]
Then $D_{(\phi,\Iu)}$ is a unital \cstar-algebra. 
In fact, under the quotient map $\CM(B)\to\CQ(B)$, $D_{(\phi,\Iu)}$ is the preimage of the \cstar-algebra $\bar{D}_{(\phi,\Iu)} = \big( \CQ(B)\cap\bar{\phi}(A)' \big)^{\bar{\beta}^\Iu}$, so there is a short exact sequence
\[
\xymatrix{
0 \ar[r] & B  \ar[r] & D_{(\phi,\Iu)}  \ar[r] & \big( \CQ(B)\cap\bar{\phi}(A)' \big)^{\bar{\beta}^\Iu}  \ar[r] & 0.
}
\]
\end{nota}

\begin{prop} \label{prop:D-unitaries}
Suppose $\beta$ is strongly stable and let $(\phi,\Iu): (A,\alpha)\to(\CM(B),\beta)$ be a cocycle representation.
A unitary $v\in\CU(\CM(B))$ belongs to $D_{(\phi,\Iu)}$ if and only if $(\phi,\Iu)$ forms an $(\alpha,\beta)$-Cuntz pair together with the cocycle representation $(\psi,\Iv)=\ad(v)\circ(\phi,\Iu)=(\ad(v)\circ\phi, v\Iu_\bullet\beta_\bullet(v)^*)$. 
\end{prop}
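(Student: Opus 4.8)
The plan is to unwind the definitions on both sides and check that the two conditions in the biconditional translate into literally the same requirements on the unitary $v$. Write $(\psi, \Iv) = \ad(v) \circ (\phi, \Iu)$, meaning $\psi = \ad(v) \circ \phi$ and $\Iv_g = v \Iu_g \beta_g(v)^*$. First I would confirm that $(\psi, \Iv)$ is a cocycle representation $(A,\alpha) \to (\CM(B),\beta)$ whenever $v \in \CU(\CM(B))$: this is a purely formal computation showing that $\Iv$ is a strictly continuous $\beta$-cocycle and that $\ad(\Iv_g) \circ \beta_g \circ \psi = \psi \circ \alpha_g$, both of which follow from the corresponding identities for $(\phi, \Iu)$ together with the cocycle identity. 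I would only sketch this, as it is routine and does not use strong stability.

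Next, the heart of the matter: $(\phi, \Iu)$ and $(\psi, \Iv)$ form an $(\alpha, \beta)$-Cuntz pair precisely when the pointwise differences $\phi(a) - \psi(a)$ and $\Iu_g - \Iv_g$ lie in $B \otimes \CK$ (equivalently, in $B$ since $\beta$ is strongly stable, so we may identify $(\CM(B),\beta)$ with $(\CM(B\otimes\CK),\beta\otimes\id_\CK)$, as permitted in \autoref{def:equi-Cuntz-pair}). Now $\phi(a) - \psi(a) = \phi(a) - v\phi(a)v^* = [1,\phi(a)] \cdot$(stuff) — more precisely, $\phi(a) - v\phi(a)v^* \in B$ for all $a$ if and only if $v\phi(a)v^* - \phi(a) \in B$, which (multiplying by the unitary $v^*$ on the right, using that $B$ is an ideal) is equivalent to $v\phi(a) - \phi(a)v \in B$, i.e.\ $[v, \phi(a)] \in B$ for all $a \in A$. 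That is exactly the condition $v \in \{x \in \CM(B) \mid [x,\phi(A)] \subseteq B\}$. Similarly, $\Iu_g - \Iv_g = \Iu_g - v\Iu_g\beta_g(v)^*$, and multiplying on the right by $\beta_g(v)\Iu_g^*$ (unitaries in $\CM(B)$, so this is an automorphism preserving $B$) converts membership in $B$ into $\Iu_g\beta_g(v)\Iu_g^* - v \in B$. By the definition of the cocycle-perturbed action, $\beta^\Iu_g(v) = \Iu_g\beta_g(v)\Iu_g^*$, so this says $\beta^\Iu_g(v) - v \in B$ for all $g \in G$, which is precisely $v \in \CM^{\beta^\Iu}(B)$. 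Combining the two conditions gives exactly $v \in D_{(\phi,\Iu)}$.

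I do not anticipate a serious obstacle here; the statement is essentially a bookkeeping lemma, and the only mild subtlety is keeping track of which unitary to multiply by on which side to move a difference into the ideal $B$ (using that $B \triangleleft \CM(B)$ and that conjugation by, or left/right multiplication by, a unitary in $\CM(B)$ preserves $B$). A secondary point worth a sentence is the strict continuity of $g \mapsto \Iv_g$, which follows from strict continuity of $g \mapsto \Iu_g$ and of $g \mapsto \beta_g(v)$ together with joint strict continuity of multiplication on bounded sets; and one should note $\Iv_e = v\Iu_e v^* = 1$ so $\Iv$ is genuinely a normalized cocycle. The role of the hypothesis that $\beta$ is strongly stable is only to license writing the codomain as $(\CM(B),\beta)$ rather than $(\CM(B\otimes\CK),\beta\otimes\id_\CK)$ in the phrase ``$(\alpha,\beta)$-Cuntz pair'', so I would remark on that explicitly but not belabor it.
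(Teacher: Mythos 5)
Your proposal is correct and follows essentially the same route as the paper's proof: both unwind the two conditions defining $D_{(\phi,\Iu)}$, show $\psi(a)-\phi(a)\in B$ is equivalent to $[v,\phi(a)]\in B$, and show $\Iv_g-\Iu_g\in B$ is equivalent to $v-\beta^\Iu_g(v)\in B$, with the only cosmetic difference being that the paper writes the latter difference as the factorization $(v-\beta^\Iu_g(v))\cdot\beta^\Iu_g(v)^*\Iu_g$ whereas you multiply on the right by the inverse unitary. Your supplementary remarks on strict continuity of $\Iv$ and the role of strong stability are accurate but are not needed for the argument.
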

\begin{proof}
One always has
\[
\psi(a)-\phi(a)=[v,\phi(a)]v^*,\quad a\in A.
\]
So this difference is always in $B$ if and only if $[v,\phi(a)]\in B$ for all $a\in A$.
Furthermore, we have
\[
\Iv_g-\Iu_g = v\Iu_g\beta_g(v)^*-\Iu_g = (v-\beta^\Iu_g(v))\cdot \beta^\Iu_g(v)^*\Iu_g.
\]
So this difference is always in $B$ if and only if $\{v-\beta^\Iu_g(v)\}_{g\in G}\subseteq B$.
In conclusion, we see that $(\phi,\Iu)$ and $(\psi,\Iv)$ form an $(\alpha,\beta)$-Cuntz pair if and only if $v\in D_{(\phi,\Iu)}$.
\end{proof}

The following is a Cuntz--Thomsen picture analog of the better-known concept of (stable) operator homotopy in Kasparov's original approach to $KK$-theory; see for example \cite[Section 17.2]{BlaKK} or \cite[Definition 2.1.16]{JensenThomsen}.

\begin{defi} \label{def:operator-homotopy}
Suppose $\beta$ is strongly stable.
Let $(\phi,\Iu), (\psi,\Iv): (A,\alpha)\to(\CM(B),\beta)$ be two cocycle representations.
We say $(\phi,\Iu)$ and $(\psi,\Iv)$ are \emph{operator homotopic}, if there exists a unitary $u\in\CU_0(D_{(\phi,\Iu)})$ such that $(\psi,\Iv)=\ad(u)\circ(\phi,\Iu)$.
We call $(\phi,\Iu)$ and $(\psi,\Iv)$ \emph{stably operator homotopic}, if there exists some cocycle representation $(\kappa,\Ix): (A,\alpha)\to (\CM(B),\beta)$ such that $(\phi,\Iu)\oplus(\kappa,\Ix)$ and $(\psi,\Iv)\oplus(\kappa,\Ix)$ are operator homotopic.
\end{defi}

It is evident from \autoref{prop:D-unitaries} that if $(\phi,\Iu)$ is stably operator homotopic to $(\psi,\Iv)$, then they necessarily form an $(\alpha,\beta)$-Cuntz pair, which is homotopic to a degenerate Cuntz pair in the sense of \autoref{def:KKG-Thomsen}.
In particular, the class in $KK^G(\alpha,\beta)$ represented by this pair of cocycle representations must vanish.
In the spirit of similar results in the literature focusing on more special cases, the main achievement in this section is that the converse also holds.
That is, for any anchored $(\alpha,\beta)$-Cuntz pair, its associated class in $KK^G(\alpha,\beta)$ is trivial precisely when the pair of cocycle representations is stably operator homotopic.

We recall Kasparov's technical theorem \cite[Theorem 1.4]{Kasparov88}.
It is worth pointing out that the proof given there (due to Higson \cite{Higson87_2}) is not as bad as the name suggests; it arises as a somewhat elaborate application of \autoref{lem:Kasparov} whose proof in Kasparov's original paper fits on two pages.
The way it is stated here is a slight reformulation compared to how it is stated in the reference.
There are two differences we want to point out.
Firstly, the element $M$ in our statement is the squareroot of the element $M_1$ and $N$ is the squareroot of $M_2$ in \cite[Theorem 1.4]{Kasparov88}.
Secondly, the map $\If$ below (denoted as $\phi$ in the original version) is allowed to have a more general domain as opposed to just the acting group $G$.
This is motivated by the fact that the only property about the domain ever used in the original proof of the theorem is that it is $\sigma$-compact.

\begin{theorem} \label{thm:Kasparov}
Let $G$ be a second-countable, locally compact group.
Let $B$ be a $\sigma$-unital \cstar-algebra and $\beta: G\curvearrowright B$ a continuous action.
Let $E_1, E_2\subset\CM(B)$ two $\sigma$-unital \cstar-subalgebras such that $E_1\cdot E_2\subseteq B$, and $E_1$ is $\beta$-invariant with $\beta|_{E_1}$ point-norm continuous.
Let $X$ be a locally compact, $\sigma$-compact Hausdorff space.
Suppose that $\If: X\to\CM(B)$ is a bounded strictly continuous map such that for all $a\in E_1$, the maps $a\cdot\If$ and $\If\cdot a$ take values in $B$ and are norm-continuous on $X$.
Let furthermore $\Delta\subseteq\CM(B)$ be a norm-separable subset with $[\Delta, E_1]\subseteq E_1$.
Then there exist two positive contractions $N,M\in\CM^\beta(B)$ with $N^2+M^2=\eins$ and satisfying
\[
ME_1\subseteq B,\ NE_2\subseteq B,\ [M,\Delta],\ [N,\Delta]\subseteq B,
\]
and such that the functions $N\cdot\If$ and $\If\cdot N$ define norm-continuous maps on $X$ with values in $B$.
\end{theorem}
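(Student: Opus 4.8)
This is Kasparov's technical theorem, and the plan is to run Higson's argument, whose only essential ingredient is \autoref{lem:Kasparov}. First observe that the asserted conclusions persist under replacing $\Delta$ (which is separable) by a countable dense subset; since $G$ is second-countable and locally compact, hence $\sigma$-compact, we also fix once and for all a countable dense $G_0\subseteq G$ and an increasing exhaustion $K_1\subseteq K_2\subseteq\cdots$ of $G$ by compact sets.

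The construction rests on two approximate units. Applying \autoref{lem:Kasparov} with $E_1$ in place of $B$ --- legitimate because $E_1$ is $\sigma$-unital, $\beta|_{E_1}$ is point-norm continuous, and $[\Delta,E_1]\subseteq E_1$ is what one feeds into the quasicentral approximate unit construction for $E_1$ --- and refining to a subsequence, we obtain an increasing approximate unit $(u_n)_{n\geq 0}$ of positive contractions in $E_1$ with $u_0=0$ and $u_{n+1}u_n=u_n$, quasicentral for $\Delta$ and asymptotically $\beta$-invariant uniformly over each $K_j$. Set $g_n:=(u_{n+1}-u_n)^{1/2}\in E_1$; then $g_ng_m=0$ whenever $|n-m|\geq 2$, and Hölder-type functional-calculus estimates (principally $\|x^{1/2}-y^{1/2}\|\leq\|x-y\|^{1/2}$) give $[g_n,d]\in E_1$ with $\|[g_n,d]\|\to 0$ and $\max_{g\in K_j}\|g_n-\beta_g(g_n)\|\to 0$. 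Next, \autoref{lem:Kasparov} applied to $B$ itself, with $D$ the separable \cstar-subalgebra of $\CM(B)$ generated by all $g_n$, by the chosen dense subset of $\Delta$, by (a separating countable piece of) $E_2$, and by $\{\If_g:g\in G_0\}$, yields an increasing, quasicentral, asymptotically $\beta$-invariant approximate unit $(h_m)$ of positive contractions in $B$. Finally choose $k(1)<k(2)<\cdots$ increasing so rapidly that the finitely many norms $\|(\eins-h_{k(n)})g_n\|$, $\|[h_{k(n)},d]\|$ and $\max_{g\in K_j}\|h_{k(n)}-\beta_g(h_{k(n)})\|$ relevant at stage $n$ are each $<2^{-n}$, and put
\[
N:=\sum_n g_n(\eins-h_{k(n)})g_n,\qquad M:=(\eins-N^2)^{1/2};
\]
the first sum converges strictly to a positive contraction $0\leq N\leq\eins$ (its partial sums are dominated by $\sum_{n\leq m}g_n^2=u_{m+1}\leq\eins$), and $M\geq 0$ with $M^2+N^2=\eins$.

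It remains to verify the properties of $N$; those of $M$ then follow, since $\CM^\beta(B)$ is closed under continuous functional calculus and since $ME_1\subseteq B$ is a consequence of $(\eins-N^2)E_1\subseteq B$ via the standard fact that $x^\ast x\in B$ forces $x\in B$. For $a\in E_1$ one has $\sum_n g_n^2a=a$ in norm (as $\sum_{n\leq m}g_n^2=u_{m+1}$), whence $Na-a=-\sum_n g_nh_{k(n)}g_na$, a norm-convergent sum of elements of $B$; this yields $(\eins-N^2)E_1\subseteq B$. For $b\in E_2$ one has $g_nb\in E_1E_2\subseteq B$, so each summand $g_n(\eins-h_{k(n)})g_nb$ lies in $B$, and by the choice of $k(n)$ and the two-fold orthogonality of the $g_n$ the partial sums are norm-Cauchy; hence $Nb\in B$ and $NE_2\subseteq B$. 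The statement $[N,\Delta]\subseteq B$, and the norm-continuity together with $B$-valuedness of $g\mapsto N\If_g$ and $g\mapsto\If_gN$ (following the same scheme as for $E_2$, using the stated continuity hypotheses on $\If$, the norm-continuous dependence $g\mapsto g_n\If_g$, and $\sigma$-compactness of $G$ to keep estimates uniform over each $K_j$), are obtained the same way: after peeling off the manifestly norm-summable terms that already lie in $B$, the residual, a priori merely $\CM(B)$-valued, contributions telescope --- for instance $[N,d]$ equals an element of $B$ plus $\sum_n[g_n^2,d]$, whose partial sums $[u_{m+1},d]$ tend to $0$ in norm by quasicentrality, so that $[N,d]\in B$. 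The same device gives $N\in\CM^\beta(B)$: a term-by-term expansion produces $N-\beta_g(N)=(\text{element of }B)+\sum_n\big((g_n-\beta_g(g_n))g_n+\beta_g(g_n)(g_n-\beta_g(g_n))\big)$, and the last sum telescopes to $\lim_m\big(u_{m+1}-\beta_g(u_{m+1})\big)=0$ by asymptotic invariance --- this is where the convention $u_0=0$ enters --- while the point-norm continuity of $\beta$ on $\CM^\beta(B)$ supplies the remaining continuity assertion.

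The main obstacle is essentially combinatorial: one has to orchestrate the two approximate units and the function $k(\cdot)$ so that a \emph{single} positive contraction $N$ is simultaneously a unit on $\overline{E_1}$, annihilates $\overline{E_2}$ and the image of $\If$ modulo $B$, commutes with $\Delta$ modulo $B$, and is fixed by $\beta$ modulo $B$, with the $\If$-estimates uniform over compact subsets of $G$. Within this, the genuinely delicate points are the functional-calculus passage from $u_{n+1}-u_n$ to $g_n=(u_{n+1}-u_n)^{1/2}$, which must not destroy near-centrality or near-invariance (whence the Hölder estimate), the correct interaction of the relevant approximate-unit elements with the ideal $B$, and the verification that the various residual error terms reassemble into honest elements of $B$ rather than merely of $\CM(B)$ --- which is exactly what the telescoping identities above accomplish.
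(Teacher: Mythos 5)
The paper does not prove \autoref{thm:Kasparov}: it is recalled verbatim (modulo taking square roots) from \cite[Theorem 1.4]{Kasparov88}, with the proof attributed in a footnote to Higson. There is therefore no internal argument to compare against, and the following assesses the proposal on its own terms. Your plan has the expected shape of the Higson/Kasparov argument --- two interleaved quasicentral approximate units, one in $E_1$ and one in $B$, combined into a series $N=\sum_n g_n(\eins-h_{k(n)})g_n$ --- and the parts treating $ME_1\subseteq B$, $NE_2\subseteq B$, $[N,\Delta]\subseteq B$, and $N\in\CM^\beta(B)$ are in the right spirit. But there are already technical slips there: you only arrange $\|[g_n,d]\|\to 0$ when for the norm-convergence of the error series one needs to refine $(u_n)$ until these commutators are summable; and the requirement $\|(\eins-h_{k(n)})g_n\|<2^{-n}$ cannot be arranged at all, since $g_n$ lies in $E_1\subset\CM(B)$ and generally not in $B$, while $(h_m)$ is an approximate unit only of $B$. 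What can be made small is $\|(\eins-h_{k(n)})g_n c\|$ for $c$ in countable dense subsets of $E_2$ (as $g_n E_2\subseteq B$), and that is what $NE_2\subseteq B$ actually uses.

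The genuine gap is in the $\If$-clause. You assert that the norm-continuity and $B$-valuedness of $g\mapsto N\If_g$ and $g\mapsto\If_g N$ follow ``the same scheme as for $E_2$'' with ``the residual contributions telescope.'' For $E_2$ the scheme works because every summand $g_n(\eins-h_{k(n)})g_n b$ already lies in $B$ (since $g_n b\in E_1E_2\subseteq B$) and can be made small; for $\If$ this fails, because $g_n^2\If_g$ is merely a bounded norm-continuous $\CM(B)$-valued function of $g$, not a $B$-valued one. The telescoping identity gives $\sum_{n\leq m}g_n^2\If_g=u_{m+1}\If_g$, which converges strictly to $p\If_g$ (with $p$ the strict limit of the $E_1$-approximate unit $(u_n)$), not to $0$ and not to an element of $B$. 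The decomposition you sketch therefore yields $N\If_g\equiv p\If_g\bmod B$, which generically lies outside $B$. Indeed, nothing in your construction pushes $N$ into $B$, yet $N\If_g,\If_gN\in B$ for a unitary-valued $\If$ --- precisely the case in the paper's application, \autoref{lem:homotopy-implies-operator-homotopy}, where $\If_g=\CE_0(\IV_g)\in\CU(\CM(B))$ --- would force $N=(N\If_g)\If_g^*\in B$. This is the most delicate part of Kasparov's technical theorem, and the proposal does not address it: it needs an argument genuinely different from the $E_2$-case, not the one-line appeal to the same scheme.
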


\begin{nota} \label{nota:op-homotopy}
The given representation and unitary path from \autoref{thm:basic-op-homotopy} will play an important role for the rest of this section, in the sense as we are about to specify now.
For a \cstar-algebra $B$, we will for $t\in [0,1]$ also (by slight abuse of notation) denote by $\ev_t: B[0,1]\to B$ the obvious evaluation map.
For a given separable infinite-dimensional Hilbert space $\CH$, we keep in mind that $\CB(\CH)=\CM(\CK(\CH))$.
Given an action $\beta: G\curvearrowright B$, we may tensor the representation from \autoref{thm:basic-op-homotopy} with the identity map on $B$ and the corresponding unitary path with the unit of $\CM(B)$ to get the equivariant representation
\[
\theta\otimes\id_B: (B[0,1],\beta[0,1])\to (\CM(\CK(\CH)\otimes B),\id\otimes\beta)
\] 
and a norm-continuous unitary path 
\[
w\otimes\eins: [0,1]\to\CU(\CM(\CK(\IC\oplus\CH)\otimes B))^{\id\otimes\beta}
\]
with $w_0=\eins$ such that obviously
\[
(\ev_1\oplus\theta)\otimes\id_B = \ad(w_1\otimes\eins)\circ\big( (\ev_0\oplus\theta)\otimes\id_B \big)
\]
and
\[
\big[ w_t\otimes\eins , \big( (\ev_0\oplus\theta)\otimes\id_B \big)(B[0,1]) \big] \subseteq \CK(\IC\oplus\CH)\otimes B,\quad t\in [0,1].
\]
Now let us additionally assume that $\beta$ is strongly stable.
Choose a sequence $r_n\in\CM(B)^\beta$ of isometries for $n\geq 0$ such that $\eins=\sum_{n=0}^\infty r_n r_n^*$ in the strict topology.
Let us also define the isometry $r_\infty=\sum_{k=0}^\infty r_{k+1}r_k^*\in\CM(B)^\beta$, which then fits into the equation $r_0r_0^*+r_\infty r_\infty^*=\eins$.
Let $\set{e_{k,l} \mid k,l\geq 0}\subset\CK(\IC\oplus\CH)$ be a set of generating matrix units such that $e_{0,0}$ is the orthgonal projection onto $\IC\oplus 0$.
Then $\set{e_{k,l}\mid k,l\geq 1}$ necessarily generates the \cstar-subalgebra $\CK(\CH)$.
We have two equivariant isomorphisms
\[
\Lambda_0 : \CK(\IC\oplus\CH)\otimes B\to B,\quad \Lambda_1: \CK(\CH)\otimes B\to B
\]
determined by the formulas 
\[
\Lambda_0(e_{k,l}\otimes b)=r_k b r_l^* \quad (k,l\geq 0),\text{ and } \Lambda_1(e_{k,l}\otimes b)=r_{k-1} b r_{l-1}^* \quad (k,l\geq 1).
\]
One has the identity $r_\infty\Lambda_1(e_{k,l}\otimes b)r_\infty^*=\Lambda_0(e_{k,l}\otimes b)$ for all $k,l\geq 1$, which implies $r_\infty\Lambda_1(\_)r_\infty^*=\Lambda_0(0\oplus\id_{\CK(\CH)\otimes B})$.
We extend $\Lambda_0$ and $\Lambda_1$ to equivariant isomorphisms between the multiplier algebras as well.

We consider the non-degenerate equivariant $*$-homomorphism 
\[
\theta^B=\Lambda_1\circ(\theta\otimes\id_B): B[0,1] \to \CM(B)
\] 
and the norm-continuous unitary path 
\[
w^B_t=\Lambda_0(w_t\otimes\eins)\in\CM(B)^\beta,\quad t\in [0,1].
\]
We then observe for all $f\in B[0,1]$
\[
\begin{array}{cl}
\multicolumn{2}{l}{ \Lambda_0\circ\big( (\ev_t\oplus\theta)\otimes\id_B \big)(f) }\\
=& \dst \Lambda_0(e_{0,0}\otimes f(t)) + \Lambda_0\big( 0\oplus(\theta\otimes\id_B)(f) \big)  \\
=& \dst r_0 f(t) r_0^* + r_\infty\Lambda_1((\theta\otimes\id_B)(f))r_\infty^*  \\
=& (\ev_t\oplus_{r_0,r_\infty} \theta^B)(f).
\end{array}
\]
In summary, we have used \autoref{thm:basic-op-homotopy} and the strong stability of $\beta$ to construct a non-degenerate equivariant $*$-homomorphism $\theta^B: (B[0,1],\beta[0,1])\to (\CM(B),\beta)$ and a norm-continuous unitary path $w^B: [0,1]\to\CU(\CM(B)^\beta)$ with $w^B_0=\eins$ and such that
\[
\ev_1\oplus_{r_0,r_\infty} \theta^B = \ad(w_1^B)\circ(\ev_0\oplus_{r_0,r_\infty} \theta^B)
\]
and
\[
\Big[ w^B_t, (\ev_0\oplus_{r_0,r_\infty} \theta^B)(B[0,1]) \Big] \subseteq B,\quad t\in [0,1].
\]
This witnesses the fact that the endpoint evaluation maps $\ev_0,\ev_1: B[0,1]\to B$ are stably operator homotopic.
The construction outlined above depends on the choice of the isometries $r_j$, but only up to equivalence with a unitary in $\CU(\CM(B)^\beta)$.
We will subsequently abuse notation and simply write $w$ in place of $w^B$.
We will also write $\CE_i=\ev_i\oplus_{r_0,r_\infty}\theta^B$ for $i=0,1$.
Each of $\ev_i$, $\theta^B$ and $\CE_i$ is a non-degenerate $\beta$-equivariant $*$-homomorphism from $B[0,1]$ to $\CM(B)$.
In particular they extend uniquely to unital equivariant $*$-homomorphisms from $\CM(B[0,1])$ to $\CM(B)$ that are strictly continuous on bounded sets.
We also denote their extensions by $\ev_i$, $\theta^B$ and $\CE_i$.
\end{nota}

The following technical lemma is the key ingredient in the main result of this section, \autoref{thm:homotopy-implies-operator-homotopy}.
Its punny name is a not so subtle nod to the circumstances related to the lemma's discovery.
We record the statement in a somewhat more general and explicit form than we need it for the rest of the paper, keeping in mind potential further applications.
For example, the lemma below could easily be used to not only prove the principle ``homotopy implies stable operator homotopy'', but also its appropriate analogs in other variants of the (equivariant) $KK$-groups, such as the nuclear or ideal-related versions.

\begin{lemma}[KKarantine lemma] \label{lem:homotopy-implies-operator-homotopy}
Suppose that $\beta$ is strongly stable.
Let $\big( (\Phi,\IU), (\Psi,\IV) \big)$ be an $(\alpha, \beta[0,1])$-Cuntz pair such that 
\[
(\phi,\Iu) :=\ev_0\circ(\Phi,\IU)=\ev_1\circ(\Phi,\IU)=\ev_0\circ(\Psi,\IV).
\]
Define $\ (\psi,\Iv):=\ev_1\circ(\Psi,\IV)$ and let $\mathcal E_0$ and $\theta^B$ be as in \autoref{nota:op-homotopy}. Then
\[
(\kappa,\Ix) :=\theta^B\circ(\Psi,\IV)\oplus\CE_0\circ(\Phi,\IU) \colon (A,\alpha) \to (\CM(B),\beta)
\]
is a cocycle representation, and $(\phi, \Iu) \oplus (\kappa, \Ix)$ and $(\psi,\Iv) \oplus (\kappa, \Ix)$ are operator homotopic.
\end{lemma}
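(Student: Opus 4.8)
The plan is to exhibit an explicit unitary in $\CU_0(D_{(\phi,\Iu)\oplus(\kappa,\Ix)})$ conjugating $(\phi,\Iu)\oplus(\kappa,\Ix)$ onto $(\psi,\Iv)\oplus(\kappa,\Ix)$. The building block is the path $w=w^B$ from \autoref{nota:op-homotopy} together with its endpoint evaluations $\CE_0,\CE_1$, which satisfy $\CE_1=\ad(w_1)\circ\CE_0$ and $[w_t,\CE_0(B[0,1])]\subseteq B$. First I would unwind what $(\phi,\Iu)\oplus(\kappa,\Ix)$ and $(\psi,\Iv)\oplus(\kappa,\Ix)$ actually are. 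Using that $\ev_0\circ(\Phi,\IU)=\ev_1\circ(\Phi,\IU)=\ev_0\circ(\Psi,\IV)=(\phi,\Iu)$ and $\ev_1\circ(\Psi,\IV)=(\psi,\Iv)$, and that $\CE_i=\ev_i\oplus_{r_0,r_\infty}\theta^B$, one computes
\[
(\phi,\Iu)\oplus(\kappa,\Ix)=\ev_0\circ(\Psi,\IV)\oplus\theta^B\circ(\Psi,\IV)\oplus\CE_0\circ(\Phi,\IU),
\]
and by regrouping the first two summands via the Cuntz sum $\ev_0\oplus_{r_0,r_\infty}\theta^B=\CE_0$,
\[
(\phi,\Iu)\oplus(\kappa,\Ix)\ \cong\ \CE_0\circ(\Psi,\IV)\oplus\CE_0\circ(\Phi,\IU)\ =\ \CE_0\circ\big((\Psi,\IV)\oplus(\Phi,\IU)\big),
\]
up to conjugation by a fixed unitary in $\CM(B)^\beta$ coming from re-associating Cuntz sums. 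Similarly, using $\CE_1=\ev_1\oplus_{r_0,r_\infty}\theta^B$ and $\ev_1\circ(\Psi,\IV)=(\psi,\Iv)$ together with $\ev_0\circ(\Phi,\IU)=\ev_1\circ(\Phi,\IU)=(\phi,\Iu)$,
\[
(\psi,\Iv)\oplus(\kappa,\Ix)\ \cong\ \CE_1\circ(\Psi,\IV)\oplus\CE_0\circ(\Phi,\IU).
\]
To fix the second summand as well, I would note $\CE_0\circ(\Phi,\IU)=\ev_0\circ(\Phi,\IU)\oplus_{r_0,r_\infty}\theta^B\circ(\Phi,\IU)$ and $\CE_1\circ(\Phi,\IU)=\ev_1\circ(\Phi,\IU)\oplus_{r_0,r_\infty}\theta^B\circ(\Phi,\IU)$ have the same underlying $\ast$-homomorphism and cocycle since $\ev_0\circ\Phi=\ev_1\circ\Phi$ and $\ev_0\circ\IU=\ev_1\circ\IU$; hence $\CE_0\circ(\Phi,\IU)=\CE_1\circ(\Phi,\IU)$ and
\[
(\psi,\Iv)\oplus(\kappa,\Ix)\ \cong\ \CE_1\circ\big((\Psi,\IV)\oplus(\Phi,\IU)\big).
\]

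Now set $(\Theta,\IW):=(\Psi,\IV)\oplus(\Phi,\IU)$, a cocycle representation $(A,\alpha)\to(\CM(B[0,1]),\beta[0,1])$; since $\big((\Phi,\IU),(\Psi,\IV)\big)$ is an $(\alpha,\beta[0,1])$-Cuntz pair, $\Theta(A)$ and $\IW_G$ lie in $\CM^{\beta[0,1]}(B[0,1])$ modulo... actually more simply, $\Theta$ is a $\ast$-homomorphism into $\CM(B[0,1])$ and $\IW$ a $\beta[0,1]$-cocycle. The candidate conjugating unitary is $W:=w_1\in\CU(\CM(B)^\beta)$, pushed through: I claim $\ad(w_1)$ intertwines $\CE_0\circ(\Theta,\IW)$ and $\CE_1\circ(\Theta,\IW)$. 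For the $\ast$-homomorphism part this is $\ad(w_1)\circ\CE_0=\CE_1$ on $\CM(B[0,1])$ restricted to $\Theta(A)$, which holds by \autoref{thm:basic-op-homotopy} (extended as in \autoref{nota:op-homotopy}). For the cocycle part one checks $\CE_1\circ\IW=w_1\,(\CE_0\circ\IW)\,\beta(w_1)^\ast$ using that $w_1\in\CM(B)^\beta$ so $\beta_g(w_1)=w_1$ and $\CE_1\circ\IW_g=\ad(w_1)(\CE_0\circ\IW_g)=w_1(\CE_0\circ\IW_g)w_1^\ast=w_1(\CE_0\circ\IW_g)\beta_g(w_1)^\ast$, exactly the formula defining $\ad(w_1)\circ(\CE_0\circ(\Theta,\IW))$ as a cocycle representation. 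Thus after the fixed re-association unitaries, $(\psi,\Iv)\oplus(\kappa,\Ix)=\ad(w_1)\circ\big((\phi,\Iu)\oplus(\kappa,\Ix)\big)$ up to conjugation by a unitary in $\CM(B)^\beta\subseteq\CM(B)^{\beta^{\Iu\oplus\Ix}}$.

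It remains to verify $w_1$ (and the re-association unitaries) lie in $\CU_0(D_{(\phi,\Iu)\oplus(\kappa,\Ix)})$. Membership in $D_{(\phi,\Iu)\oplus(\kappa,\Ix)}$ means: $w_1$ commutes with $\big((\phi,\Iu)\oplus(\kappa,\Ix)\big)(A)$ modulo $B$, and $w_1-\beta^{\Iu\oplus\Ix}_g(w_1)\in B$. The second condition follows from $w_1\in\CM(B)^\beta$ and the cocycle intertwining just established, as $\beta^{\Iu\oplus\Ix}_g(w_1)=(\Iu_g\oplus\Ix_g)\beta_g(w_1)(\Iu_g\oplus\Ix_g)^\ast$ and one computes this differs from $w_1$ by an element of $B$ using the Cuntz-pair property $\IU-\IV\in B[0,1]$. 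The first condition is precisely \eqref{eq:wtcommute} applied at $t=1$, transported through $\CE_0$: $[w_1,\CE_0((\Theta,\IW)(A))]\subseteq[w_1,\CE_0(B[0,1])]\subseteq B$ — here I use that $\CE_0\circ\Theta$ maps into (the multiplier-algebra extension generated by) $\CE_0(B[0,1])$, or more carefully that $[w_1,\CE_0(\Theta(a))]\subseteq B$ because $w_1$ was built precisely to commute mod compacts with the relevant representation. Finally, $w_1\in\CU_0$ of this algebra because \autoref{thm:basic-op-homotopy} provides the norm-continuous path $w_t$ with $w_0=\eins$ satisfying \eqref{eq:wtcommute} for \emph{all} $t$, so the whole path stays inside $D_{(\phi,\Iu)\oplus(\kappa,\Ix)}$ (the fixed-point and commutator conditions hold at every $t$ by the same arguments), giving a norm-path from $\eins$ to $w_1$; the re-association unitaries lie in $\CM(B)^\beta\cap(\text{everything's commutant})$ and are connected to $\eins$ by standard rotation homotopies as in the footnote to \autoref{prop:Cuntz-pair-small-perturbation}. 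The main obstacle I anticipate is bookkeeping: tracking the several re-association unitaries from regrouping Cuntz sums and confirming each genuinely lies in $\CU_0(D_{(\phi,\Iu)\oplus(\kappa,\Ix)})$ rather than merely in $\CU(\CM(B)^\beta)$, and making sure the identity $\CE_0\circ(\Phi,\IU)=\CE_1\circ(\Phi,\IU)$ is used in exactly the right place so that the second Cuntz-summand matches on both sides before $w_1$ is applied.
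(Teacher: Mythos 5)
Your reduction to the single unitary $w_1$ and the claim that the path $(w_t)_{t\in[0,1]}$ already lies inside $D_{(\phi,\Iu)\oplus(\kappa,\Ix)}$ has a genuine gap, and it is in fact the central difficulty that the paper's proof is designed to overcome. The property carried by $w_t$ from \autoref{thm:basic-op-homotopy} and \autoref{nota:op-homotopy} is that $[w_t,\CE_0(B[0,1])]\subseteq B$, i.e.\ $w_t$ commutes modulo $B$ with the image of $B[0,1]$ itself under $\CE_0$. What you need is $[w_t,\CE_0\circ\Theta(A)]\subseteq B$ and $[w_t,\CE_0\circ\IW_g]\in B$, and the elements $\Theta(a),\IW_g$ live in $\CM(B[0,1])$, not in $B[0,1]$. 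Commutation modulo $B$ with $\CE_0(B[0,1])$ does \emph{not} pass to the strictly extended map $\CE_0$ on $\CM(B[0,1])$: the image $\bar w_t\in\CQ(B)$ commutes with $\overline{\CE_0(B[0,1])}$, but that says nothing about commuting with $\overline{\CE_0(\CM(B[0,1]))}$, and your hedge ``because $w_1$ was built precisely to commute mod compacts with the relevant representation'' identifies the wrong representation: $w$ was built for $(\ev_0\oplus\theta)(\CC[0,1])$, not for the strictly extended map on multipliers. So the assertion that ``the whole path stays inside $D_{(\phi,\Iu)\oplus(\kappa,\Ix)}$'' is unjustified, and without it you have no path in $\CU_0(D_{(\phi,\Iu)\oplus(\kappa,\Ix)})$.

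The paper's proof accepts that the naive path $w_t$ (or $w_t\oplus w_t^*$) does not stay in $D_{(\theta,\Iy)}$ and instead produces a different path to $w_1\oplus w_1^*$. It invokes Kasparov's technical theorem (\autoref{thm:Kasparov}) to build a unitary $U=\Xi\begin{pmatrix}N&M\\-M&N\end{pmatrix}$ that approximately \emph{swaps} the two Cuntz summands modulo $B$ on the relevant data. The path it uses is $V_t$ passing through $\exp(t\log U)$, then $(\eins\oplus w_{t-1}^*)U(w_{t-1}\oplus\eins)$, then back down; the point of the swap is that it converts the problematic commutator $[\ad(w_t)-\id]\circ\CE_0\circ\Psi(a)$ on one summand into $[\ad(w_t)-\id]\circ\CE_0\circ\Phi(a)$ on the other, and the difference is controlled by the Cuntz-pair condition $(\Psi-\Phi)(a)\in B[0,1]$, where the known commutation property of $w_t$ actually applies. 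This is why the paper conjugates by $w_1\oplus w_1^*$ rather than the global $w_1$: the $\oplus$-decomposition is essential to set up the swap. Your decomposition of $(\phi,\Iu)\oplus(\kappa,\Ix)$ and $(\psi,\Iv)\oplus(\kappa,\Ix)$ via $\CE_0,\CE_1$ applied to $(\Theta,\IW)=(\Psi,\IV)\oplus(\Phi,\IU)$ is essentially correct and mirrors the paper's setup, and the identity $\CE_0\circ(\Phi,\IU)=\CE_1\circ(\Phi,\IU)$ is used in the right way; the ``bookkeeping'' you flag as the main obstacle is actually the easy part. What is missing is the entire mechanism for producing a path in $D_{(\theta,\Iy)}$, which requires a genuinely new input (Kasparov's technical theorem and the swap trick), not the path $(w_t)$ as is.
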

\begin{proof}
That $(\kappa, \Ix)$ is a cocycle representation follows since $\theta^B$ and $\CE_0$ are unital, equivariant and strictly continuous on bounded sets.

For the rest of the proof, we shall use the notation $x\equiv_B y$ for two multipliers $x,y\in\CM(B)$ to mean that $x-y\in B$.
We adopt the choices and notation from the last paragraph in \autoref{nota:op-homotopy}.
It shall be understood that Cuntz sums of elements are formed with the pair of isometries $r_0$ and $r_\infty$ as defined there, unless we specify otherwise. 
Then $\Xi: M_2(\CM(B))\to \CM(B)$ given by
\[
\Xi\matrix{ a & b \\ c & d } = r_0ar_0^* + r_0 b r_\infty^* + r_\infty c r_0^* + r_\infty d r_\infty^*
\]
is an (equivariant) isomorphism with $\Xi(M_2(B))=B$.
In particular one has $b_1\oplus b_2=\Xi(\diag(b_1,b_2))$ for all $b_1,b_2\in\CM(B)$.
Define
\[
(\theta, \Iy) := (\phi,\Iu)\oplus(\kappa,\Ix) =\CE_0\circ(\Psi,\IV) \oplus \CE_0\circ(\Phi,\IU)
\]
and note that
\[
(\psi,\Iv)\oplus(\kappa,\Ix)=\CE_1\circ(\Psi,\IV) \oplus \CE_1\circ(\Phi,\IU) = \ad(w_1 \oplus w_1^*)\circ (\theta,\Iy).
\]
(Strictly speaking, the left-most Cuntz addition ``$\oplus$'' appearing in these particular instances here has to be performed with different isometries than the pair $r_0, r_\infty$ chosen before to achieve these equations.)
Here we use $\ad(w_1)\circ\CE_0=\CE_1$ and $\CE_0\circ(\Phi,\IU)=\CE_1\circ(\Phi,\IU)$.
We claim that the unitary $w_1\oplus w_1^*$ is homotopic to the unit within $D_{(\theta,\Iy)}$, which will complete the proof.
We shall now construct such a homotopy.

We first observe that since $\CE_0$ is equivariant, unital and strictly continuous on bounded sets, the map $\CE_0\circ\IV: G\to\CU(\CM(B))$ is a strictly continuous $\beta$-cocycle.
Define a subset of $\CM(B)$ via
\[
\Delta=\CE_0\circ\Psi(A)\cup\set{w_t}_{t\in [0,1]},
\]
which is norm-separable since $A$ is separable and $w$ is norm-continuous. 
Define \cstar-subalgebras of $\CM(B)$ via
\[
E_1=\CE_0(B[0,1])+B
\]
and
\[
E_2=\cstar\Big( w_t(\CE_0\circ\Psi)(a)w_t^*-(\CE_0\circ\Psi)(a)  \mid a\in A,\ t\in [0,1] \Big).
\]
Furthermore, we define a map
\[
\If: [0,1]\times G\to\CM(B),\quad\If(t,g)=\CE_0(\IV_g)w_t\CE_0(\IV_g)^*-w_t,
\]
which is strictly continuous since multiplication is strictly continuous on bounded sets. 
We clearly have that $E_2$ is separable and $E_1$ is $\sigma$-unital.
$E_1$ is also invariant under the cocycle perturbed action $\beta^{\CE_0(\IV)}$ and $\beta^{\CE_0(\IV)}|_{E_1}$ is point-norm continuous.
By the choice of the unitaries $w_t$, they commute with elements of $E_1$ modulo $B$, hence we see that $[\Delta,E_1]\subseteq E_1$.
It also follows for arbitrary elements $z\in\CM(B[0,1])$ and $b\in B[0,1]$ that
\[
(w_t\CE_0(z)w_t^*-\CE_0(z))\CE_0(b) = w_t\CE_0(zb)w_t^*-\CE_0(zb)+w_t\CE_0(z)[w_t^*,\CE_0(b)] \ \in \ B.
\]
If $z$ is additionally assumed to be a unitary, then further
\[
\begin{array}{cl}
\multicolumn{2}{l}{ (\CE_0(z)w_t\CE_0(z)^*-w_t)\CE_0(b) }\\ 
=& \CE_0(z)(w_t\CE_0(z)^*w_t^*-\CE_0(z)^*) w_t\CE_0(b) \\
=& \CE_0(z)(w_t\CE_0(z)^*w_t^*-\CE_0(z)^*)[w_t,\CE_0(b)] \\
& +\CE_0(z)(w_t\CE_0(z)^*w_t^*-\CE_0(z)^*)\CE_0(b)w_t \ \in \ B.
\end{array}
\]
Analogously one has also $\CE_0(b)(\CE_0(z)w_t\CE_0(z)^*-w_t)\in B$.
These observations immediately imply
\[
E_1\cdot E_2 \subseteq B \quad\text{and}\quad E_1\cdot\If([0,1]\times G)\cup\If([0,1]\times G)\cdot E_1\subseteq B.
\]
Because the $\beta[0,1]$-cocycle $\IV$ is strictly continuous and the path $w$ was norm-continuous, the maps of the form $e\cdot\If$ and $\If\cdot e$ are norm-continuous on $[0,1]\times G$ for all $e\in E_1$. In fact, this is an easy consequence of the fact (which is elementary to prove), that if $X$ is a topological space, $f: X \to B$ is norm-continuous and $g: X \to \CM(B)$ is stricly continuous and bounded, then the maps $x\mapsto f(x)g(x)$ and $x\mapsto g(x) f(x)$ are norm-continuous. 

In particular, this allows us to apply Kasparov's technical \autoref{thm:Kasparov} to this setup, with the cocycle perturbed $G$-action $\beta^{\CE_0(\IV)}$ on $B$.
We find positive contractions $N,M\in\CM^{\beta^{\CE_0(\IV)}}(B)$ with $N^2+M^2=\eins$ such that
\begin{equation} \label{eq:op-homo:NM}
ME_1\subseteq B,\ NE_2\subseteq B,\ [M,\Delta],\ [N,\Delta]\subseteq B,
\end{equation}
and moreover the maps $N\cdot\If$ and $\If\cdot N$ are norm-continuous maps on $[0,1]\times G$ with values in $B$.
Note that $M$ and $N$ commute, and that $[M,E_2] \subseteq B$ and $[N,E_1] \subseteq B$. 
We consider the unitary
\[
U=\Xi\matrix{ N & M \\ -M & N } \in \CU(\CM(B)).
\]
A trivial computation shows for all $x,y\in\CM(B)$ that
\begin{equation} \label{eq:op-homo:conjugate-U}
U\Xi\matrix{ x & 0 \\ 0 & y }U^* = \Xi\matrix{ NxN+MyM & MyN-NxM \\ NyM-MxN & MxM+NyN }.
\end{equation}
If one has $y\in\Delta$ and $y-x\in E_2$, then condition \eqref{eq:op-homo:NM} implies that
\begin{equation} \label{eq:op-homo:flip1}
U\Xi\matrix{ x & 0 \\ 0 & y }U^* \equiv_B \Xi\matrix{ y & 0 \\ 0 & x }.
\end{equation}
In particular it follows for any $a\in A$ and any $t\in [0,1]$ that the pair of elements $x=\ad(w_t)\circ\CE_0\circ\Psi(a)$ and $y=\CE_0\circ\Psi(a)$ satisfies this property.
Furthermore, since $N$ acts like a unit on $E_1$ modulo $B$, we have that $U$ acts like a unit on $\Xi(M_2(E_1))$ modulo $B$.
Keeping in mind that $\CE_0\circ(\Phi-\Psi)(A)\subseteq E_1$, we compute for all $a\in A$ and all $t\in [0,1]$ that one has
\[
\begin{array}{cl}
\multicolumn{2}{l}{ U \Xi\matrix{ \ad(w_t)\circ\CE_0\circ\Psi(a) & 0 \\ 0 & \CE_0\circ\Phi(a) } U^* }\\
\equiv_{\makebox[0pt]{\footnotesize\hspace{2mm}$B$}}& \Xi\matrix{ 0 & 0 \\ 0 & \CE_0\circ(\Phi-\Psi)(a) } + U \Xi\matrix{ \ad(w_t)\circ\CE_0\circ\Psi(a) & 0 \\ 0 & \CE_0\circ\Psi(a) } U^* \\
\stackrel{\eqref{eq:op-homo:flip1}}{\equiv}_{\makebox[0pt]{\footnotesize $B$}}& \Xi\matrix{ \CE_0\circ\Psi(a) & 0 \\ 0 & [\ad(w_t)-\id]\circ\CE_0\circ\Psi(a) + \CE_0\circ\Phi(a) }
\end{array}
\]
By the choice of the unitary path $w$, we have that $[\ad(w_t)-\id]\circ\CE_0(B[0,1])\subseteq B$.
Since $\Phi(a)$ and $\Psi(a)$ agree modulo $B[0,1]$, we have that 
\[
[\ad(w_t)-\id]\circ\CE_0\circ(\Psi-\Phi)(a)\in B,
\] 
and therefore, the last expression above agrees modulo $B$ with
\[
\Xi\matrix{ \CE_0\circ\Psi(a) & 0 \\ 0 & \ad(w_t)\circ\CE_0\circ\Phi(a) }.
\]
Since $a\in A$ was arbitrary, we get for all $t\in [0,1]$ that the unitary 
\[
(\eins\oplus w_t^*)U(w_t\oplus\eins) 
\] 
commutes with the range of the map $\theta=\CE_0\circ\Psi\oplus\CE_0\circ\Phi$ modulo $B$.

Now let $t\in [0,1]$ be given. 
We want to show that the unitary $(\eins\oplus w_t^*)U(w_t\oplus\eins)$ is invariant under the cocycle perturbed action $\beta^\Iy$ modulo $B$, where $\Iy=\CE_0\circ\IV\oplus\CE_0\circ\IU$.
Since $w_t\in\Delta$ commutes with both $N$ and $M$ modulo $B$, we can see that 
\begin{equation} \label{eq:op-homo:simplify-unitary}
(\eins\oplus w_t^*)U(w_t\oplus\eins)=\Xi\matrix{ Nw_t & M \\ -w_t^*M w_t & w_t^* N } \equiv_B \Xi\matrix{ Nw_t & M \\ -M & w_t^* N }.
\end{equation}
Recall that the elements $M, N$ are fixed by the action $\beta^{\CE_0(\IV)}$ modulo $B$.
Since $\IU$ and $\IV$ are assumed to agree modulo $B[0,1]$, the map $g\mapsto\IV_g\IU_g^*=\eins+(\IV_g-\IU_g)\IU_g^*$ takes values in $\eins+B[0,1]$ and hence $\CE_0(\IV_g\IU_g^*)\in (\eins+E_1)$.
Using furthermore that $w_t$ is fixed by $\beta$, we compute
\[
\renewcommand\arraystretch{1.5}
\begin{array}{cl}
\multicolumn{2}{l}{ \beta_g^\Iy\Big( (\eins\oplus w_t^*)U(w_t\oplus\eins) \Big)}\\
\stackrel{\eqref{eq:op-homo:simplify-unitary}}{\equiv}_{\makebox[0pt]{\footnotesize $B$}} & \Xi\matrix{ \CE_0(\IV_g) \beta_g(Nw_t) \CE_0(\IV_g)^* & \CE_0(\IV_g)\beta_g(M)\CE_0(\IU_g)^* \\ -\CE_0(\IU_g)\beta_g(M)\CE_0(\IV_g)^* & \CE_0(\IU_g)\beta_g(w_t^* N)\CE_0(\IU_g)^* } \\
\equiv_{\makebox[0pt]{\footnotesize\hspace{2mm}$B$}} & \Xi\matrix{ N\CE_0(\IV_g) w_t \CE_0(\IV_g)^* & M\CE_0(\IV_g\IU_g^*) \\ -\CE_0(\IU_g\IV_g^*)M & \CE_0(\IU_g)w_t^*\CE_0(\IV_g)^* N \CE_0(\IV_g\IU_g^*) } \\
\stackrel{\eqref{eq:op-homo:NM}}{\equiv}_{\makebox[0pt]{\footnotesize $B$}} & 
\Xi\matrix{ N\CE_0(\IV_g) w_t \CE_0(\IV_g)^* & M \\ -M & \CE_0(\IU_g)w_t^*\CE_0(\IV_g)^* N \CE_0(\IV_g\IU_g^*) } \\
= & \Xi\matrix{ N\CE_0(\IV_g) w_t \CE_0(\IV_g)^* & M \\ -M & \CE_0(\IU_g\IV_g^*) \big( \CE_0(\IV_g)w_t^*\CE_0(\IV_g)^* N \big) \CE_0(\IV_g\IU_g^*) }.
\end{array}
\]
We note that
\[
N\CE_0(\IV_g) w_t \CE_0(\IV_g)^*-Nw_t = N\If(t,g)\in B.
\]
Therefore
\[
\beta_g^\Iy\Big( (\eins\oplus w_t^*)U(w_t\oplus\eins) \Big) \equiv_B \Xi\matrix{ Nw_t & M \\ -M & \CE_0(\IU_g\IV_g^*) w_t^* N \CE_0(\IV_g\IU_g^*) }.
\]
Using again that $\CE_0(\IV_g\IU_g^*)\in (\eins+E_1)$, we have that both $w_t$ and $N$ commute with such an element modulo $B$, which leads to
\[
\renewcommand\arraystretch{1.5}
\begin{array}{ccl}
\beta_g^\Iy\Big( (\eins\oplus w_t^*)U(w_t\oplus\eins) \Big) 
& \equiv_{\makebox[0pt]{\footnotesize\hspace{2mm}$B$}} &
\Xi\matrix{ Nw_t & M \\ -M & w_t^* N } \\
& \stackrel{\eqref{eq:op-homo:simplify-unitary}}{\equiv}_{\makebox[0pt]{\footnotesize $B$}} &
(\eins\oplus w_t^*)U(w_t\oplus\eins).
\end{array}
\]
Due to all the properties we have verified for $U$ so far, we may finally conclude that
\[
(\eins\oplus w_t^*)U(w_t\oplus\eins) \in D_{(\theta,\Iy)},\quad t\in [0,1].
\]
Since $w_0=\eins$, this is in particular the case for $U$ itself. 
Moreover, since $\ad w_1 \circ \CE_0 = \CE_1$ and $\CE_0\circ (\Phi, \IU) = \CE_1\circ (\Phi, \IU) = \CE_0\circ (\Psi, \IV)$, it is immediate to check that $w_1\oplus\eins$ and $\eins\oplus w_1$ are elements of $D_{(\theta,\Iy)}$.

We can readily observe that $U+U^*\geq 0$, so the spectrum of $U$ is contained in the right-half circle.
Let us denote by $\log: \IC\setminus\IR^{-}\to\IC$ the standard holomorphic branch of the logarithm.
We then define a norm-continuous unitary path $V: [0,3]\to \CU(D_{(\theta,\Iy)})$ via
\[
V_t=\begin{cases} \exp(t\log(U)) &,\quad 0\leq t\leq 1 \\
(\eins\oplus w_{t-1}^*)U(w_{t-1}\oplus\eins) &,\quad 1\leq t\leq 2 \\
(\eins\oplus w_1^*) \exp( (3-t)\log(U) ) (w_1\oplus\eins) &,\quad 2\leq t\leq 3. \end{cases}
\]
Evidently we have $V_0=\eins$ and $V_3=w_1\oplus w_1^*$.
In summary, the cocycle representations $(\phi,\Iu)$ and $(\psi,\Iv)$ are indeed stably operator homotopic and the proof is complete.
\end{proof}

\begin{theorem} \label{thm:homotopy-implies-operator-homotopy}
Suppose $\beta$ is strongly stable.
Let $(\phi,\Iu), (\psi,\Iv): (A,\alpha) \to (\CM(B),\beta)$ be two cocycle representations that form an anchored $(\alpha,\beta)$-Cuntz pair.
If $[(\phi,\Iu), (\psi,\Iv)] = 0 \in KK^G(\alpha,\beta)$, then $(\phi,\Iu)$ and $(\psi,\Iv)$ are stably operator homotopic.
\end{theorem}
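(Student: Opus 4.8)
The plan is to reduce the statement to a single application of the KKarantine lemma (\autoref{lem:homotopy-implies-operator-homotopy}). That lemma needs as input a homotopy of Cuntz pairs of a rigid shape: an $(\alpha,\beta[0,1])$-Cuntz pair $\big((\Phi,\IU),(\Psi,\IV)\big)$ whose first leg $(\Phi,\IU)$ is a \emph{loop} at $(\phi,\Iu)$, i.e.\ $\ev_0\circ(\Phi,\IU)=\ev_1\circ(\Phi,\IU)=(\phi,\Iu)$, and whose second leg satisfies $\ev_0\circ(\Psi,\IV)=(\phi,\Iu)$ and $\ev_1\circ(\Psi,\IV)=(\psi,\Iv)$. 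So there are two things to do: first produce \emph{some} homotopy witnessing that $[(\phi,\Iu),(\psi,\Iv)]=0$, and then re-fold it into this shape.

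For the first step, I would observe that the trivial degenerate Cuntz pair $0=\big((0,\eins),(0,\eins)\big)$ is anchored (its associated cocycle pair is $(\eins,\eins)$) and, by \autoref{lem:equivalence-simplified}, represents the neutral element of $\IE^G(\alpha,\beta)/{\sim}$; hence $\pi([0])=0$ in $KK^G(\alpha,\beta)$. Since by hypothesis $x:=\big((\phi,\Iu),(\psi,\Iv)\big)$ is an anchored Cuntz pair with $\pi([x])=0=\pi([0])$, part~\ref{prop:KKG-homotopy:3} of \autoref{prop:KKG-homotopy} yields $x\sim_h 0$. By \autoref{def:KKG-Thomsen} this produces an element $\big((\Phi',\IU'),(\Psi',\IV')\big)\in\IE^G(\alpha,\beta[0,1])$ whose $\ev_0$-restriction is $0$ and whose $\ev_1$-restriction is $x$; pictured as a homotopy, it is a path $s\mapsto\big((\Phi'_s,\IU'_s),(\Psi'_s,\IV'_s)\big)$ of pairs of cocycle representations into $(\CM(B),\beta)$ with $(\Phi'_0,\IU'_0)=(\Psi'_0,\IV'_0)=(0,\eins)$, $(\Phi'_1,\IU'_1)=(\phi,\Iu)$, and $(\Psi'_1,\IV'_1)=(\psi,\Iv)$.

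For the second step, I would fold this homotopy across the midpoint. Over $[0,\tfrac12]$ take the \emph{degenerate} Cuntz pair whose two legs both equal the reparametrized reversal $s\mapsto(\Phi'_{1-2s},\IU'_{1-2s})$, running from $(\phi,\Iu)$ at $s=0$ down to $0$ at $s=\tfrac12$; over $[\tfrac12,1]$ take the reparametrized original homotopy $\big((\Phi'_{2s-1},\IU'_{2s-1}),(\Psi'_{2s-1},\IV'_{2s-1})\big)$, running from $0$ back up to $x$. The two halves agree at $s=\tfrac12$ (everything is $0$ there), so they glue to a pair of cocycle representations $(\Phi,\IU),(\Psi,\IV)\colon(A,\alpha)\to(\CM(B[0,1]),\beta[0,1])$ for which every slice is a Cuntz pair; thus $\big((\Phi,\IU),(\Psi,\IV)\big)\in\IE^G(\alpha,\beta[0,1])$. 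By construction $\ev_0\circ(\Phi,\IU)=\ev_1\circ(\Phi,\IU)=\ev_0\circ(\Psi,\IV)=(\phi,\Iu)$ and $\ev_1\circ(\Psi,\IV)=(\psi,\Iv)$, so \autoref{lem:homotopy-implies-operator-homotopy} applies and delivers a cocycle representation $(\kappa,\Ix)$ such that $(\phi,\Iu)\oplus(\kappa,\Ix)$ and $(\psi,\Iv)\oplus(\kappa,\Ix)$ are operator homotopic; by \autoref{def:operator-homotopy} this is exactly the assertion that $(\phi,\Iu)$ and $(\psi,\Iv)$ are stably operator homotopic.

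All of the genuinely difficult work is already contained in \autoref{lem:homotopy-implies-operator-homotopy} (via Kasparov's technical theorem and the construction of \autoref{nota:op-homotopy}), so the only point here that needs a little care is the bookkeeping in the second step: checking that the reparametrized, piecewise-glued families really form an element of $\IE^G(\alpha,\beta[0,1])$, i.e.\ that the concatenation is continuous in the point-strict topology and the concatenated cocycles are continuous in the topology of uniform strict convergence over compact subsets of $G$. This is routine, since $\CM(B[0,1])$-valued cocycle representations are stable under precomposition with continuous self-maps of $[0,1]$ and under gluing along coinciding endpoint data. I do not anticipate any further obstacle.
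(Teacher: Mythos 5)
Your argument is correct, and it does deliver the theorem from a single application of Lemma~\ref{lem:homotopy-implies-operator-homotopy}. You apply Proposition~\ref{prop:KKG-homotopy}\ref{prop:KKG-homotopy:3} to conclude that the anchored pair $x=\big((\phi,\Iu),(\psi,\Iv)\big)$ is homotopic to the trivial anchored pair $0$, and then fold that homotopy over its first leg to manufacture an $(\alpha,\beta[0,1])$-Cuntz pair with the rigid endpoint shape required by the KKarantine lemma. The seam-check you flag at the end is indeed routine: both halves meet at the zero cocycle representation, and both precomposition by continuous self-maps of $[0,1]$ and gluing along coinciding endpoint slices preserve membership in $\IE^G(\alpha,\beta[0,1])$.

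The paper takes a slightly shorter route that avoids the folding step altogether. Instead of comparing $x$ with $0$, it compares $x$ with the degenerate Cuntz pair $\big((\phi,\Iu),(\phi,\Iu)\big)$, which is also anchored (any degenerate element is, via the contraction of Lemma~\ref{lem:equivalence-simplified}\ref{lem:equivalence-simplified:1}) and vanishes in $KK^G$. Applying Proposition~\ref{prop:KKG-homotopy}\ref{prop:KKG-homotopy:3} to this choice of comparison pair produces a homotopy $\big((\Phi,\IU),(\Psi,\IV)\big)\in\IE^G(\alpha,\beta[0,1])$ from $\big((\phi,\Iu),(\phi,\Iu)\big)$ to $\big((\phi,\Iu),(\psi,\Iv)\big)$. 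Because both endpoints have first component $(\phi,\Iu)$, the first leg automatically satisfies $\ev_0\circ(\Phi,\IU)=\ev_1\circ(\Phi,\IU)=(\phi,\Iu)$, and the second leg satisfies $\ev_0\circ(\Psi,\IV)=(\phi,\Iu)$ and $\ev_1\circ(\Psi,\IV)=(\psi,\Iv)$ --- exactly the hypothesis of Lemma~\ref{lem:homotopy-implies-operator-homotopy}, on the nose. Your version trades this observation for the fold-and-reparametrize maneuver; it is harmless, but the paper's choice of comparison pair makes the rigid shape fall out for free.
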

\begin{proof}
Since $\big( (\phi,\Iu), (\psi,\Iv)\big), \big( (\phi,\Iu), (\phi,\Iu) \big) \in \IE^G(\alpha, \beta)$ are anchored and vanish in $KK^G(\alpha, \beta)$, it follows from \autoref{prop:KKG-homotopy} that they are homotopic.
So there is an $(\alpha, \beta[0,1])$-Cuntz pair $\big( (\Phi,\IU), (\Psi,\IV) \big)$ such that 
\[
(\phi,\Iu)=\ev_0\circ(\Phi,\IU)=\ev_1\circ(\Phi,\IU)=\ev_0\circ(\Psi,\IV) \text{ and } (\psi,\Iv)=\ev_1\circ(\Psi,\IV).
\]
\autoref{lem:homotopy-implies-operator-homotopy} implies that $(\phi, \Iu)$ and $(\psi,\Iv)$ are stably operator homotopic.
\end{proof}

%%%%%%%%%%%%%%%%%%%%%%%%%%%%%%%%%%%%%%%%%%%%%%%%%%%%%%

\section{Absorption of cocycle representations}
 
In this section, we consider various more notions of equivalence and subequivalence between cocycle representations $(A,\alpha)\to(\CM(B),\beta)$, some of which we are about to introduce.
These are similar (but not identical) to the notions of equivalence considered in \cite[Section 2]{Szabo21cc}.
Compared to the original work of Dadarlat--Eilers, the content of this section should be seen as a dynamical generalization of a number of technical lemmas in \cite[Subsection 2.3]{DadarlatEilers02} and those in some references invoked therein.
There are a lot of similarities to some results and their proofs in \cite{Thomsen05}, with the important difference that said reference only deals with genuinely equivariant representations.
The following is an analog of \cite[Definition 2.1]{DadarlatEilers01}.

\begin{defi} \label{def:various-equivalences}
Let $(\phi,\Iu), (\psi,\Iv): (A,\alpha)\to (\CM(B),\beta)$ be two cocycle representations.
We write $(\phi,\Iu)\asymp (\psi,\Iv)$, if there exists a norm-continuous path $u: [0,\infty)\to\CU(\CM(B))$ such that
	\begin{itemize}
  	\item $\psi(a) = \dst\lim_{t\to\infty} u_t\phi(a)u_t^*$ for all $a\in A$;
  	\item $\psi(a)-u_t\phi(a)u_t^* \in B$ for all $a\in A$ and $t\geq 0$;  
	\item $\dst\lim_{t\to\infty} \max_{g\in K}\ \| \Iv_g-u_t\Iu_g\beta_g(u_t)^* \| =0$ for all compact sets $K\subseteq G$;
  	\item The map $[(t,g)\mapsto \Iv_g-u_t\Iu_g\beta_g(u_t)^*]$ takes values in $B$.
  \end{itemize}
\end{defi}
In view of \autoref{prop:D-unitaries}, if $\big((\phi, \Iu), (\psi,\Iv)\big)$ is a Cuntz pair, then the second and fourth bullet points above are equivalent to saying that the range of $u$ is in the \cstar-algebra $D_{(\phi,\Iu)}$. 

Similarly to how we formed Cuntz sums of two elements, we may form infinite sums by a similar method if the underlying action is strongly stable.

\begin{defi} \label{def:inf-repeat}
Suppose that $\beta$ is strongly stable.
Let $t_n\in \CM(B)^\beta$ be any sequence of isometries such that $\sum_{n=1}^\infty t_nt_n^* = \eins$ in the strict topology.
Then we have a $\beta$-equivariant $*$-homomorphism
\[
\ell^\infty(\IN, \CM(B)) \to \CM(B),\quad (b_n)_{n\geq 1} \mapsto \sum_{n=1}^\infty t_nb_nt_n^*,
\]
which is jointly strictly continuous on the unit ball of $\ell^\infty(\IN, \CM(B))$ and does not depend on the choice of $t_n$ up to unitary equivalence with a unitary in $\CM(B)^\beta$.(Similarly as before, if $v_n\in\CM(B)^\beta$ is another sequence of isometries satisfying the same relation, then the unitary defined as the strict limit $w=\sum_{n=1}^\infty t_nv_n^*$ implements this equivalence.)
For any sequence of cocycle representations $(\phi^{(n)},\Iu^{(n)}): (A,\alpha)\to (\CM(B),\beta)$, we may hence define the countable sum
\[
(\Phi,\IU)=\bigoplus_{n=1}^\infty (\phi^{(n)},\Iu^{(n)}): (A,\alpha)\to (\CM(B),\beta)
\]
via the pointwise strict limits
\[
\Phi(a)=\sum_{n=1}^\infty t_n\phi^{(n)}(a)t_n^*,\quad \IU_g = \sum_{n=1}^\infty t_n\Iu^{(n)}_g t_n^*.
\]
Up to equivalence with a unitary in $\CM(B)^\beta$, this cocycle representation does not depend on the choice of $(t_n)_n$.
In particular, in the special case that $(\phi^{(n)},\Iu^{(n)})=(\phi,\Iu)$ for all $n$, we denote the resulting countable sum by $(\phi^\infty, \Iu^\infty)$ and call it the \emph{infinite repeat} of $(\phi,\Iu)$.
\end{defi}

\begin{defi} \label{def:wc}
Let $(\psi,\Iv): (A,\alpha) \to (\CM(B),\beta)$ be a cocycle representation, and let $\FC$ be a family of cocycle representations from $(A,\alpha)$ to $(\CM(B),\beta)$.
We say that $(\psi,\Iv)$ is \emph{weakly contained} in $\FC$, written $(\psi,\Iv)\wc \FC$,
if the following is true.

For all compact sets $K\subseteq G$, $\CF\subset A$, every $\eps\greater 0$, and every contraction $b\in B$ there exist $(\phi^{(1)},\Iu^{(1)}),\dots,(\phi^{(\ell)},\Iu^{(\ell)})\in\FC$ and a collection of elements
\[
\{c_{j,k} \mid j=1,\dots, N,\ k=1,\dots,\ell\}\subset B
\]
satisfying
\begin{equation} \label{eq:wc:1}
\max_{g\in K} \Big\| b^*\Iv_g\beta_g(b)-\sum_{k=1}^\ell \sum_{j=1}^{N} c_{j,k}^*\Iu^{(k)}_g\beta_g(c_{j,k}) \Big\| \leq \eps
\end{equation}
and
\begin{equation} \label{eq:wc:2}
\max_{a\in\CF} \Big\| b^*\psi(a)b - \sum_{k=1}^\ell \sum_{j=1}^N c_{j,k}^*\phi^{(k)}(a)c_{j,k} \Big\| \leq \eps.
\end{equation}
In particular, if $\FC$ consists of a single cocycle representation $(\phi,\Iu)$, then we write $(\psi,\Iv)\wc (\phi,\Iu)$,
in place of $(\psi,\Iv)\wc\set{(\phi,\Iu)}$, and in this case one can choose $\ell=1$ above.
\end{defi}

\begin{rem}
It is straightforward to verify that $\wc$ is a reflexive and transitive relation on the set of all cocycle representations.

When $G=\set{1}$ and $A,\phi,\psi$ are all unital, then the above recovers ``approximate domination'' as u.c.p.\ maps, as for example considered in \cite[Definition 3.1]{Gabe20}.
On the other hand, when $A=\IC$, $B=\CK$, $\beta=\id_\CK$ and $\phi,\psi$ are unital, then the above recovers weak containment of unitary representations $G\to\CU(\CH)\subset\CM(\CK)$, where $\CH$ is an infinite-dimensional separable Hilbert space.
This is a consequence of comparing \autoref{cor:coc-rep-absorption} below with Voiculescu's theorem \cite{Voiculescu76} and its known variant for unitary representations.
\end{rem}

The following is a dynamical analog of \cite[Definition 2.11]{DadarlatEilers02}.

\begin{defi} \label{def:contained-at-infty}
Let $(\phi,\Iu), (\psi,\Iv): (A,\alpha) \to (\CM(B),\beta)$ be two cocycle representations.
We say that $(\psi,\Iv)$ is \emph{contained in $(\phi,\Iu)$ at infinity}, if the following is true.
For all compact sets $K\subseteq G$, $\CF\subset A$, every $\eps> 0$, and every contraction $b\in B$ there exists an element $x\in B$ satisfying
\[
\max_{g\in K} \| b^*\Iv_g\beta_g(b)-x^*\Iu_g\beta_g(x) \| \leq \eps,
\]
\[
\max_{a\in\CF} \| b^*\psi(a)b - x^*\phi(a)x \| \leq \eps,
\]
and
\[
\| x^*b \| \leq \eps.
\]
\end{defi}

If we compare \autoref{def:contained-at-infty} with \autoref{def:wc}, then we can readily see that if $(\psi,\Iv)$ is contained in $(\phi,\Iu)$ at infinity, then $(\psi,\Iv)$ is weakly contained in $(\phi,\Iu)$.

\begin{rem}
Note that by considering the case where $1\in K$ in \eqref{eq:wc:1} we obtain $\| b^\ast b - \sum_{j=1}^\ell \sum_{k=1}^N c_{j,k}^\ast c_{j,k}\| \leq \eps$. 
Consequently, it easily follows that $(\psi, \Iv) \wc (\phi, \Iu)$ if and only if $(\psi^\dagger, \Iv) \wc (\phi^\dagger, \Iu)$, where 
\[
(\phi^\dagger,\Iu), (\psi^\dagger,\Iv) \colon (A^\dagger, \alpha^\dagger) \to (\CM(B), \beta)
\]
 are the induced unital cocycle representation out of the proper unitization. Similarly, $(\psi, \Iv)$ is contained in $(\phi,\Iu)$ at infinity if and only if $(\psi^\dagger, \Iv)$ is contained in $(\phi^\dagger, \Iu)$ at infinity.
This is in stark contrast to the non-equivariant case where this subtlety has led to false statements in the literature, such as the one addressed by the first named author in \cite{Gabe16}.
\end{rem}

We make a few basic observations about weak containment.

\begin{prop} \label{prop:wc-observations}
Let 
\[
(\phi,\Iu), (\psi,\Iv), (\psi^{(n)},\Iv^{(n)}): (A,\alpha)\to (\CM(B),\beta)
\] 
be cocycle representations for all $n \in \mathbb N$.
\begin{enumerate}[label=\textup{(\roman*)},leftmargin=*]
\item \label{prop:wc-observations:1}
If we have for all $a\in A$, $b\in B$ and compact sets $K\subseteq G$ that
\[
\lim_{n\to\infty} \Big( \| \big( \psi(a)-\psi^{(n)}(a) \big) b \| + \max_{g\in K} \|(\Iv_g-\Iv^{(n)}_g)b \| \Big) = 0,
\]
then $(\psi,\Iv)\wc \{(\psi^{(n)},\Iv^{(n)}) \}_{n\in\IN}$.
\item \label{prop:wc-observations:2}
If there is a sequence of unitaries $u_n\in\CU(\CM(B))$ such that $\dst \psi=\lim_{n\to\infty} \ad(u_n)\circ\phi$ in the point-strict topology and $\dst \Iv_\bullet=\lim_{n\to\infty} u_n\Iu_\bullet\beta_\bullet(u_n)^*$ strictly and uniformly over compact sets, then $(\psi,\Iv)\wc(\phi,\Iu)$.
\end{enumerate}
If furthermore $\beta$ is strongly stable, then
\begin{enumerate}[label=\textup{(\roman*)},leftmargin=*,resume]
\item \label{prop:wc-observations:3}
One has $(\psi,\Iv)\wc\{(\psi^{(n)},\Iv^{(n)}) \}_{n\in\IN}$ if and only if $\dst(\psi,\Iv)\wc \bigoplus_{n=1}^\infty (\psi^{(n)},\Iv^{(n)})$.
\item \label{prop:wc-observations:4}
One has $(\phi^\infty,\Iu^\infty) \wc (\phi,\Iu)$.
\end{enumerate}
\end{prop}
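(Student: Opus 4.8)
\textbf{Proof proposal for \autoref{prop:wc-observations}.}

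The plan is to treat the four parts in order, with parts \ref{prop:wc-observations:1} and \ref{prop:wc-observations:2} being essentially direct unwindings of the definitions, and parts \ref{prop:wc-observations:3} and \ref{prop:wc-observations:4} using the strong stability to move between a family and its Cuntz/infinite sum. For \ref{prop:wc-observations:1}: given a compact set $K\subseteq G$, a finite set $\CF\fin A$, a tolerance $\eps>0$ and a contraction $b\in B$, the hypothesis lets us pick a single $n$ so large that $\|(\psi(a)-\psi^{(n)}(a))b\|\le\eps/2$ for all $a\in\CF$ and $\max_{g\in K}\|(\Iv_g-\Iv^{(n)}_g)b\|\le\eps/2$; then $b^*\psi(a)b\approx_{\eps/2} b^*\psi^{(n)}(a)b$ and $b^*\Iv_g\beta_g(b)\approx_{\eps/2} b^*\Iv^{(n)}_g\beta_g(b)$ (using $\|\beta_g(b)\|\le1$), so the single datum $(\phi^{(1)},\Iu^{(1)})=(\psi^{(n)},\Iv^{(n)})$ with $\ell=N=1$ and $c_{1,1}=b$ verifies \eqref{eq:wc:1} and \eqref{eq:wc:2}. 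For \ref{prop:wc-observations:2}: fix $K,\CF,\eps,b$ as above; choose $n$ with $\|(\psi(a)-u_n\phi(a)u_n^*)b\|$ small for $a\in\CF$ and $\max_{g\in K}\|(\Iv_g-u_n\Iu_g\beta_g(u_n)^*)b\|$ small. Setting $x=u_n^*b\in B$ (note $u_n^*b\in B$ since $b\in B$ and $u_n\in\CU(\CM(B))$), we get $b^*\psi(a)b\approx b^*u_n\phi(a)u_n^*b=x^*\phi(a)x$ and $b^*\Iv_g\beta_g(b)\approx b^*u_n\Iu_g\beta_g(u_n)^*\beta_g(b)=x^*\Iu_g\beta_g(x)$, using that $\beta_g(x)=\beta_g(u_n^*)\beta_g(b)=(u_n\Iu_g^{-1}\cdots)$--more carefully, one rewrites $u_n\Iu_g\beta_g(u_n)^*\beta_g(b)=\Iu_g^{u_n}\beta_g(b)$ and identifies $\beta_g(x)$ accordingly; in any case this shows $(\psi,\Iv)$ is even contained in $(\phi,\Iu)$ at infinity-style (without the small-overlap condition), hence $(\psi,\Iv)\wc(\phi,\Iu)$.

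For \ref{prop:wc-observations:3}, assume $\beta$ is strongly stable and write $(\Psi,\IV)=\bigoplus_{n=1}^\infty(\psi^{(n)},\Iv^{(n)})$ using isometries $t_n\in\CM(B)^\beta$ with $\sum t_nt_n^*=\eins$ strictly. For the ``if'' direction, observe that each $(\psi^{(n)},\Iv^{(n)})$ is weakly contained in $(\Psi,\IV)$: indeed $(\psi^{(n)},\Iv^{(n)})=\ad(t_n^*)\circ(\Psi,\IV)$ in the sense that $t_n^*\Psi(a)t_n=\psi^{(n)}(a)$ and $t_n^*\IV_g\beta_g(t_n)=\Iv^{(n)}_g$ (using $t_n\in\CM(B)^\beta$, so $\beta_g(t_n)=t_n$), and for a contraction $b\in B$ the element $t_nb\in B$ witnesses weak containment; so transitivity of $\wc$ gives $(\psi,\Iv)\wc(\Psi,\IV)\Rightarrow(\psi,\Iv)\wc\{(\psi^{(n)},\Iv^{(n)})\}_n$. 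For the ``only if'' direction, suppose $(\psi,\Iv)\wc\{(\psi^{(n)},\Iv^{(n)})\}_n$; given $K,\CF,\eps,b$, the definition supplies finitely many indices---WLOG $(\phi^{(1)},\Iu^{(1)}),\dots,(\phi^{(\ell)},\Iu^{(\ell)})$ among the $(\psi^{(n)},\Iv^{(n)})$, say with indices $n_1,\dots,n_\ell$---and elements $c_{j,k}\in B$ with \eqref{eq:wc:1} and \eqref{eq:wc:2}. Replacing $c_{j,k}$ by $t_{n_k}c_{j,k}\in B$ and using $\beta_g(t_{n_k})=t_{n_k}$, $t_{n_k}^*t_{n_{k'}}=\delta_{k,k'}\eins$, the double sum $\sum_{j,k}(t_{n_k}c_{j,k})^*\IV_g\beta_g(t_{n_k}c_{j,k})$ collapses to $\sum_{j,k}c_{j,k}^*\Iu^{(k)}_g\beta_g(c_{j,k})$ and similarly for the $\psi$-estimate, so the same inequalities hold with $(\Psi,\IV)$ in place of the family; thus $(\psi,\Iv)\wc(\Psi,\IV)$.

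Part \ref{prop:wc-observations:4} is then immediate: apply \ref{prop:wc-observations:3} to the constant sequence $(\psi^{(n)},\Iv^{(n)})=(\phi,\Iu)$, whose infinite sum is by definition $(\phi^\infty,\Iu^\infty)$, and use that $(\phi,\Iu)\wc(\phi,\Iu)$ by reflexivity, so $(\phi,\Iu)\wc\{(\phi,\Iu)\}_{n\in\IN}$ and hence $(\phi,\Iu)\wc(\phi^\infty,\Iu^\infty)$; wait---the desired conclusion is the reverse direction $(\phi^\infty,\Iu^\infty)\wc(\phi,\Iu)$, which follows instead by observing that the constant family is trivially weakly contained in the single representation $(\phi,\Iu)$ (each member equals it), so by the ``if'' implication just proven---more precisely by \ref{prop:wc-observations:3} applied in the form: $(\phi^\infty,\Iu^\infty)\wc\{(\phi,\Iu)\}_n\iff(\phi^\infty,\Iu^\infty)\wc\bigoplus(\phi,\Iu)=(\phi^\infty,\Iu^\infty)$, the latter holding by reflexivity; hence $(\phi^\infty,\Iu^\infty)\wc\{(\phi,\Iu)\}_n$, and since a family consisting of copies of a single $(\phi,\Iu)$ is weakly contained in $(\phi,\Iu)$, transitivity yields $(\phi^\infty,\Iu^\infty)\wc(\phi,\Iu)$. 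The main point requiring care---and the place I expect the only genuine subtlety---is the bookkeeping in \ref{prop:wc-observations:3} showing that conjugating the witnessing elements $c_{j,k}$ by the isometries $t_{n_k}$ exactly reproduces the required sums; this hinges on $t_n$ being $\beta$-fixed (so that $\beta_g$ passes through the isometries) and on orthogonality of their ranges, both of which are built into \autoref{def:inf-repeat}.
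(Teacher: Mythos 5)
Your proofs of parts (i), (ii), and the ``only if'' direction of (iii) are essentially correct in spirit and match the paper's strategy; (iv) also recovers the paper's reduction to (iii). There are, however, two genuine issues.

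First, the major one: your argument for the \emph{if} direction of \ref{prop:wc-observations:3} is logically backwards. You correctly observe that $t_n^*\Psi(a)t_n=\psi^{(n)}(a)$ and $t_n^*\IV_g\beta_g(t_n)=\Iv^{(n)}_g$, which shows $(\psi^{(n)},\Iv^{(n)})\wc(\Psi,\IV)$ for each $n$. But transitivity from $(\psi,\Iv)\wc(\Psi,\IV)$ to $(\psi,\Iv)\wc\{(\psi^{(n)},\Iv^{(n)})\}_n$ would require the \emph{opposite} fact, namely $(\Psi,\IV)\wc\{(\psi^{(n)},\Iv^{(n)})\}_n$, which you never establish. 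What the paper actually does here is a truncation argument: given witnessing elements $d_1,\dots,d_m\in B$ for weak containment in $(\Psi,\IV)$, one chooses $N$ with $\|(\eins-\sum_{k=1}^N t_kt_k^*)d_j\|$ small, sets $c_{j,k}=t_k^*d_j$, and uses $\sum_{j,k}c_{j,k}^*\psi^{(k)}(a)c_{j,k}\approx\sum_j d_j^*\Psi(a)d_j$. The subtle bookkeeping you flagged at the end is in the wrong direction --- the ``only if'' step you carried out (inserting $t_{n_k}$) is the easy one; the ``if'' step is where the actual work lies and where your proposal has a hole.

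Second, a smaller but real gap in parts \ref{prop:wc-observations:1} and \ref{prop:wc-observations:2}: the estimate you claim, $b^*\Iv_g\beta_g(b)\approx_{\eps/2}b^*\Iv_g^{(n)}\beta_g(b)$ from $\|(\Iv_g-\Iv_g^{(n)})b\|\le\eps/2$, does not follow, because the difference $\Iv_g-\Iv_g^{(n)}$ is multiplied on the right by $\beta_g(b)$, not by $b$. The hypothesis in (i) is pointwise in $b$; what is needed is uniform convergence over the compact set $\{\beta_g(b):g\in K\}$, which follows from the hypothesis together with the uniform boundedness of $\{\Iv_g-\Iv_g^{(n)}\}$ (a standard covering argument). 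The paper handles this explicitly by invoking compactness of $\{\beta_g(b):g\in K\}$; your appeal to $\|\beta_g(b)\|\le 1$ does not fill this gap. The same point affects the estimate in your direct proof of (ii), though your approach there (producing $x=u_n^*b$ and checking the cocycle identity for $\beta_g(x)$ directly) is otherwise a nice, more explicit alternative to the paper's reduction to (i).
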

\begin{proof}
\ref{prop:wc-observations:1}:
Let $K\subseteq G$, $\CF\subset A$, $b\in B$ and $\eps>0$ be given.
By assumption, using that $\{ \beta_g(b) : g\in K\}$ is compact, we may choose $j\geq 1$ such that
\[
\max_{g\in K} \| b^*\Iv_g\beta_g(b)-b^*\Iv^{(j)}_g\beta_g(b) \Big\| \leq \eps
\]
and
\[
\max_{a\in\CF} \| b^*\psi(a)b - b^*\psi^{(j)}(a)b \Big\| \leq \eps.
\]
Thus $\ell=N=1$ and $c_{1,1}=b$ have the desired property as required by \autoref{def:wc}, which shows $(\psi,\Iv)\wc \{(\psi^{(n)},\Iv^{(n)}) \}_{n\in\IN}$.

\ref{prop:wc-observations:2}:
It is trivial that two unitarily equivalent cocycle representations are weakly contained in each other.
Thus the claim is a straightforward consequence of \ref{prop:wc-observations:1}.

\ref{prop:wc-observations:3}:
Let $t_k\in\CM(B)^\beta$ be a sequence of isometries such that $\sum_{k=1}^\infty t_kt_k^*=\eins$ in the strict topology.
Denote $(\Psi,\IV)=\bigoplus_{k=1}^\infty (\psi^{(k)},\Iv^{(k)})$, where the sum is formed via this sequence.
Suppose a quadruple $(K,\CF,b,\eps)$ is given.
Without loss of generality, let us assume that $\CF$ is a subset of the unit ball of $A$.

For the ``if'' part, suppose we have contractions $d_1,\dots,d_n\in B$ with
\[
\max_{g\in K} \Big\| b^*\Iv_g\beta_g(b)-\sum_{j=1}^n d_j^*\IV_g\beta_g(d_j) \Big\| \leq \eps
\]
and
\[
\max_{a\in\CF} \Big\| b^*\psi(a)b - \sum_{j=1}^n d_j^*\Psi(a)d_j \Big\| \leq \eps.
\]
Choose $N\in\IN$ such that
\begin{equation} \label{eq:wc-obs:1}
\max_{1\leq j\leq n} \|(\eins-\sum_{k=1}^N t_kt_k^*)d_j\| \leq \eps/2n.
\end{equation}
Then if we set $c_{j,k}=t_k^*d_j$ for $j=1,\dots,n$ and $k=1,\dots,N$, we observe for all $g\in K$ that
\[
\begin{array}{ccl}
\dst \sum_{j=1}^n\sum_{k=1}^N c_{j,k}^*\Iv_g^{(k)} \beta_g(c_{j,k}) 
&=& \dst\sum_{j=1}^n\sum_{k=1}^N d_j^* t_k \Iv^{(k)}_g t_k^* \beta_g(d_j) \\
&\stackrel{\eqref{eq:wc-obs:1}}{=}_{\makebox[0pt]{\footnotesize\hspace{-3mm}$\eps$}}& \dst \sum_{j=1}^n d_j^*\IV_g\beta_g(d_j) \ =_\eps \ b^*\Iv_g\beta_g(b).
\end{array}
\]
Similarly we compute for all $a\in \CF$ that
\[
\begin{array}{ccl}
\dst \sum_{j=1}^n\sum_{k=1}^N c_{j,k}^*\psi^{(k)}(a)c_{j,k} 
&=& \dst\sum_{j=1}^n\sum_{k=1}^N d_j^* t_k \psi^{(k)}(a) t_k^* d_j \\
&\stackrel{\eqref{eq:wc-obs:1}}{=}_{\makebox[0pt]{\footnotesize\hspace{-3mm}$\eps$}}& \dst \sum_{j=1}^n d_j^*\Psi(a) d_j \ =_{\makebox[0pt]{\footnotesize\hspace{2mm}$\eps$}} \ \ b^*\psi(a)b.
\end{array}
\]
This verifies $(\psi,\Iv)\wc \{(\psi^{(n)},\Iv^{(n)}) \}_{n\in\IN}$.

For the ``only if'' part, suppose we have found $n,N\geq 1$ and $c_{j,k}\in B$ for the quadruple $(K,\CF,b,\eps)$ as required by \autoref{def:wc}.
Set $d_{j,k}=t_kc_{j,k}\in B$.
Similarly as above we have for all $a\in A$
\[
\sum_{j=1}^n\sum_{k=1}^N d_{j,k}^*\Psi(a)d_{j,k} = \sum_{j=1}^n \sum_{k=1}^N c_{j,k}^*\psi^{(k)}(a)c_{j,k} =_\eps b^*\psi(a)b
\]
and for all $g\in K$ that
\[
\sum_{j=1}^n\sum_{k=1}^N d_{j,k}^*\IV_g\beta_g(d_{j,k}) = \sum_{j=1}^n\sum_{k=1}^N c_{j,k}^*\Iv^{(k)}_g\beta_g(c_{j,k}) =_\eps b^*\Iv_g\beta_g(b).
\]
This shows $(\psi,\Iv)\wc(\Psi,\IV)$.

\ref{prop:wc-observations:4} is a trivial consequence of \ref{prop:wc-observations:3}.
\end{proof}

%%%

We need the following fundamental technical lemma, which we will use in a crucial step later.
It is a generalization of a statement that appeared in the literature before, usually in a somewhat different form, both in the context of $KK$-theory and the classification theory of purely infinite \cstar-algebras; see for example \cite[Lemma 6.3.7]{Rordam} or \cite[Lemma 2.4]{KirchbergRordam05}.

\begin{lemma} \label{lem:the-O2-sum-lemma}
Let $B$ be any \cstar-algebra and $\kappa\in\Aut(B)$ an automorphism, which we extend canonically to an automorphism on $\CM(B)$.
Let $b_1,b_2\in \CM(B)$ be two elements, and $s\in\CM(B)$ an isometry.
Suppose that $r_1,r_2\in\CM(B)^\kappa$ are two isometries fixed by $\kappa$ that satisfy $r_1r_1^*+r_2r_2^*=\eins$.
Furthermore, suppose that $[b_1,r_j]=0=[b_1,r_j^*]$ for $j=1,2$.
Then the element
\[
u = (r_1r_1^* + r_2sr_2^*)s^*+r_2(\eins-ss^*) \in \CM(B)
\]
is a unitary, and it satisfies
\[
\| ub_2\kappa(u)^* - (b_1\oplus_{r_1,r_2} b_2) \| \leq 5\cdot\max\Big\{ \| s b_1 - b_2 \kappa(s) \| , \| \kappa(s) b_1^* - b_2^* s \| \Big\}.
\]
Furthermore, if $sb_1-b_2\kappa(s)\in B$ and $\kappa(s)b_1^*-b_2^*s\in B$, then also 
\[
 u b_2 \kappa(u)^*-(b_1\oplus_{r_1,r_2} b_2) \in B.
\]
\end{lemma}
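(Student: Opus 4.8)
The plan is to verify the unitarity of $u$ by a direct computation, and then to estimate $\|ub_2\kappa(u)^* - (b_1\oplus_{r_1,r_2}b_2)\|$ by replacing $u$ and $\kappa(u)^*$ with nearby elements one factor at a time, using the hypothesis $sb_1 \approx b_2\kappa(s)$ to slide things past each other. First I would check that $u^*u = \eins = uu^*$: since $s^*s=\eins$ one has $u = (r_1r_1^* + r_2ss^*r_2^*)s^* + r_2(\eins-ss^*)$ after noting $r_2sr_2^*s^* = r_2ss^*r_2^*$ once we recall $r_2$ need not commute with $s$ — wait, more carefully, $u = r_1r_1^*s^* + r_2sr_2^*s^* + r_2(\eins-ss^*)$, and one computes $u^*u$ using $r_1^*r_2 = 0 = r_2^*r_1$, $r_i^*r_i = \eins$, $s^*s=\eins$; the cross terms cancel and the diagonal terms sum to $ss^* + (\eins-ss^*) = \eins$ on the relevant summands. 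A symmetric computation gives $uu^* = \eins$. (Here I should double-check whether $\kappa$ appears in the unitarity claim — it does not, so this is purely algebraic.)

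For the norm estimate, write $\delta := \max\{\|sb_1 - b_2\kappa(s)\|, \|\kappa(s)b_1^* - b_2^*s\|\}$. The target $b_1\oplus_{r_1,r_2}b_2 = r_1b_1r_1^* + r_2b_2r_2^*$. I would compute $ub_2\kappa(u)^*$ directly from the formula for $u$, expanding into a sum of terms. The key manipulations: $s^*b_2\kappa(s) \approx s^*b_2\kappa(s)$ and using $b_2\kappa(s) \approx sb_1$ we get $s^*b_2\kappa(s) \approx s^*sb_1 = b_1$; similarly $\kappa(s)^*b_2^* s \approx b_1^*$ from the second hypothesis. Since $b_1$ $*$-commutes with $r_1,r_2$ and $r_1,r_2$ are $\kappa$-fixed, the terms $r_1r_1^*s^*b_2\kappa(s)r_1r_1^* \approx r_1r_1^*b_1r_1r_1^* = r_1b_1r_1^*$ (using $r_1^*b_1 = b_1'r_1^*$ type commutation and $r_1^*r_1 = \eins$), and the term $r_2sr_2^*s^* b_2 \kappa(s) \kappa(r_2) \kappa(s)^* \kappa(r_2)^* = r_2sr_2^*(s^*b_2\kappa(s))r_2\kappa(s)^* r_2^* \approx r_2 s r_2^* b_1 r_2 \kappa(s)^* r_2^*$... here I must be careful: $b_1$ commutes with $r_2$, so $r_2^* b_1 r_2 = b_1$, giving $r_2 s b_1 \kappa(s)^* r_2^* = r_2 (sb_1\kappa(s)^*) r_2^* \approx r_2 b_2 \kappa(s)\kappa(s)^* r_2^*$; but this last step needs $sb_1 \approx b_2\kappa(s)$ again, and then $\kappa(s)\kappa(s)^*$ is only a projection, not the identity — so I would need the $(\eins - ss^*)$-correction term in $u$ to pick up the complementary piece. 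Tracking all summands, the mismatch in each is bounded by $\delta$, and there are at most five terms where a single substitution $b_2\kappa(s)\leftrightarrow sb_1$ or $\kappa(s)b_1^*\leftrightarrow b_2^*s$ is invoked (each substitution using that all other factors are contractions or unitaries), yielding the constant $5$.

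The main obstacle I anticipate is the bookkeeping: keeping precise track of which of $r_1,r_2,s$ commute with $b_1$ versus $b_2$, and making sure the $\kappa$'s land in the right place — $\kappa$ fixes $r_1,r_2$ but acts nontrivially on $s$ and on $b_2$, so $\kappa(u)^*$ has $\kappa(s)$ in place of $s$ and $\kappa(s)^*$ in place of $s^*$. The projection $ss^*$ (rather than $\eins$) is what forces the extra summand $r_2(\eins-ss^*)$ into $u$, and verifying that this summand exactly compensates the defect of $s$ being a proper isometry is the delicate point. Once the expansion is laid out, the estimate is a routine application of the triangle inequality together with the observation that conjugation by unitaries and multiplication by contractions are norm-nonincreasing. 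For the final sentence, the exact same term-by-term analysis shows that if each invoked difference $sb_1 - b_2\kappa(s)$ and $\kappa(s)b_1^* - b_2^*s$ lies in $B$, then $ub_2\kappa(u)^* - (b_1\oplus_{r_1,r_2}b_2)$ is a finite sum of elements each lying in the ideal $B$ (using that $B$ is an ideal in $\CM(B)$ and is $\kappa$-invariant, so multiplying an element of $B$ left or right by anything in $\CM(B)$ stays in $B$), hence lies in $B$.
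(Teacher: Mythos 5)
Your outline has the right ingredients: the unitarity check is purely algebraic and goes through exactly as you describe (using $s^*s=\eins$, $r_i^*r_j=\delta_{ij}\eins$, $r_1r_1^*+r_2r_2^*=\eins$), and the norm estimate does indeed reduce to repeated uses of the triangle inequality together with the two substitutions $b_2\kappa(s)\leftrightarrow sb_1$ and $\kappa(s)b_1^*\leftrightarrow b_2^*s$, the $*$-commutation of $b_1$ with $r_1,r_2$, and the $\kappa$-invariance of $r_1,r_2$. The remark about the final ``$\in B$'' statement is also correctly reasoned. In spirit this is the same proof.

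Where you diverge from the paper is in the \emph{organization} of the estimate, and that organization is precisely what delivers the constant $5$. You propose to expand $ub_2\kappa(u)^*$ directly into a sum of (nine) terms and estimate each. If you actually carry this out term by term, you will find you need roughly ten single substitutions, not five, so the naive bookkeeping overshoots the claimed constant. To get $5$ this way you would need to notice that $u=Ps^*+r_2(\eins-ss^*)$ with $P=r_1r_1^*+r_2sr_2^*$ an isometry, and keep $P$ intact rather than expanding it; then $ub_2\kappa(u)^*$ splits into only four blocks and the count comes out to $5$. The paper instead sidesteps this entirely by estimating the \emph{one-sided} product $(b_1\oplus_{r_1,r_2}b_2)\kappa(u)$ against $ub_2$ through a telescoping chain of five $\eps$-steps, and only at the very end multiplies on the right by the unitary $\kappa(u)^*$, which is an isometry on norms and hence costs nothing. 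This one-sided chain is shorter and makes the constant $5$ transparent; it also automatically tracks membership in $B$ through each step. So your route is workable but does not, as written, establish the stated constant, and you would either have to discover the $P$-grouping or adopt the paper's one-sided-then-conjugate scheme.
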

\begin{proof}
First, let us verify that $u$ is indeed a unitary.
We have
\[
u^*u = s(r_1r_1^*+r_2s^*sr_2^*)s^* + (\eins-ss^*) = \eins
\]
and
\[
\begin{array}{ccl}
uu^* &=& r_1r_1^*s^*sr_1r_1^* + r_2sr_2^*s^*sr_2s^*r_2^*+r_2(\eins-ss^*)r_2^* \\
&=& r_1r_1^* + r_2r_2^* \ = \ \eins.
\end{array}
\]
Set $\eps = \max\big\{ \| s b_1 - b_2 \kappa(s) \| , \| \kappa(s) b_1^* - b_2^* s \| \big\}$.
We compute
\[
\begin{array}{ll}
 \multicolumn{2}{l}{ \big( b_1\oplus_{r_1,r_2} b_2 \big) \kappa(u) } \\
=& \big( r_1 b_1 r_1^* + r_2 b_2 r_2^*\big) \kappa(u) \\
=& r_1 b_1 r_1^* \kappa(s)^* + r_2 b_2 r_2^* \kappa(u) \\
=& r_1 r_1^* b_1 \kappa(s)^* + r_2 b_2(\eins-\kappa(s)\kappa(s)^*) + r_2 b_2 \kappa(s) r_2^* \kappa(s)^* \\
=_{2\eps}& r_1 r_1^* b_1 \kappa(s)^* + r_2(b_2-s b_1 \kappa(s)^*) + r_2 s b_1 r_2^* \kappa(s)^* \\
=&  r_1 r_1^* b_1 \kappa(s)^* + r_2(b_2-s b_1 \kappa(s)^*) + r_2 s r_2^* b_1 \kappa(s)^* \\
=_{2\eps}& r_1 r_1^* b_1 \kappa(s)^* + r_2(\eins-ss^*)b_2 + r_2 s r_2^* s^* b_2 \\
=_{\eps}& (r_1 r_1^* s^* + r_2(\eins-ss^*) + r_2 s r_2^* s^* ) b_2 \ = \ u b_2.
\end{array}
\]
We also see from this calculation that if $sb_1-b_2\kappa(s)\in B$ and $\kappa(s)b_1^*-b_2^*s\in B$, then the intermediate differences in these calculations are also elements of $B$.
We may now multiply everything with $\kappa(u)^*$ from the right and obtain the desired statement.
\end{proof}

This leads to the following sufficient criterion for the absorption of cocycle representations:

\begin{lemma} \label{lem:sufficient-criterion-absorption}
Let $(\phi,\Iu), (\psi,\Iv): (A,\alpha)\to (\CM(B),\beta)$ be two cocycle representations.
Suppose there is a sequence of isometries $s_n\in\CM(B)$ satisfying
  \begin{itemize}
  \item $\dst \lim_{n\to\infty} \| s_n\psi(a)-\phi(a)s_n \|=0$ for all $a\in A$;
  \item $s_n\psi(a)-\phi(a)s_n\in B$ for all $a\in A$ and $n\geq 1$;
  \item $\dst \lim_{n\to\infty} \sup_{g\in K} \| s_n\Iv_g - \Iu_g\beta_g(s_n) \| = 0$ for all compact sets $K\subseteq G$;
  \item The map $[g\mapsto s_n\Iv_g - \Iu_g\beta_g(s_n)]$ takes values in $B$ for all $n\geq 1$.
  \item $s_{n+1}^*s_n=0$ for all $n\geq 1$.
  \end{itemize}
Suppose that there exists a unital inclusion $\CO_2\subseteq\CM(B)^\beta$ that commutes with the ranges of $\psi$ and $\Iv$.
Then $(\phi,\Iu)\asymp (\psi,\Iv)\oplus(\phi,\Iu)$.
\end{lemma}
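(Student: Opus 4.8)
The plan is to apply the O$_2$-sum lemma \autoref{lem:the-O2-sum-lemma} with the canonical generating isometries $r_1,r_2$ of the given copy $\CO_2\subseteq\CM(B)^\beta$ playing the role of the distinguished $\beta$-fixed isometries, and with a norm-continuous interpolation of the given $s_n$ playing the role of the isometry $s$. Since $r_1,r_2$ commute with the ranges of $\psi$ and $\Iv$, we may form the Cuntz sum $(\psi,\Iv)\oplus_{r_1,r_2}(\phi,\Iu)$; because this is unique up to conjugation by a unitary in $\CM(B)^\beta$, and because $\asymp$ is clearly unchanged when the target is conjugated by such a unitary $w$ (a witnessing path $(u_t)_t$ then becomes the witnessing path $(wu_t)_t$, using $\beta_g(w)=w$), it suffices to prove $(\phi,\Iu)\asymp(\psi,\Iv)\oplus_{r_1,r_2}(\phi,\Iu)$.

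First I would upgrade the sequence $(s_n)_n$ to a path. As $s_{n+1}^*s_n=0$ (hence also $s_n^*s_{n+1}=0$), consecutive $s_n$ have orthogonal ranges, so for $t\in[n,n+1]$ the element $s_t:=\cos\bigl(\tfrac{\pi}{2}(t-n)\bigr)s_n+\sin\bigl(\tfrac{\pi}{2}(t-n)\bigr)s_{n+1}$ is again an isometry, and $t\mapsto s_t$ is a norm-continuous path $[1,\infty)\to\CM(B)$. For $t\in[n,n+1]$ the expressions $s_t\psi(a)-\phi(a)s_t$ and $s_t\Iv_g-\Iu_g\beta_g(s_t)$ are the same convex combinations (in the coefficients $\cos,\sin$) of the corresponding expressions for $s_n$ and $s_{n+1}$; hence they still take values in $B$, while $\|s_t\psi(a)-\phi(a)s_t\|$ for fixed $a$ and $\max_{g\in K}\|s_t\Iv_g-\Iu_g\beta_g(s_t)\|$ for compact $K$ are bounded by the sum of the two endpoint quantities and therefore tend to $0$ as $t\to\infty$.

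Next, for $t\geq 1$ put $u_t:=(r_1r_1^*+r_2s_tr_2^*)s_t^*+r_2(\eins-s_ts_t^*)$; this depends norm-continuously on $t$ and, by \autoref{lem:the-O2-sum-lemma}, is a unitary for every $t$. Applying \autoref{lem:the-O2-sum-lemma} with $\kappa=\id_B$, $b_1=\psi(a)$, $b_2=\phi(a)$ (permissible since $r_1,r_2$ $*$-commute with $\psi(A)$) yields $\|u_t\phi(a)u_t^*-\psi(a)\oplus_{r_1,r_2}\phi(a)\|\leq 5\max\{\|s_t\psi(a)-\phi(a)s_t\|,\|s_t\psi(a^*)-\phi(a^*)s_t\|\}$, with this difference lying in $B$; applying it with $\kappa=\beta_g$, $b_1=\Iv_g$, $b_2=\Iu_g$ (permissible since $r_1,r_2\in\CM(B)^\beta\subseteq\CM(B)^{\beta_g}$ $*$-commute with the unitary $\Iv_g$), and using that $\|\beta_g(s_t)\Iv_g^*-\Iu_g^*s_t\|=\|s_t\Iv_g-\Iu_g\beta_g(s_t)\|$ (multiply on both sides by the unitaries $\Iu_g$ and $\Iv_g$), yields $\|u_t\Iu_g\beta_g(u_t)^*-\Iv_g\oplus_{r_1,r_2}\Iu_g\|\leq 5\|s_t\Iv_g-\Iu_g\beta_g(s_t)\|$, again with this difference lying in $B$. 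Together with the estimates from the previous step, the four conditions of \autoref{def:various-equivalences} are met by $(u_t)_{t\geq1}$ with target $(\psi,\Iv)\oplus_{r_1,r_2}(\phi,\Iu)$ — the first two bullet points from the $*$-homomorphism estimate applied to $a$ and $a^*$, the last two from the cocycle estimate uniformly over compact $K$ — so $(\phi,\Iu)\asymp(\psi,\Iv)\oplus_{r_1,r_2}(\phi,\Iu)$, and hence $(\phi,\Iu)\asymp(\psi,\Iv)\oplus(\phi,\Iu)$ by the first paragraph.

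I do not expect a serious obstacle, since the conceptual core has already been extracted into \autoref{lem:the-O2-sum-lemma}. The one point requiring a little care is the interpolation step: one should interpolate the isometries $s_n$ — which works precisely because consecutive ranges are orthogonal, the purpose of the hypothesis $s_{n+1}^*s_n=0$ — rather than trying to interpolate the unitaries $u_n$ directly, and then verify that each of the four approximate-intertwining conditions passes to the interpolated path. Everything else is routine once one keeps straight the two instances $\kappa=\id_B$ (for the $*$-homomorphism parts) and $\kappa=\beta_g$ (for the cocycle parts) of \autoref{lem:the-O2-sum-lemma}.
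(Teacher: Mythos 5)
Your proof is correct and follows essentially the same strategy as the paper: interpolate the $s_n$ to a norm-continuous path of isometries using the orthogonality $s_{n+1}^*s_n=0$ (your $\cos/\sin$ parametrization is a cosmetic variant of the paper's $(1-t)^{1/2}/t^{1/2}$ one), then plug that path into the unitary formula from \autoref{lem:the-O2-sum-lemma} and invoke that lemma once with $\kappa=\id_B$ for the $*$-homomorphism part and once with $\kappa=\beta_g$ for the cocycle part. The small extra observations you make — that it suffices to work with one fixed pair $r_1,r_2$ because $\asymp$ is preserved under conjugation by a $\beta$-fixed unitary, and that the $a^*$-term and the starred cocycle term reduce to the given hypotheses — are implicit in the paper and you have filled them in correctly.
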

\begin{proof}
We may employ the same trick as in \cite[Lemmas 2.2+2.3]{DadarlatEilers01} and (as a consequence of the last bullet point) extend the sequence $(s_n)_n$ to a norm-continuous path of isometries $(s_t)_{t\geq 1}$ by defining 
\[
s_{n+t} = (1-t)^{1/2} s_n + t^{1/2}s_{n+1} \quad\text{for all } n\geq 1 \text{ and } t\in [0,1].
\]
By the properties of $s_n$ assumed above, this continuous path is seen to satisfy the following properties:
  \begin{equation} \label{eq:sca:1}
  \lim_{t\to\infty} \| s_t\psi(a)-\phi(a)s_t \| =0 \quad\text{for all } a\in A;
  \end{equation}
  \begin{equation} \label{eq:sca:2}
  s_t\psi(a)-\phi(a)s_t\in B \quad\text{for all } a\in A \text{ and } t\geq 1;
  \end{equation}
  \begin{equation} \label{eq:sca:3}
  \lim_{t\to\infty} \sup_{g\in K} \| s_t\Iv_g - \Iu_g\beta_g(s_t) \| = 0 \quad\text{for all compact sets } K\subseteq G;
  \end{equation}
  \begin{equation} \label{eq:sca:4}
  [(t,g) \mapsto s_t\Iv_g-\Iu_g\beta_g(s_t)] \text{ takes values in } B.
  \end{equation}
By assumption, we may pick a pair of isometries $r_1,r_2\in\CM(B)^\beta$ with $r_1r_1^*+r_2r_2^*=\eins$ commuting with the ranges of $\psi$ and $\Iv$.
Define a norm-continuous path of unitaries $u: [1,\infty)\to\CU(\CM(B))$ via
\[
u_t = (r_1 r_1^* + r_2 s_t r_2^*)s_t^*+r_2(\eins-s_ts_t^*) \in \CM(B).
\]
By \autoref{lem:the-O2-sum-lemma}, these are indeed unitaries.
When we apply the lemma for $\id_B$ in place of $\kappa$, then conditions \eqref{eq:sca:1} and \eqref{eq:sca:2} ensure that we have
\[
\lim_{t\to\infty} \|u_t\phi(a)u_t^*-(\psi\oplus_{r_1,r_2}\phi)(a)\|=0
\]
for all $a\in A$, and that the expressions appearing in the norm are elements in $B$.
When we apply the lemma for $\kappa\in\{\beta_g\}_{g\in G}$, we can use conditions \eqref{eq:sca:3} and \eqref{eq:sca:4} along with the identities $\Iv_g^*=\beta_g(\Iv_{g^{-1}})$ and $\Iu_g^*=\beta_g(\Iu_{g^{-1}})$ to compute
\[
\begin{array}{cl}
\multicolumn{2}{l}{\dst \lim_{t\to\infty}\max_{g\in K} \|u_t\Iu_g\beta_g(u_t)^*-(\Iv_g\oplus_{r_1,r_2}\Iu_g)\| } \\
\leq & \dst 5 \lim_{t\to\infty} \sup_{g\in K} \Big( \|s_t\Iv_g - \Iu_g\beta_g(s_t)\| + \|\beta_g(s_t)\Iv_g^* - \Iu_g^* s_t\| \Big) \\
=& \dst 5 \lim_{t\to\infty} \sup_{g\in K} \Big( \|s_t\Iv_g - \Iu_g\beta_g(s_t)\| + \|s_t\Iv_{g^{-1}} - \Iu_{g^{-1}} \beta_{g^{-1}} s_t\| \Big) \\
=& 0
\end{array}
\]
for every compact set $K\subseteq G$.
Furthermore the expressions appearing in the norms are elements of $B$.
This finishes the proof.
\end{proof}

\begin{rem}
Since it is useful for our companion paper \cite{GabeSzabo24kp}, let us observe a few things about the statement in \autoref{lem:sufficient-criterion-absorption}.
Firstly, if there exists a unital inclusion $\CO_\infty\subseteq\CM(B)^\beta$ that commutes with the ranges of $\phi$ and $\Iu$, then modulo passing to a different sequence of isometries, the last bullet point is redundant.
This is because given isometries $R_1, R_2$ in this copy of $\CO_\infty$ with $R_1^*R_2=0$, one may replace $s_n$ by $R_1s_n$ when $n$ is odd and by $R_2s_n$ when $n$ is even.
This does not disturb any of the first four bullet points, but forces the property $s_{n+1}^*s_n=0$ in the last bullet point.

Secondly, we may consider what happens in a special case of \autoref{lem:sufficient-criterion-absorption}.
In addition to the inclusion $\CO_\infty\subseteq\CM(B)^\beta$ above, assume that $(\phi,\Iu)$ and $(\psi,\Iv)$ are proper cocycle morphisms (see \cite[Definition 1.10(iii)]{Szabo21cc}), meaning that $\phi$ and $\psi$ have values in $B$, and $\Iu$ and $\Iv$ have values in $\CU(\eins+B)$.
In this case, we may assume $s_1=R_1$ and carry out the rest of the proof as before.
This gives rise to a norm-continuous path of unitaries $u: [1,\infty)\to\CU(\CM(B))$ witnessing $(\phi,\Iu)\oplus(\psi,\Iv)\asymp (\phi,\Iu)$ that additionally satisfies $u_1\in\CM(B)^\beta$.
Since the cocycles $\Iu$ and $\Iv$ have values in $\CU(\eins+B)$, the relation $u_t\Iu_g\beta_g(u_t)^*-(\Iv_g\oplus\Iu_g)\in B$ for all $g\in G$ amounts to $u_t\in\CM^\beta(B)$.
\end{rem}

\begin{lemma} \label{lem:abs-inf-repeat}
Suppose $\beta$ is strongly stable.
Let $(\phi,\Iu), (\psi,\Iv): (A,\alpha)\to (\CM(B),\beta)$ be two cocycle representations.
The following are equivalent:
\begin{enumerate}[label=\textup{(\roman*)},leftmargin=*]
\item \label{lem:abs-inf-repeat:1}
$(\psi,\Iv)$ is contained in $(\phi,\Iu)$ at infinity;
\item \label{lem:abs-inf-repeat:2}
$(\psi^\infty,\Iv^\infty)$ is contained in $(\phi,\Iu)$ at infinity;
\item \label{lem:abs-inf-repeat:3}
there exists an isometry $S\in\CM(B)$ such that
\[
\psi^\infty(a)-S^*\phi(a)S\in B \quad\text{for all } a\in A
\]
and the map
\[
\big[ g\mapsto \Iv_g^\infty-S^*\Iu_g\beta_g(S) \big]
\]
takes values in $B$ and is norm-continuous.
\item \label{lem:abs-inf-repeat:4}
$(\phi,\Iu)\asymp (\psi^\infty,\Iv^\infty)\oplus(\phi,\Iu)$.
\end{enumerate}
\end{lemma}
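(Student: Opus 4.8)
I would prove the cycle $\mathrm{(iv)}\Rightarrow\mathrm{(iii)}\Rightarrow\mathrm{(i)}\Rightarrow\mathrm{(iv)}$ and then deduce $\mathrm{(ii)}$ by a soft argument. Strong stability of $\beta$ lets me fix $\beta$-invariant isometries $t_n\in\CM(B)^\beta$ with $\sum_nt_nt_n^*=\eins$ realizing $(\psi^\infty,\Iv^\infty)=\bigoplus_n(\psi,\Iv)$ as in \autoref{def:inf-repeat}, together with a second family $T_n\in\CM(B)^\beta$ witnessing a unitary equivalence $((\psi^\infty)^\infty,(\Iv^\infty)^\infty)\cong(\psi^\infty,\Iv^\infty)$ by a unitary in $\CM(B)^\beta$ (legitimate by strong stability); passing to the latter picture of $(\psi^\infty,\Iv^\infty)$ when convenient, we get $\psi^\infty(a)=\sum_nT_n\psi^\infty(a)T_n^*$ and $\Iv^\infty_g=\sum_nT_n\Iv^\infty_gT_n^*$, so there is a unital copy of $\CO_\infty$ inside $\CM(B)^\beta$ commuting with the ranges of $\psi^\infty$ and $\Iv^\infty$.

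For $\mathrm{(iv)}\Rightarrow\mathrm{(iii)}$: if $u\colon[1,\infty)\to\CU(\CM(B))$ witnesses $(\phi,\Iu)\asymp(\psi^\infty,\Iv^\infty)\oplus(\phi,\Iu)$ for a Cuntz sum formed with $\rho_1,\rho_2\in\CM(B)^\beta$, then $S:=u_1^*\rho_1$ is an isometry, and the identities $S^*\phi(a)S-\psi^\infty(a)=\rho_1^*\big(u_1\phi(a)u_1^*-(\psi^\infty(a)\oplus\phi(a))\big)\rho_1$ and $S^*\Iu_g\beta_g(S)-\Iv^\infty_g=\rho_1^*\big(u_1\Iu_g\beta_g(u_1)^*-(\Iv^\infty_g\oplus\Iu_g)\big)\rho_1$ show that both expressions take values in $B$ by the defining properties of $\asymp$ in \autoref{def:various-equivalences}; the cocycle difference is automatically norm-continuous because $\ad(u_1)\circ(\phi,\Iu)$ and $(\psi^\infty,\Iv^\infty)\oplus(\phi,\Iu)$ form a Cuntz pair (cf.\ the footnote to \autoref{def:equi-Cuntz-pair}). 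For $\mathrm{(iii)}\Rightarrow\mathrm{(i)}$: put $d_a:=S^*\phi(a)S-\psi^\infty(a)\in B$ and $e_g:=S^*\Iu_g\beta_g(S)-\Iv^\infty_g\in B$, and set $\sigma_n:=ST_nt_1$. Using $T_n^*\psi^\infty(a)T_n=\psi^\infty(a)$, $t_1^*\psi^\infty(a)t_1=\psi(a)$ and their cocycle analogues, one computes $\sigma_n^*\phi(a)\sigma_n=\psi(a)+t_1^*T_n^*d_aT_nt_1$ and $\sigma_n^*\Iu_g\beta_g(\sigma_n)=\Iv_g+t_1^*T_n^*e_gT_nt_1$. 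Since $T_n^*\to0$ strictly and the differences lie in $B$ (the cocycle one being norm-continuous, so $\{e_g:g\in K\}$ is norm-compact), the error terms are bounded by $\|d_aT_n\|\to0$ and $\max_{g\in K}\|e_gT_n\|\to0$, while $\|\sigma_n^*b\|\le\|T_n^*S^*b\|\to0$ for any $b\in B$. Hence, given a quadruple $(K,\CF,\eps,b)$ as in \autoref{def:contained-at-infty}, the element $x:=\sigma_nb$ satisfies all three required estimates for $n$ large.

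The substantial step is $\mathrm{(i)}\Rightarrow\mathrm{(iv)}$. After replacing $(\phi,\Iu)$ and $(\psi,\Iv)$ by their unitalizations --- permissible because containment at infinity is insensitive to unitalization (see the remark on unitalizations preceding \autoref{prop:wc-observations}) --- I would run a Voiculescu--Kasparov successive-approximation argument to produce a sequence of isometries $(\sigma_k)_k\subseteq\CM(B)$ with pairwise orthogonal ranges such that $\sigma_k\psi^\infty(a)-\phi(a)\sigma_k\in B$ tends to $0$ for every $a$ and $\sigma_k\Iv^\infty_g-\Iu_g\beta_g(\sigma_k)\in B$ tends to $0$ uniformly on compact subsets of $G$. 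Each $\sigma_k$ is assembled as a strictly convergent series of pieces cut down by consecutive bands of a quasicentral, approximately $\beta$-invariant approximate unit of $B$ furnished by \autoref{lem:Kasparov}, the individual pieces being the elements $x$ provided by hypothesis $\mathrm{(i)}$ applied block by block along the $t_n$; the ``$\|x^*b\|\le\eps$'' clause of \autoref{def:contained-at-infty} is exactly what confines each piece to the intended band, which makes the partial sums Cauchy, kills the cross terms, and gives the $\sigma_k$ orthogonal ranges. Feeding $(\sigma_k)_k$ into \autoref{lem:sufficient-criterion-absorption}, with $(\psi^\infty,\Iv^\infty)$ in the role of $(\psi,\Iv)$ there and the copy of $\CO_\infty\supseteq\CO_2$ above in the role of $\CO_2$, yields $(\phi,\Iu)\asymp(\psi^\infty,\Iv^\infty)\oplus(\phi,\Iu)$.

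This establishes $\mathrm{(i)}\Leftrightarrow\mathrm{(iii)}\Leftrightarrow\mathrm{(iv)}$ for an arbitrary cocycle representation $(\psi,\Iv)$. Since $\mathrm{(ii)}$ is by definition $\mathrm{(i)}$ for the pair $\big((\psi^\infty,\Iv^\infty),(\phi,\Iu)\big)$, and since $((\psi^\infty)^\infty,(\Iv^\infty)^\infty)\cong(\psi^\infty,\Iv^\infty)$ makes statement $\mathrm{(iv)}$ for $\psi^\infty$ equivalent to statement $\mathrm{(iv)}$ for $\psi$ (as $\asymp$ is an equivalence relation that is preserved under Cuntz addition with a fixed representation), applying the already-proven equivalence $\mathrm{(i)}\Leftrightarrow\mathrm{(iv)}$ to the cocycle representation $\psi^\infty$ gives $\mathrm{(ii)}\Leftrightarrow\mathrm{(iv)}$, which completes the proof. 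The one technically demanding point is the construction in $\mathrm{(i)}\Rightarrow\mathrm{(iv)}$: it must simultaneously control the $*$-homomorphism part and, uniformly over compact subsets of $G$, the cocycle part, while keeping the constructed elements honest isometries modulo $B$ --- precisely where the equivariant quasicentral approximate units of \autoref{lem:Kasparov} and the ``at infinity'' clause carry the weight; everything else is bookkeeping with the infinite-repeat structure.
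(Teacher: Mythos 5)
Your cycle $\textup{(iv)}\Rightarrow\textup{(iii)}\Rightarrow\textup{(i)}\Rightarrow\textup{(iv)}$ and the deduction of $\textup{(ii)}$ by applying the established equivalence to $\psi^\infty$ in place of $\psi$ (and invoking $(\psi^\infty)^\infty\cong\psi^\infty$ via a unitary in $\CM(B)^\beta$) is a legitimate alternative architecture to the paper's $\textup{(i)}\Rightarrow\textup{(ii)}\Rightarrow\textup{(iii)}\Rightarrow\textup{(i)}$ and $\textup{(iii)}\Leftrightarrow\textup{(iv)}$. Your $\textup{(iv)}\Rightarrow\textup{(iii)}$ (take $S=u_1^*\rho_1$; the norm-continuity follows from the Cuntz-pair footnote) and your $\textup{(iii)}\Rightarrow\textup{(i)}$ (set $\sigma_n=ST_nt_1$ and let $T_n^*\to 0$ strictly, using norm-compactness of $\{e_g:g\in K\}$) are both correct and essentially match the paper's versions of those two arrows.

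The genuine gap is in $\textup{(i)}\Rightarrow\textup{(iv)}$, which you describe only as a plan. Two things are missing. First, the paper deliberately splits the Voiculescu--Kasparov work into an amplification step ($\textup{(i)}\Rightarrow\textup{(ii)}$, controlling cross terms between finitely many blocks inside $\psi^\infty$ while keeping the cut-down close to a contraction) and then a separate strictly-convergent-series step ($\textup{(ii)}\Rightarrow\textup{(iii)}$, with the quasicentral, approximately $\beta^{\Iv^\infty}$-invariant approximate unit of \autoref{lem:Kasparov}). Both require careful bookkeeping (the chains of estimates leading to the isometry $S=X|X|^{-1}$). Your sketch conflates them into one pass without showing why the partial sums converge strictly, why the result is an isometry after polar correction, or why the cocycle error map is norm-continuous; none of that is ``bookkeeping'' one can wave at. Second, and more specifically, you assert that the $\|x^*b\|\le\eps$ clause ``gives the $\sigma_k$ orthogonal ranges.'' It does not: that clause controls convergence of $\sigma_k^*b$ for prespecified $b\in B$, yielding at best \emph{approximate} orthogonality $\|\sigma_{k+1}^*\sigma_k\|$ small, whereas \autoref{lem:sufficient-criterion-absorption} demands $s_{n+1}^*s_n=0$ exactly (and without a unital copy of $\CO_\infty\subseteq\CM(B)^\beta$ commuting with the ranges of $\phi$ and $\Iu$, the remark following that lemma cannot be invoked to repair this). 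The paper sidesteps this cleanly by first producing a single isometry $S$ into $((\psi^\infty)^\infty,(\Iv^\infty)^\infty)$ and then taking $s_n=St_n$, for which orthogonality is exact. If you want to avoid routing through $\textup{(iii)}$, you would have to rebuild precisely this ``single $S$, then cut down along $t_n$'' device inside your $\textup{(i)}\Rightarrow\textup{(iv)}$ argument --- at which point you are doing $\textup{(i)}\Rightarrow\textup{(ii)}\Rightarrow\textup{(iii)}\Rightarrow\textup{(iv)}$ after all.
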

\begin{proof}
We will proceed in the order \ref{lem:abs-inf-repeat:1}$\Rightarrow$\ref{lem:abs-inf-repeat:2}$\Rightarrow$\ref{lem:abs-inf-repeat:3}$\Rightarrow$\ref{lem:abs-inf-repeat:1} and \ref{lem:abs-inf-repeat:3}$\Leftrightarrow$\ref{lem:abs-inf-repeat:4}.
For the rest of the proof, we let $t_n\in\CM(B)^\beta$ be a sequence of isometries with $\sum_{n=1}^\infty t_nt_n^*=\eins$ in the strict topology.
All infinite repeats are understood to be formed via this sequence.

\ref{lem:abs-inf-repeat:1}$\Rightarrow$\ref{lem:abs-inf-repeat:2}:
Let compact sets $K\subseteq G$, $\CF^A\subset A$, and $\eps> 0$ be given.
Without loss of generality assume $K=K^{-1}$ and $\CF^A=\CF_A^*$ is in the unit ball of $A$.

Suppose that $\CF^B\subset B$ is a compact set and $b\in B$ is a contraction.

Let $N\in \mathbb N$ such that $\| b - \sum_{j=1}^N t_jt_j^\ast b \| \leq \eps/8$ and pick a positive contraction $b_0\in B$ such that $\| b_0 t_j^\ast b- t_j^\ast b\| \leq \eps/(8N)$ for $j=1,\dots,N$.
Then
\begin{equation} \label{eq:air:1i2-1}
\|b-b_{0,N}b\|\leq \eps/4 \quad\text{for}\quad b_{0,N} = \sum_{j=1}^N t_jb_0t_j^*.
\end{equation}
Using the statement in \ref{lem:abs-inf-repeat:1}, we may recursively choose contractions $x_1,\dots,x_N\in B$ satisfying
\begin{equation} \label{eq:air:1i2-2}
\max_{1\leq j\leq N} \max_{g\in K} \| b_0\Iv_g\beta_g(b_0)-x_j^*\Iu_g\beta_g(x_j) \| \leq \eps/4;
\end{equation}
\begin{equation} \label{eq:air:1i2-3}
\max_{1\leq j\leq N} \max_{a\in\CF^A} \| b_0\psi(a)b_0 - x_j^*\phi(a)x_j \| \leq \eps/4;
\end{equation}
\begin{equation} \label{eq:air:1i2-4}
\max_{1\leq j\leq N} \max_{1\leq l<j} \Big( \max_{a\in\CF^A} \| x_j^*\phi(a)x_l \| + \max_{g\in K} \| x_j^*\Iu_g\beta_g(x_l) \| \Big) \leq 2^{-N^2}\eps/4;
\end{equation}
\begin{equation} \label{eq:air:1i2-5}
\max_{1\leq j\leq N} \max_{c\in\CF^B} \| x_j^*c \| \leq \eps/N.
\end{equation}
We define an element $x\in B$ via $x=\sum_{j=1}^N x_jt_j^*$.
We observe $\dst\max_{c\in\CF^B} \|x^*c\| \leq \eps$ as a consequence of \eqref{eq:air:1i2-5}.
We have for all $g\in K$ that
\[
\begin{array}{cclcl}
x^*\Iu_g\beta_g(x) &=& \dst \sum_{j,k=1}^N t_j x_j^* \Iu_g \beta_g(x_k) t_k^* 
&\stackrel{\eqref{eq:air:1i2-4}}{=}_{\makebox[0pt]{\footnotesize $\eps/4$}}& \dst \sum_{j=1}^N t_j x_j^* \Iu_g \beta_g(x_j) t_j^* \\
&\stackrel{\eqref{eq:air:1i2-2}}{=}_{\makebox[0pt]{\footnotesize $\eps/4$}}& \dst \sum_{j=1}^N t_j b_0\Iv_g\beta_g(b_0) t_j^*
&=& b_{0,N}\Iv^\infty_g \beta_g(b_{0,N}).
\end{array}
\]
If we consider this computation, we shall multiply both sides with $b^*$ from the left and with $\beta_g(b)$ from the right, and use \eqref{eq:air:1i2-1} to obtain
\[
\max_{g\in K} \| b^*\Iv_g^\infty\beta_g(b)-(xb)^*\Iu_g\beta_g(xb) \| \leq \eps.
\]
Moreover we have for all $a\in\CF^A$ that
\[
\begin{array}{cclcl}
x^*\phi(a)x &=& \dst \sum_{j,k=1}^N t_j x_j^* \phi(a) x_k t_k^* 
&\stackrel{\eqref{eq:air:1i2-4}}{=}_{\makebox[0pt]{\footnotesize $\eps/4$}}& \dst \sum_{j=1}^N t_j x_j^* \phi(a) x_j t_j^* \\
&\stackrel{\eqref{eq:air:1i2-3}}{=}_{\makebox[0pt]{\footnotesize $\eps/4$}}& \dst \sum_{j=1}^N t_j b_0\psi(a)b_0 t_j^* 
&=& b_{0,N} \psi^\infty(a) b_{0,N}.
\end{array}
\]
Similarly as above we consider this computation, multiply both sides with $b^*$ from the left and with $b$ from the right, and use \eqref{eq:air:1i2-1} to obtain
\[
\max_{a\in\CF^A} \| b^*\psi^\infty(a)b - (xb)^*\phi(a)(xb) \| \leq \eps.
\]
Clearly we have $\max_{c\in\CF^B} \|(xb)^*c\| \leq \max_{c\in\CF^B} \|x^*c\| \leq \eps$.
Thus the element $xb\in B$ satisfies the properties as required by \autoref{def:contained-at-infty} in place of $x$.
This verifies condition \ref{lem:abs-inf-repeat:2}.

%%%
\ref{lem:abs-inf-repeat:2}$\Rightarrow$\ref{lem:abs-inf-repeat:3}:
Let $\CF^A\subset A$ be a self-adjoint compact set in the unit ball whose closed linear span is $A$.
This exists by the separability of $A$.
Fix a constant $0<\eps<\frac12$.
Choose an increasing sequence $1_G\in K_n\subseteq G$ of compact sets with $G=\bigcup_{n\geq 1} \operatorname{int}(K_n)$.

By \autoref{lem:Kasparov}, $B$ admits a countable, increasing, approximately $\beta^{\Iv^\infty}$-invariant approximate unit $(e_n)_{n\geq 1}$ of positive contractions that is quasicentral relative to $\psi^\infty(A)$.
We denote $f_1=e_1^{1/2}$ and $f_n=(e_{n}-e_{n-1})^{1/2}$ for $n\geq 2$.
By passing to a subsequence of the $e_n$, if necessary, we may assume 
\begin{equation} \label{eq:air:2i3-1}
\max_{g\in K_n} \| f_n-\beta^{\Iv^\infty}_g(f_n) \| \leq 2^{-(n+1)}\eps
\end{equation}
and
\begin{equation} \label{eq:air:2i3-2}
\max_{a\in \CF^A} \| [\psi^\infty(a),f_n] \| \leq 2^{-n}\eps
\end{equation}
for all $n\geq 1$.
By our choice of $f_n$, we can apply condition \ref{lem:abs-inf-repeat:2} inductively and find a sequence of contractions $x_n\in B$ satisfying the following properties for all $n\geq 1$:
\begin{equation} \label{eq:air:2i3-3}
\max_{g\in K_n} \| f_n\Iv^\infty_g\beta_g(f_n) - x_n^*\Iu_g\beta_g(x_n) \| \leq 2^{-(n+1)}\eps;
\end{equation}
\begin{equation} \label{eq:air:2i3-4}
\max_{g\in K_n} \Big( \| x_n^*\Iu_g\beta_g(x_j) \| + \| \beta_g(x_n)^* \Iu^*_g x_j \| \Big) \leq 2^{-(2+j+n)}\eps,\quad j<n;
\end{equation}
\begin{equation} \label{eq:air:2i3-5}
\max_{a\in \CF^A} \| f_n\psi^\infty(a)f_n - x_n^*\phi(a)x_n \|\leq 2^{-n}\eps;
\end{equation}
\begin{equation} \label{eq:air:2i3-6}
\max_{a\in \CF^A} \| x_n^*\phi(a)x_j \| \leq 2^{-(1+n+j)}\eps,\quad j<n;
\end{equation}
\begin{equation} \label{eq:air:2i3-7}
\| x_n^* e_j \| \leq 2^{-n}\eps,\quad j\leq n.
\end{equation}
We claim that the series $X = \sum_{n=1}^\infty x_n$ converges in the strict topology and therefore yields an element $X\in\CM(B)$.
Indeed, by using \eqref{eq:air:2i3-3} (with $g=1$), we have for any pair of natural numbers $k\leq \ell$ that
\[
\Big\| \sum_{n=k}^\ell x_n^*x_n - \sum_{n=k}^\ell f_n^2 \Big\| \leq \sum_{n=k}^\ell 2^{-(n+1)}\eps \leq 2^{-k}\eps.
\]
Since we have $\sum_{j=1}^\infty f_j^2=\eins$ in the strict topology, we conclude that the sequence of partial sums $\ell\mapsto \sum_{n=1}^\ell x_n^*x_n$ satisfies the Cauchy criterion in the strict topology.
If $b\in B$ is a contraction, this implies for all pairs of natural numbers $k\leq\ell$ that
\[
\begin{array}{ccl}
\dst \Big\| \sum_{n=k}^\ell x_n b \Big\|^2 &=& \dst\Big\| b^*\Big( \sum_{j,n=k}^\ell x_n^* x_j \Big)b \Big\| \\
&\stackrel{\eqref{eq:air:2i3-4}}{\leq}& \dst \sum_{j,n=k\atop j<n}^\ell 2^{-(2+j+n)}\eps+ \Big\| b^*\Big( \sum_{n=k}^\ell x_n^* x_n \Big)b \Big\| \\
&\leq& \dst 2^{-k}\eps+ \Big\| b^*\Big( \sum_{n=k}^\ell x_n^* x_n \Big)b \Big\|.
\end{array}
\]
In particular, the sequence $\ell\mapsto \big(\sum_{n=1}^\ell x_n\big)b$ satisfies the Cauchy criterion.
Note that this last computation is valid even when choosing $b=\eins$, so we see that $\ell\mapsto\big(\sum_{n=1}^\ell x_n\big)$ is a norm-bounded sequence.
On the other hand, condition \eqref{eq:air:2i3-7} implies that the sequence $\ell\mapsto \big(\sum_{n=1}^\ell x_n^*\big) e_j$ satisfies the Cauchy criterion for all $j\geq 1$.
Since the sum is uniformly bounded in norm and the $e_j$ were an approximate unit of $B$ by assumption, we may conclude with a standard $\eps/2$-argument that the sequence $\ell\mapsto \big(\sum_{n=1}^\ell x_n^*\big) b$ satisfies the Cauchy criterion for all $b\in B$.
We may hence conclude that the series in question converges in the strict topology, or in other words, that $X$ exists.

Next we check that $X$ satisfies similar properties as required by the elements $S$ in the statement of \ref{lem:abs-inf-repeat:3}.
As in the proof of \cite[Theorem 5]{Kasparov80}, we check for all $a\in\CF^A$ the equality modulo $B$
\[
\begin{array}{cclcl}
X^*\phi(a)X &=& \dst \sum_{k,n=1}^\infty x_k^*\phi(a)x_n
&\stackrel{\eqref{eq:air:2i3-6}}{\equiv}& \dst \sum_{n=1}^\infty x_n^*\phi(a)x_n  \\
&\stackrel{\eqref{eq:air:2i3-5}}{\equiv}& \dst \sum_{n=1}^\infty f_n\psi^\infty(a)f_n  & \stackrel{\eqref{eq:air:2i3-2}}{\equiv} & \psi^\infty(a).
\end{array}
\]
Here we used that the pointwise differences between the involved terms in each step are absolutely convergent series in $B$.
We see that $X^*\phi(a)X-\psi^\infty(a)\in B$ holds for all $a\in\CF^A$.
By the choice of $\CF^A$, the latter relation extends to all $a\in A$.
Next, given any element $g\in G$, let $j\geq 1$ be the smallest number with $g\in K_j$.
Then we have
\[
\begin{array}{ccl}
X^*\Iu_g \beta_g(X) &\hspace{4mm}=\hspace{4mm}& \dst \sum_{k,n=1}^\infty x_k^*\Iu_g \beta_g(x_n) \\
&\stackrel{\eqref{eq:air:2i3-4}}{=}_{\makebox[0pt]{\footnotesize $2^{-j}\eps$}} & \dst \sum_{k,n<j} x_k^*\Iu_g \beta_g(x_n) + \sum_{n\geq j} x_n^*\Iu_g\beta_g(x_n) \\
&\stackrel{\eqref{eq:air:2i3-3}}{=}_{\makebox[0pt]{\footnotesize $2^{-j}\eps$}} & \dst \sum_{k,n<j} x_k^*\Iu_g \beta_g(x_n) + \sum_{n\geq j} f_n\Iv^\infty_g\beta_g(f_n) \\
&\stackrel{\eqref{eq:air:2i3-1}}{=}_{\makebox[0pt]{\footnotesize $2^{-j}\eps$}} & \dst \sum_{k,n<j} x_k^*\Iu_g \beta_g(x_n) + \sum_{n\geq j} f_n^2\Iv^\infty_g \\
&=& \dst \Iv^\infty_g + \Big( \sum_{k,n<j} x_k^*\Iu_g \beta_g(x_n) - \sum_{n<j} f_n^2\Iv^\infty_g \Big).
\end{array}
\]
In each of these approximation steps, we see that all the maps on $G$ defined by the intermediate differences take values in $B$ and are continuous.
So indeed the assignment $[g\mapsto \Iv_g^\infty-X^*\Iu_g \beta_g(X)]$ takes values in $B$ and is continuous.
If furthermore $g=1_G$, then $j=1$ above, so the very last term in the above calculation vanishes and we conclude $X^*X\in \eins+B$ and
\[
\|\eins-X^*X\|< 2\eps<1.
\]
We claim that the isometry $S=X|X|^{-1}$ does the trick.
From the above we can immediately conclude $\psi^\infty(a)-S^*\phi(a)S\in B$ for all $a\in A$, as well as $\Iv^\infty_g-S^*\Iu_g\beta_g(S)\in B$ for all $g\in G$.
Since
\[
\begin{array}{cl}
\multicolumn{2}{l}{ \Iv^\infty_g-S^*\Iu_g\beta_g(S) }\\
=& \Iv^\infty_g-|X|^{-1}X^*\Iu_g\beta_g(X|X|^{-1})  \\
=& \Iv^\infty_g - X^*\Iu_g\beta_g(X) + X^*\Iu_g\beta_g(X(\eins-|X|^{-1}))  + (\eins-|X|^{-1})X^*\Iu_g\beta_g(X|X|^{-1}) 
\end{array}
\]
and $|X|\in\eins+B$, the strict continuity of the cocycles forces the assignment $[ g\mapsto \Iv^\infty_g-S^*\Iu_g\beta_g(S)]$ to be continuous in norm.

\ref{lem:abs-inf-repeat:3}$\Rightarrow$\ref{lem:abs-inf-repeat:1}:
Let $S\in\CM(B)$ be an isometry as in the statement of \ref{lem:abs-inf-repeat:3}.
Let $b\in B$ be a contraction. 
Recall that $\psi^\infty(\bullet) = \sum_{j=1}^\infty t_j \psi(\bullet) t_j^*$ and $\Iv^\infty_\bullet = \sum_{j=1}^\infty t_j \Iv_\bullet t_j^*$.
Define $x_n = S t_{n} b$.
Then clearly $x_n^*b\to 0$.

As $t_{n}^* \psi^\infty(a) t_{n} = \psi(a)$ for all $n\geq 1$ and $a\in A$, we get
\[
\| x_n^\ast \phi(a) x_n - b^\ast \psi(a) b \|  = \| b^\ast t_{n}^* (\underbrace{ S^* \phi(a) S - \psi^\infty(a)}_{\in B}) t_{n} b \| \to 0.
\]
Similarly we get for any compact set $K\subseteq G$ that
\[
\max_{g\in K} \| x_n^\ast \Iu_g \beta_g(x_n) - b^* \Iv_g \beta_g(b)\| = \max_{g\in K} \| b^* t_{n}^\ast ( S^\ast \Iu_g \beta_g(S) - \Iv_g^\infty) t_{n} b \| \to 0. 
\]
Here we used the continuity assumption in \ref{lem:abs-inf-repeat:3} to conclude that the terms appearing in the middle bracket form a compact subset inside $B$.
We see that $(\psi , \Iv)$ is contained in $(\phi, \Iu)$ at infinity.

\ref{lem:abs-inf-repeat:3}$\Rightarrow$\ref{lem:abs-inf-repeat:4}:
We consider the double-infinite repeat 
\[
(\psi^\infty)^\infty(a)=\sum_{n,m=1}^\infty t_nt_m\psi(a)t_m^*t_n^*
\] 
for all $a\in A$, and analogously define $(\Iv^\infty)^\infty$.
Since $(\psi^\infty,\Iu^\infty)$ is unitarily equivalent to $\big( (\psi^\infty)^\infty, (\Iu^\infty)^\infty \big)$, we may conclude the existence of an isometry $S\in\CU(\CM(B))$ such that 
\[
S^*\phi(a)S-(\psi^\infty)^\infty(a)\in B \quad\text{and}\quad S^*\Iu_g\beta_g(S)-(\Iv^\infty)_g^\infty\in B
\]
for all $a\in A$ and $g\in G$, and such that the right-most term defines a norm-continuous map on $G$.
For any $n\geq 1$, we have $t_n^*(\psi^\infty)^\infty(a)t_n=\psi^\infty(a)$ and $t_n^*(\Iv^\infty)_g^\infty t_n=\Iv^\infty_g$.
Furthermore we have $t_n^*bt_n\to 0$ for all $b\in B$.
This allows us to conclude that the sequence of isometries $s_n=St_n$ satisfies
\[
B \ \ni \ t_n^*\big( S^*\phi(a)S-(\psi^\infty)^\infty(a) \big) t_n = s_n^*\phi(a)s_n-\psi^\infty(a) \stackrel{n\to\infty}{\longrightarrow} 0
\]
for all $a\in A$.
Analogously we conclude
\[
B \ \ni \ t_n^*\big( S^*\Iu_g\beta_g(S)-(\Iv^\infty)_g^\infty \big) t_n = s_n^*\Iu_g \beta_g(s_n)-\Iv^\infty_g \stackrel{n\to\infty}{\longrightarrow} 0
\]
for all $g\in G$, and this convergence is uniform over compact subsets of $G$.

As in \cite{Kasparov80} or the proof of \cite[Proposition 2.15]{DadarlatEilers02}, we observe
\[
\begin{array}{cl}
\multicolumn{2}{l}{ (s_n\psi^\infty(a)-\phi(a)s_n)^*(s_n\psi^\infty(a)-\phi(a)s_n) } \\
=& (s_n^*\phi(a^*a)s_n-\psi^\infty(a^*a)) + (\psi^\infty(a^*)-s_n^*\phi(a^*)s_n)\psi^\infty(a) + \\
& + \psi^\infty(a^*)(\psi^\infty(a)-s_n^*\phi(a)s_n),
\end{array}
\]
which leads to $\dst\lim_{n\to\infty} \| s_n\psi^\infty(a)-\phi(a)s_n \|=0$ for all $a\in A$.
We also see that $s_n\psi^\infty(a)-\phi(a)s_n \in B$ for all $a\in A$.
Similarly we observe 
\[
\begin{array}{cl}
\multicolumn{2}{l}{ (s_n\Iv^\infty_g-\Iu_g\beta_g(s_n))^*(s_n\Iv^\infty_g-\Iu_g\beta_g(s_n)) } \\
=& 2\cdot\eins - \Iv^{\infty*}_g\cdot s_n^*\Iu_g\beta_g(s_n) - \beta_g(s_n)^*\Iu_g^* s_n \cdot\Iv^\infty_g,
\end{array}
\]
which leads to $\dst\lim_{n\to\infty} \max_{g\in K} \| s_n\Iv^\infty_g-\Iu_g\beta_g(s_n) \| = 0$ for all compact sets $K\subseteq G$.
We also see that the map $[g\mapsto s_n\Iv_g-\Iu_g\beta_g(s_n)]$ takes values in $B$.

By construction, the isometries $s_n$ have pairwise orthogonal ranges.
In order to complete the proof of the claim with \autoref{lem:sufficient-criterion-absorption}, it suffices to see that there is an inclusion $\CO_2\subseteq\CM(B)^\beta$ that commutes with the ranges of $\psi^\infty$ and $\Iv^\infty$.
This is indeed witnessed by the two isometries
\[
r_i = \sum_{n=1}^\infty t_{2n-i}t_{n}^* \in \CM(B)^\beta,\quad i=0,1,
\]
which satisfy the $\CO_2$-relation and commute with all such infinite repeats due to the simple observation that 
\[
\Big( \sum_{j=1}^\infty t_j x t_j^\ast \Big) r_i = \sum_{n=1}^\infty t_{2n-i} x t_n^\ast = r_i \Big( \sum_{j=1}^\infty t_j xt_j^\ast \Big),\quad i=0,1,
\]
for all $x\in \CM(B)$.

\ref{lem:abs-inf-repeat:4}$\Rightarrow$\ref{lem:abs-inf-repeat:3}:
Consider the isometry $t_\infty=\sum_{k=1}^\infty t_{k+1}t_k^*$, which fits into the equation $t_1t_1^*+t_\infty t_\infty^*=\eins$.
Let $U: [0,\infty)\to\CM(B)$ be a norm-continuous path witnessing the relation $(\phi,\Iu)\asymp (\phi,\Iu)\oplus (\psi^\infty,\Iv^\infty)$ in the sense of \autoref{def:various-equivalences}.
In particular, the unitary $U_0$ satisfies the properties
\[
U_0^*\phi(a)U_0 - \big( \phi(a) \oplus_{t_1,t_\infty} \psi^\infty(a)\big) \in B,\quad a\in A,
\]
and
\[
U_0^*\Iu_g\beta_g(U_0) -\big( \Iu_g \oplus_{t_1,t_\infty} \Iv_g^\infty) \in B,\quad g\in G.
\]
We set $S=U_0 t_\infty $.
This is an isometry, and it has the properties that
\[
S^*\phi(a)S - \psi^\infty(a) = t_\infty^*\big( U_0^*\phi(a)U_0 - \big( \phi(a) \oplus_{t_1,t_\infty} \psi^\infty(a)\big) \big) t_\infty \in B
\]
for all $a\in A$, and
\[
S^*\Iu_g\beta(S) - \Iv_g^\infty = t_\infty^* \big( U_0^*\Iu_g\beta_g(U_0) -\big( \Iu_g \oplus_{t_1,t_\infty} \Iv_g^\infty \big) \big) t_\infty \in B
\]
for all $g\in G$.
Moreover, because of \cite[Proposition 6.9]{Szabo21cc}, the middle expression in the brackets defines a norm-continuous map on $G$, hence the assignment $[g\mapsto S^*\Iu_g\beta(S) - \Iv_g^\infty]$ is also norm-continuous.
This finishes the proof.
\end{proof}

\begin{cor} \label{cor:coc-rep-absorption}
Suppose $\beta$ is strongly stable.
Let $(\phi,\Iu), (\psi,\Iv): (A,\alpha)\to (\CM(B),\beta)$ be two cocycle representations.
Then $(\psi,\Iv) \wc (\phi,\Iu)$ if and only if $(\phi^\infty,\Iu^\infty)\asymp (\psi^\infty,\Iv^\infty)\oplus(\phi^\infty,\Iu^\infty)$.
\end{cor}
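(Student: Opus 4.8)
The plan is to derive this from \autoref{lem:abs-inf-repeat} applied with the pair $\big((\phi^\infty,\Iu^\infty),(\psi,\Iv)\big)$ in place of $\big((\phi,\Iu),(\psi,\Iv)\big)$. Under that substitution, the equivalence \ref{lem:abs-inf-repeat:1}$\Leftrightarrow$\ref{lem:abs-inf-repeat:4} of \autoref{lem:abs-inf-repeat} reads: $(\psi,\Iv)$ is contained in $(\phi^\infty,\Iu^\infty)$ at infinity if and only if $(\phi^\infty,\Iu^\infty)\asymp(\psi^\infty,\Iv^\infty)\oplus(\phi^\infty,\Iu^\infty)$, which is exactly the right-hand side of the corollary. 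So it suffices to prove that $(\psi,\Iv)\wc(\phi,\Iu)$ if and only if $(\psi,\Iv)$ is contained in $(\phi^\infty,\Iu^\infty)$ at infinity.

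The ``if'' direction of this reformulation is soft: being contained at infinity implies weak containment by the observation following \autoref{def:contained-at-infty}, so $(\psi,\Iv)\wc(\phi^\infty,\Iu^\infty)$; combining this with $(\phi^\infty,\Iu^\infty)\wc(\phi,\Iu)$ from \autoref{prop:wc-observations}\ref{prop:wc-observations:4} and transitivity of $\wc$ gives $(\psi,\Iv)\wc(\phi,\Iu)$.

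For the ``only if'' direction, fix isometries $t_n\in\CM(B)^\beta$ with $\sum_n t_nt_n^*=\eins$ strictly, used to form all infinite repeats. Let a compact set $K\subseteq G$ (enlarged so that $1_G\in K$), a finite subset $\CF\subseteq A$, some $\eps>0$ and a contraction $b\in B$ be given. Since $(\psi,\Iv)\wc(\phi,\Iu)$ and the right-hand family is a singleton, we may take $\ell=1$ in \autoref{def:wc}, obtaining $N\in\IN$ and contractions $c_1,\dots,c_N\in B$ satisfying \eqref{eq:wc:1} and \eqref{eq:wc:2}; evaluating \eqref{eq:wc:1} at $g=1_G$ moreover gives $\sum_{k=1}^N c_k^*c_k\leq(1+\eps)\eins$, so $\|c_k\|\leq\sqrt2$. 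Using that $\sum_{n=1}^M t_nt_n^*b\to b$ in norm, choose $M$ so large that the projection $P_M:=\sum_{k=1}^N t_{k+M}t_{k+M}^*\leq\eins-\sum_{n=1}^M t_nt_n^*$ satisfies $\|P_Mb\|\leq\eps/\sqrt2$, and put $x:=\sum_{k=1}^N t_{k+M}c_k\in B$. Since $t_{k+M}^*t_{l+M}=\delta_{kl}\eins$ and each $t_{k+M}$ is $\beta$-fixed, a direct computation gives $x^*\phi^\infty(a)x=\sum_{k=1}^N c_k^*\phi(a)c_k$ and $x^*\Iu^\infty_g\beta_g(x)=\sum_{k=1}^N c_k^*\Iu_g\beta_g(c_k)$, so by \eqref{eq:wc:1} and \eqref{eq:wc:2} the first two defining conditions of \autoref{def:contained-at-infty} for $(\psi,\Iv)$ in $(\phi^\infty,\Iu^\infty)$ hold with this $x$; and since $x^*=x^*P_M$ we get $\|x^*b\|\leq\|x\|\,\|P_Mb\|\leq\sqrt2\cdot\eps/\sqrt2=\eps$, which is the third condition. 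Hence $(\psi,\Iv)$ is contained in $(\phi^\infty,\Iu^\infty)$ at infinity, as needed.

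I expect the only nonroutine point to be this ``shift to infinity'' construction in the last paragraph: converting the finitely many witnesses $c_k$ of weak containment into a single element $x\in B$ that simultaneously reproduces the required data inside the infinite repeat and is almost orthogonal to $b$ --- the latter feature being precisely what allows the reduction to \autoref{lem:abs-inf-repeat}. Everything else is an assembly of statements already established in this section.
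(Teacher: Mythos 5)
Your proposal is correct and follows essentially the same route as the paper: both reduce the right-hand side of the corollary to the statement that $(\psi,\Iv)$ is contained in $(\phi^\infty,\Iu^\infty)$ at infinity via \autoref{lem:abs-inf-repeat}, handle the easy direction by transitivity of $\wc$, and verify the hard direction by the same ``shift to infinity'' construction $x=\sum_k t_{k+M}c_k$ forcing near-orthogonality to $b$. The only cosmetic deviation is that you tune the estimate to obtain $\|x^*b\|\leq\eps$ directly via the bound $\|x\|\leq\sqrt{1+\eps}$, while the paper settles for $\|x^*b\|\leq\eps\|x\|$ and observes that $x$ is nearly a contraction.
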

\begin{proof}
For the rest of the proof, we fix a sequence of isometries $t_n\in\CM(B)^\beta$ with $\eins=\sum_{n=1}^\infty t_nt_n^*$ in the strict topology, and every instance of an infinite repeat shall use this sequence to define it.

Let us first show the ``if'' part.
Using \autoref{prop:wc-observations} and that $\asymp$ clearly implies weak containment, we can see that
\[
(\psi,\Iv) \wc (\psi^\infty,\Iv^\infty) \wc (\psi^\infty\oplus\phi^\infty,\Iv^\infty\oplus\Iu^\infty) \wc (\phi^\infty,\Iu^\infty) \wc (\phi,\Iu).
\]
So let us show the ``only if'' part.
Suppose $(\psi,\Iv) \wc (\phi,\Iu)$.
By \autoref{lem:abs-inf-repeat}, we need to show that $(\phi^\infty,\Iu^\infty)$ contains $(\psi,\Iv)$ at infinity.

Let $K\subseteq G, \CF\subset A$ and $\eps>0$ be given.
Without loss of generality, we assume $1_G\in K$.
Given a contraction $b\in B$, we may then choose contractions $d_1,\dots,d_n\in B$ as in \autoref{def:wc}.
Let $N\in\IN$ be large enough such that $\|(\eins-\sum_{n=1}^N t_nt_n^*)b\|\leq\eps$.
We define
\[
 x=\sum_{j=1}^n t_{N+j} d_j \in B.
\]
As we have $x^*(\eins-\sum_{n=1}^N t_nt_n^*)=x^*$ by construction, it satisfies $\| x^*b \| \leq \eps\|x\|$.
Given $g\in K$, we have
\[
x^*\Iu^\infty_g\beta_g(x)=\sum_{j=1}^n d_j^*\Iu_g\beta_g(d_j) =_\eps b^* \Iv_g \beta_g(b).
\]
The case $g=1_G$ shows $\|x^*x-b^*b\|\leq\eps$, so $x$ is close to a contraction when $\eps$ is chosen small.
For all $a\in\CF$, we have
\[
x^*\phi^\infty(a)x = \sum_{j=1}^n d_j^*\phi(a)d_j =_\eps b^*\psi(a)b.
\]
As the quadruple $(K,\CF,\eps,b)$ was arbitrary, we see that the condition as required by \autoref{def:contained-at-infty} is satisfied so that $(\phi^\infty,\Iu^\infty)$ contains $(\psi,\Iv)$ at infinity.
\end{proof}

\begin{defi}
Suppose $\beta$ is strongly stable.
Let $\FC$ be a set of cocycle representations from $(A,\alpha)$ to $(\CM(B),\beta)$ that is closed under unitary equivalence with respect to unitaries in $\CM(B)^\beta$.
We say that $\FC$ is \emph{$\sigma$-additive}, if for every sequence $(\phi^{(n)},\Iu^{(n)})_{n\in\IN}$ in $\FC$, the Cuntz sum
\[
\bigoplus_{n\in\IN} ( \phi^{(n)}, \Iu^{(n)} ) : (A,\alpha) \to (\CM(B),\beta)
\]
also belongs to $\FC$.
We say that a cocycle representation $(\theta,\Ix)\in\FC$ is \emph{absorbing in $\FC$}, if $(\theta\oplus\phi,\Ix\oplus\Iu)\asymp (\theta,\Ix)$ for all $(\phi,\Iu)\in\FC$.
\end{defi}

\begin{example}
It is clear from the definition that for any family of $\sigma$-additive sets of cocycle representations in the above sense, their intersection is also $\sigma$-additive.
Keeping this in mind, the following sets of cocycle representations from $(A,\alpha)$ to $(\CM(B),\beta)$ are $\sigma$-additive, as well as any intersection of such sets with each other.
  \begin{itemize}[leftmargin=5mm]
  \item The set of all cocycle representations $(\phi,\Iu): (A,\alpha)\to (\CM(B),\beta)$.
  An absorbing element in this class will simply be called an absorbing cocycle representation.
  \item Assuming $A$ is unital, the set of all cocycle representations $(\phi,\Iu): (A,\alpha)\to (\CM(B),\beta)$ such that $\phi$ is unital.
  \item The set of all cocycle representations $(\phi,\Iu): (A,\alpha)\to (\CM(B),\beta)$ such that $\phi$ is weakly nuclear.
  \item The set of all cocycle representations $(\phi,\Iu): (A,\alpha)\to (\CM(B),\beta)$ such that $\phi$ weakly belongs to a predetermined closed operator convex cone of completely positive maps $A\to B$; cf.\ \cite{KirchbergRordam02, GabeRuiz15}.
  \end{itemize}
\end{example}

\begin{rem}
Let us equip the set of all cocycle representations $(\phi,\Iu): (A,\alpha)\to (\CM(B),\beta)$ with the topology of point-strict convergence in the first variable, and uniform strict convergence over compact sets of $G$ in the second variable.
Let us denote this topology on the set of all cocycle representations by $\tau(\alpha,\beta)$.
As we assume that $B$ is $\sigma$-unital, the strict topology is metrizable on the unit ball of $\CM(B)$.
As $A$ is assumed to be separable and $G$ is second-countable, it follows that the topology $\tau(\alpha,\beta)$ is metrizable.
If it is moreover the case that $B$ is separable, then $\CM(B)$ is strictly separable and the topology $\tau(\alpha,\beta)$ is automatically separable.
\end{rem}

The following existence result for absorbing elements generalizes \cite[Theorems 2.4+2.7]{Thomsen01}, \cite[Theorem 3.14]{GabeRuiz15}, \cite[Theorem 5.7]{Thomsen05} and various similar results scattered throughout the literature.
(At first glance one might think that the cited theorem is partially stronger than the one proved here when it applies, but this is the case only to the extent that the underlying $*$-homomorphism $A\to\CM(B)$ can be chosen to be extendible in \cite{Thomsen05}.)
We note that our proof is partially new, but the new part is based only on a string of very elementary observations (plus some previous results in this section), which yields a drastic simplification even in the classical non-equivariant case that is well-known in the literature.
In particular, compared to the known proofs, it is not necessary to invoke the Kasparov--Stinespring dilation for completely positive maps.

\begin{theorem} \label{thm:abs-rep-existence}
Suppose $\beta$ is strongly stable.
Let $\FC$ be a $\sigma$-additive set of cocycle representations from $(A,\alpha)$ to $(\CM(B),\beta)$.
If $\FC$ is $\tau(\alpha,\beta)$-separable, then there exists an absorbing element in $\FC$.
In particular, if $B$ is separable, then all $\sigma$-additive sets of cocycle representations have an absorbing element.
\end{theorem}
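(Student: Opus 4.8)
The plan is to produce the absorbing element as the infinite repeat of a single countable Cuntz sum running over a $\tau(\alpha,\beta)$-dense subset of $\FC$, and to reduce the verification of absorption to a weak containment statement via \autoref{cor:coc-rep-absorption}. Before starting I would record the elementary properties of the relation $\asymp$ from \autoref{def:various-equivalences} that will be used repeatedly: it is reflexive; symmetric, since replacing a witnessing path $u$ by $t\mapsto u_t^\ast$ witnesses the reverse relation; transitive, since if $u$ witnesses $(\phi,\Iu)\asymp(\psi,\Iv)$ and $v$ witnesses $(\psi,\Iv)\asymp(\chi,\Iw)$, then $t\mapsto v_tu_t$ witnesses $(\phi,\Iu)\asymp(\chi,\Iw)$; compatible with Cuntz sums, since a diagonal path $u\oplus v$ works; and implied by unitary equivalence via a single unitary $w\in\CM(B)^\beta$, witnessed by the constant path $u_t\equiv w$ (here one uses that such a $w$ intertwines the cocycles exactly). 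Finally I would note the routine fact that $(\rho,\Iz)\oplus(\rho^\infty,\Iz^\infty)$ is unitarily equivalent to $(\rho^\infty,\Iz^\infty)$ via a unitary in $\CM(B)^\beta$, hence $(\rho,\Iz)\oplus(\rho^\infty,\Iz^\infty)\asymp(\rho^\infty,\Iz^\infty)$, by re-indexing the defining sequence of isometries.

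For the construction, use the $\tau(\alpha,\beta)$-separability of $\FC$ to fix a $\tau(\alpha,\beta)$-dense sequence $(\phi^{(n)},\Iu^{(n)})_{n\in\IN}$ in $\FC$ (assumed nonempty, as otherwise there is nothing to prove), and set $(\sigma,\Iw):=\bigoplus_{n\in\IN}(\phi^{(n)},\Iu^{(n)})$ and $(\theta,\Ix):=(\sigma^\infty,\Iw^\infty)$. By $\sigma$-additivity of $\FC$, first $(\sigma,\Iw)\in\FC$ and then $(\theta,\Ix)\in\FC$. I claim $(\theta,\Ix)$ is absorbing in $\FC$. Let $(\rho,\Iz)\in\FC$ be arbitrary. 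Since $\tau(\alpha,\beta)$ is metrizable, there is a subsequence of $(\phi^{(n)},\Iu^{(n)})_n$ converging to $(\rho,\Iz)$ in $\tau(\alpha,\beta)$; $\tau(\alpha,\beta)$-convergence along that subsequence supplies the hypothesis of \autoref{prop:wc-observations}\ref{prop:wc-observations:1}, so $(\rho,\Iz)\wc\{(\phi^{(n)},\Iu^{(n)})\}_{n\in\IN}$, using that weak containment becomes only easier as the containing family grows. By \autoref{prop:wc-observations}\ref{prop:wc-observations:3} this improves to $(\rho,\Iz)\wc(\sigma,\Iw)$. Now \autoref{cor:coc-rep-absorption}, applied with $(\phi,\Iu)=(\sigma,\Iw)$ and $(\psi,\Iv)=(\rho,\Iz)$ and recalling $(\theta,\Ix)=(\sigma^\infty,\Iw^\infty)$, yields
\[
(\theta,\Ix)\ \asymp\ (\rho^\infty,\Iz^\infty)\oplus(\theta,\Ix).
\]
Invoking the preliminary facts we then obtain
\[
(\rho,\Iz)\oplus(\theta,\Ix)\ \asymp\ (\rho,\Iz)\oplus(\rho^\infty,\Iz^\infty)\oplus(\theta,\Ix)\ \asymp\ (\rho^\infty,\Iz^\infty)\oplus(\theta,\Ix)\ \asymp\ (\theta,\Ix),
\]
where the first $\asymp$ combines compatibility of $\asymp$ with Cuntz sums and the displayed relation, the second uses $(\rho,\Iz)\oplus(\rho^\infty,\Iz^\infty)\asymp(\rho^\infty,\Iz^\infty)$, and the third is the displayed relation read backwards; transitivity then gives $(\rho,\Iz)\oplus(\theta,\Ix)\asymp(\theta,\Ix)$. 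As $(\rho,\Iz)$ was arbitrary, $(\theta,\Ix)$ is absorbing in $\FC$.

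Finally, if $B$ is separable then, as observed in the remark preceding the theorem, $\CM(B)$ is strictly separable and $\tau(\alpha,\beta)$ is a separable metric topology; since subspaces of separable metric spaces are separable, every $\sigma$-additive $\FC$ is $\tau(\alpha,\beta)$-separable and the first part applies. The only step of genuine content is the reduction of absorption to weak containment, which is already isolated as \autoref{cor:coc-rep-absorption} (resting in turn on \autoref{lem:abs-inf-repeat} and Kasparov's \autoref{lem:Kasparov}); the remaining work is the bookkeeping above, where the two points requiring care are that $\tau(\alpha,\beta)$-density is exactly what triggers \autoref{prop:wc-observations}\ref{prop:wc-observations:1}, and that the elementary manipulations of $\asymp$ (transitivity, compatibility with Cuntz sums, the constant-path argument, and the re-indexing identity) are set up correctly.
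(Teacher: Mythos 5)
Your proof is correct and follows essentially the same route as the paper's: produce a $\tau(\alpha,\beta)$-dense sequence, take its countable Cuntz sum, pass to the infinite repeat, and reduce absorption to weak containment via \autoref{cor:coc-rep-absorption} and \autoref{prop:wc-observations}. The only difference is that you spell out the elementary manipulations of $\asymp$ needed to pass from $(\theta,\Ix)\asymp(\rho^\infty,\Iz^\infty)\oplus(\theta,\Ix)$ to $(\rho,\Iz)\oplus(\theta,\Ix)\asymp(\theta,\Ix)$, whereas the paper compresses this into the phrase ``it follows from \autoref{cor:coc-rep-absorption}''.
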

\begin{proof}
As $\FC$ is $\sigma$-additive, it follows from \autoref{cor:coc-rep-absorption} that if there exists some $(\theta,\Ix)\in\FC$ such that $(\phi,\Iu)\wc (\theta,\Ix)$ for every $(\phi,\Iu)\in\FC$, then its infinite repeat $(\theta^\infty,\Ix^\infty)$ is absorbing.
By the assumption that $\FC$ is separable with respect to $\tau(\alpha,\beta)$, we can find a sequence of elements $(\theta^{(n)},\Ix^{(n)})\in\FC$ with the following property.
Given any cocycle representation $(\phi,\Iu)\in\FC$, any compacts sets $K\subseteq G$, $\CF^A\subset A$, $b\in B$, and $\eps\greater 0$, there exists some $n$ such that
\[
\max_{a\in\CF^A} \| (\phi(a)-\theta^{(n)}(a))b \| \leq \eps \quad\text{and}\quad
\max_{g\in K} \| (\Iu_g-\Ix^{(n)}_g)b \| \leq \eps.
\]
But due to \autoref{prop:wc-observations}, this implies $(\phi,\Iu)\wc\bigoplus_{n=1}^\infty (\theta^{(n)},\Ix^{(n)}) =: (\theta,\Ix)$ for every $(\phi,\Iu)\in\FC$.
This completes the proof.
\end{proof}

\begin{cor} \label{lem:standard-form-KK-classes}
Suppose $B$ is separable and $\beta$ is strongly stable.
Let $(\theta,\Iy): (A,\alpha)\to (\CM(B),\beta)$ be an absorbing cocycle representation (which exists by \autoref{thm:abs-rep-existence}).
Then:
\begin{enumerate}[label=\textup{(\roman*)},leftmargin=*] 
\item Every element $x\in \IE^G(\alpha,\beta)/{\sim_h}$ can be expressed as the equivalence class of an $(\alpha,\beta)$-Cuntz pair $\big( (\phi,\Iu), (\theta,\Iy) \big)$ for some absorbing cocycle representation $(\phi,\Iu): (A,\alpha)\to (\CM(B),\beta)$. \label{lem:standard-form-KK-classes:1}
\item Every element $z\in KK^G(\alpha,\beta)$ can be expressed as the equivalence class of an anchored $(\alpha,\beta)$-Cuntz pair $\big( (\phi,\Iu), (\theta,\Iy) \big)$ for some absorbing cocycle representation $(\phi,\Iu): (A,\alpha)\to (\CM(B),\beta)$. \label{lem:standard-form-KK-classes:2}
\end{enumerate}
\end{cor}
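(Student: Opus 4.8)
The plan is to prove (i) and then deduce (ii) from it. Throughout I would use freely that $\asymp$ is reflexive, transitive and compatible with Cuntz sums (via \autoref{lem:the-O2-sum-lemma}-type conjugations), so that the class of absorbing cocycle representations is closed under Cuntz sums and invariant under $\asymp$; this is what makes the prescribed $(\theta,\Iy)$ interchangeable with many of its relatives. For (i), let $x$ be represented by a Cuntz pair $\big((\phi_0,\Iu_0),(\psi_0,\Iv_0)\big)$. First I would add the degenerate pair $\big((\theta,\Iy),(\theta,\Iy)\big)$: by \autoref{lem:equivalence-simplified}\ref{lem:equivalence-simplified:1} together with commutativity of Cuntz addition this does not change the class, so $x$ is also represented by $\big((\phi_0,\Iu_0)\oplus(\theta,\Iy),\,(\psi_0,\Iv_0)\oplus(\theta,\Iy)\big)$, and now \emph{both} legs are absorbing because $(\theta,\Iy)$ is. It then remains to promote the second leg from $(\psi_0,\Iv_0)\oplus(\theta,\Iy)$ to the prescribed $(\theta,\Iy)$ while keeping the first leg absorbing, and this is the real content of the statement.

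The hard part will be turning the absorption $(\psi_0,\Iv_0)\oplus(\theta,\Iy)\asymp(\theta,\Iy)$ — which is only an \emph{asymptotic} unitary equivalence — into an honest homotopy of Cuntz pairs landing exactly on $(\theta,\Iy)$ as the distinguished leg. I would extract this from the folding machinery of Section~3: since $(\theta,\Iy)$ absorbs every cocycle representation, \autoref{lem:abs-inf-repeat}\ref{lem:abs-inf-repeat:3} and \autoref{cor:coc-rep-absorption} supply an isometry in $\CM(B)$ intertwining $(\psi_0,\Iv_0)\oplus(\theta,\Iy)$ with $(\theta,\Iy)$ modulo $B$, and this isometry can be fed into a \autoref{lem:sufficient-criterion-absorption}/\autoref{lem:the-O2-sum-lemma}-style construction to build a path of unitaries along which the superfluous summand is folded into $(\theta,\Iy)$. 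One then has to carry the first leg along this path and check — using that the path takes values in the relevant algebra $D_{(\cdot)}$ (as in the observation following \autoref{def:various-equivalences} and in \autoref{prop:D-unitaries}) and that the convergence built into $\asymp$ is in norm — that the resulting family is genuinely an element of $\IE^G(\alpha,\beta[0,1])$, i.e.\ a homotopy of Cuntz pairs rather than a mere asymptotic equivalence. The output is an absorbing cocycle representation $(\phi,\Iu)$ with $x=\big[\big((\phi,\Iu),(\theta,\Iy)\big)\big]$, which is (i); along the way one should note that the first leg remains absorbing because it is obtained from an absorbing representation by operations preserving $\asymp$.

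For (ii), by \autoref{prop:KKG-homotopy} every $z\in KK^G(\alpha,\beta)$ is represented by an \emph{anchored} Cuntz pair, and applying (i) to its class in $\IE^G(\alpha,\beta)/{\sim_h}$ yields an absorbing $(\phi,\Iu)$ with $z=\big[\big((\phi,\Iu),(\theta,\Iy)\big)\big]$. Since every step of the construction in (i) is a homotopy of Cuntz pairs (adding a degenerate pair, commuting a Cuntz sum, and the folding homotopy above), and since anchoredness is invariant under homotopy of Cuntz pairs — the cocycle-pair part of such a homotopy is itself a homotopy of cocycle pairs connecting $(\Iu,\Iy)$ to the corresponding cocycle pair of the anchored representative, hence to $(\eins,\eins)$ — the pair $\big((\phi,\Iu),(\theta,\Iy)\big)$ is again anchored, which completes the proof.
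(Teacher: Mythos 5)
The derivation of (ii) from (i) via ``anchoredness is preserved under $\sim_h$'' is correct and matches the paper in spirit, and your reduction in (i) to the case where both legs are absorbing (by adding the degenerate pair) is also fine. The gap is in the step you yourself flag as ``the real content'': ``carrying the first leg along the path'' does not produce a homotopy of Cuntz pairs, and the obstruction is not merely technical.

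Concretely, let $w\colon[0,\infty)\to\CU(\CM(B\otimes\CK))$ be a path witnessing $(\psi_0\oplus\theta,\Iv_0\oplus\Iy)\asymp(\theta,\Iy)$. If you conjugate \emph{both} legs by $w_t^*$, then at each finite $t$ you do get a Cuntz pair (conjugation by any unitary in $\CM(B\otimes\CK)$ preserves the Cuntz pair property), but as $t\to\infty$ only the second leg converges. Writing $c=(\phi_0-\psi_0)(a)\oplus 0\in B\otimes\CK$, the first leg is $w_t^*(\psi_0\oplus\theta)(a)w_t + w_t^*cw_t$; the first summand converges to $\theta(a)$, but $w_t^*cw_t$ need not converge even strictly, since nothing in the definition of $\asymp$ forces $w_t$ itself to converge. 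So the family does not extend to an element of $\IE^G(\alpha,\beta[0,1])$, and there is no homotopy ``landing exactly on $(\theta,\Iy)$'' of the kind you describe. Keeping the first leg \emph{fixed} and moving only the second fails differently: then $\phi_0\oplus\theta - \ad(w_t^*)\circ(\psi_0\oplus\theta)\equiv \psi_0\oplus\theta - \theta \pmod{B\otimes\CK}$, which is not in $B\otimes\CK$ unless $\psi_0$ maps into $B\otimes\CK$, so the intermediate pairs are not Cuntz pairs at all.

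What the paper does instead, and what your proof is missing, is a purely algebraic rearrangement that makes the convergence problem disappear. Conjugate both legs by the single fixed unitary $w_0^*$; this does not change the class (any $w\in\CU(\CM(B\otimes\CK))$ is norm-connected to $\eins$, and the resulting path of pairs lies in $\IE^G(\alpha,\beta[0,1])$). Now observe that the Cuntz pair $\bigl(\ad(w_0^*)\circ(\psi_0\oplus\theta,\Iv_0\oplus\Iy),(\theta,\Iy)\bigr)$ is homotopic to a degenerate one: the path $t\mapsto\bigl(\ad(w_t^*)\circ(\psi_0\oplus\theta,\Iv_0\oplus\Iy),(\theta,\Iy)\bigr)$ moves only its first leg, each stage is a Cuntz pair because the differences $w_t^*(\psi_0\oplus\theta)(a)w_t-\theta(a)$ lie in $B\otimes\CK$ by the definition of $\asymp$, and it does converge at $\infty$. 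Finally invoke the group law in $\IE^G(\alpha,\beta)/{\sim_h}$ (from \autoref{lem:equivalence-simplified}) in the form $[A,B]=[A,B]+[B,C]=[A,C]$ with $[B,C]=0$, to replace the second leg by $(\theta,\Iy)$. This is the step that cannot be replaced by directly homotoping the two-legged pair. Your detour through \autoref{lem:abs-inf-repeat} and \autoref{lem:sufficient-criterion-absorption} to re-manufacture an isometry is also unnecessary: absorption already hands you the $\asymp$ path, which is all that is used.
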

\begin{proof}
In light of \autoref{prop:KKG-homotopy}, part \ref{lem:standard-form-KK-classes:2} is a special case of \ref{lem:standard-form-KK-classes:1}, so we shall prove the latter.
Let us write $x=\big[ (\psi,\Iv), (\kappa,\Ix) \big]$ for an arbitrary $(\alpha,\beta)$-Cuntz pair of cocycle representations.
Since $(\theta,\Iy)$ is absorbing, it absorbs $(\kappa,\Ix)$, i.e., we find a norm-continuous path of unitaries $w: [0,\infty)\to\CU (\CM(B))$ satisfying
\[
\lim_{t\to\infty} \Big( \|w_t^*(\kappa\oplus\theta)(a)w_t-\theta(a)\| + \max_{g\in K}\|w_t^*(\Ix_g\oplus\Iy_g)\beta_g(w_t)-\Iy_g\| \Big)=0
\]
for all $a\in A$ and every compact set $K\subseteq G$, and so that the expressions in the norms above belong to $B$ pointwise.
By these properties, we have that the two cocycle representations
\[
(\theta,\Iy) \quad\text{and}\quad \ad(w_0^*)\circ(\kappa\oplus\theta,\Ix\oplus\Iy)
\]
form an $(\alpha,\beta)$-Cuntz pair that is homotopic to a degenerate one.
Hence we observe the following equality of homotopy classes:
\[
\begin{array}{ccl}
\big[ (\psi,\Iv), (\kappa,\Ix) \big] &=& \big[ (\psi\oplus\theta,\Iv\oplus\Iy), (\kappa\oplus\theta,\Ix\oplus\Iy) \big] \\
&=& \big[ \ad(w_0^*)\circ(\psi\oplus\theta,\Iv\oplus\Iy), \ad(w_0^*)\circ(\kappa\oplus\theta,\Ix\oplus\Iy) \big] \\
&=& \big[ \ad(w_0^*)\circ(\psi\oplus\theta,\Iv\oplus\Iy), \ad(w_0^*)\circ(\kappa\oplus\theta,\Ix\oplus\Iy) \big] \\
&&+ \big[ \ad(w_0^*)\circ(\kappa\oplus\theta,\Ix\oplus\Iy), (\theta,\Iy) \big] \\
&=& \big[ \ad(w_0^*)\circ(\psi\oplus\theta,\Iv\oplus\Iy), (\theta,\Iy) \big].
\end{array}
\]
Therefore the cocycle representation $(\phi,\Iu)= \ad(w_0^*)\circ(\psi\oplus\theta,\Iv\oplus\Iy)$ has the desired property.
Since $(\theta,\Iy)$ was absorbing, clearly so is $(\phi,\Iu)$ by construction.
This finishes the proof.
\end{proof}

%%%%%%%%%%%%%%%%%%%%%%%%%%%%%%%%%

\section{Criteria for asymptotic unitary equivalence}

In this section we provide a general argument showing that if one compares two cocycle representations forming a Cuntz pair, then strong asymptotic unitary equivalence (see below) is implied by an a priori weaker notion of equivalence, which is in turn implied by operator homotopy.
This will provide the final technical ingredient towards our main result in the next section, and replaces all the arguments related to derivations \cite[Subsection 2.3]{DadarlatEilers01} appearing in the prior proof of the stable uniqueness theorem.

\begin{defi} \label{def:properasym}
Let $(\phi,\Iu), (\psi,\Iv): (A,\alpha) \to (\CM(B),\beta)$ be two cocycle representations.
We say that $(\phi,\Iu)$ and $(\psi,\Iv)$ are \emph{properly asymptotically unitarily equivalent},
if there exists a norm-continuous path $u: [0,\infty)\to\CU(\eins+B)$ such that
\[
\lim_{t\to\infty} \|\psi(a)-u_t\phi(a)u_t^*\|=0
\]
for all $a\in A$ and
\[
\lim_{t\to\infty} \max_{g\in K}\ \| \Iv_g - u_t\Iu_g\beta_g(u_t)^* \| =0
\] 
for all compact sets $K\subseteq G$.
If one may arrange $u_0=\eins$, then we call $(\phi,\Iu)$ and $(\psi,\Iv)$ \emph{strongly asymptotically unitarily equivalent}.
\end{defi}

\begin{lemma} \label{lem:approximate-multipliers-1}
Let $B$ be a $\sigma$-unital \cstar-algebra with an action $\beta: G\curvearrowright B$.
Let $D\subseteq\CM(B)$ be a separable \cstar-subalgebra.
Let $u: [0,\infty)\to\CU(\CM^\beta(B))$ be a norm-continuous path of unitaries with $u_0=\eins$ such that $u|_{[1,\infty)}$ is constant and $[u_t, D]\subseteq B$ for all $t\in [0,1]$.
Suppose that $\max_{0\leq t\leq 1} \|u_t-\eins\|<2$. 
Then for all sequences of $\eps_n>0$, compact subsets $\CF_n\subset B$, $\CG_n\subset D+B$ and $K_n\subseteq G$, where $n\geq 0$, there exists a norm-continuous path of unitaries $v: [0,\infty)\to\CU(\eins+B)$ with $v_0=\eins$ such that
\[
\max_{n\leq t\leq n+1} \Big( \max_{b\in\CF_n} \|(v_t^*u_t-\eins)b\| + \max_{d\in\CG_n} \|[ v_t^*u_t, d]\| + \max_{g\in K_n} \|v_t^*u_t-\beta_g(v_t^*u_t)\| \Big) \leq \eps_n
\]
for all $n\geq 0$.
\end{lemma}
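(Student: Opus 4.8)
The plan is to replace the multiplier unitary $u_t$ by a genuine unitary in $\eins+B$ built from a \emph{cut down logarithm}, and to push the cut further and further out as $t\to\infty$. Since $\max_{0\le t\le 1}\|u_t-\eins\|<2$ and $u$ is constant on $[1,\infty)$, there is $c<\pi$ with $\sp(u_t)\subseteq\{e^{i\vartheta}:|\vartheta|\le c\}$ for all $t\ge 0$, so $a_t:=-i\log u_t$ is a well-defined self-adjoint element of $\CM^\beta(B)$ with $\|a_t\|\le c$ and $\exp(ia_t)=u_t$. The path $t\mapsto a_t$ is norm-continuous, $a_0=0$, and it is constant on $[1,\infty)$, so $\{a_t:t\ge 0\}$ is separable. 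Approximating $\log$ uniformly by polynomials in $z,\bar z$ on the arc and using that $B$ is an ideal of $\CM(B)$, the hypothesis $[u_t,D]\subseteq B$ yields $[a_t,D]\subseteq B$, hence $[a_t,D+B]\subseteq B$; and $a_t\in\CM^\beta(B)$ says precisely that $a_t-\beta_g(a_t)\in B$ for all $g$. Since $\beta$ restricts to a point-norm continuous, hence jointly continuous, action on $\CM^\beta(B)$, the maps $(t,d)\mapsto[a_t,d]\in B$ and $(t,g)\mapsto a_t-\beta_g(a_t)\in B$ are jointly continuous.

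Next, apply \autoref{lem:Kasparov} to the separable \cstar-subalgebra $D_0:=\cstar\big(D\cup\{a_t:t\ge 0\}\cup\bigcup_n\CF_n\cup\bigcup_n\CG_n\big)\subseteq\CM(B)$ to obtain an increasing approximate unit $(h_k)_{k\ge 1}$ of positive contractions in $B$ with $\lim_k\|[h_k,d]\|=0$ for $d\in D_0$ and $\lim_k\max_{g\in K}\|h_k-\beta_g(h_k)\|=0$ for compact $K\subseteq G$. For a positive contraction $h\in B$ set $v_t^h:=\exp(ia_t)\exp\!\big(-i(\eins-h)a_t(\eins-h)\big)$. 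Modulo $B$ the exponent $(\eins-h)a_t(\eins-h)$ equals $a_t$, so $v_t^h\equiv u_tu_t^*=\eins$, whence $v_t^h\in\eins+B$; being a product of unitaries it is unitary, so $v_t^h\in\CU(\eins+B)$ and $v_0^h=\eins$. A one-line computation gives the clean identity $(v_t^h)^*u_t=\exp\!\big(i(\eins-h)a_t(\eins-h)\big)$. Writing $w=(\eins-h)a_t(\eins-h)$ (self-adjoint, $\|w\|\le c$), elementary power-series estimates — together with $\|[f(x),y]\|\le e^{\|x\|}\|[x,y]\|$ for $f(z)=e^{iz}$ and $\|\exp(iX)-\exp(iY)\|\le\|X-Y\|$ for self-adjoint $X,Y$ — give:
\begin{itemize}
\item $\big\|(\exp(iw)-\eins)b\big\|\le(e^{c}-1)\,\|(\eins-h)b\|$ for $b\in B$;
\item $\big\|[\exp(iw),d]\big\|\le e^{c}\big(2c\,\|[h,d]\|+\|(\eins-h)[a_t,d]\|\big)$ for $d\in D+B$;
\item $\big\|\exp(iw)-\beta_g(\exp(iw))\big\|\le\|w-\beta_g(w)\|\le 3c\,\|h-\beta_g(h)\|+\|(\eins-h)(a_t-\beta_g(a_t))\|$,
\end{itemize}
the last line using $\|a_t\|=\|\beta_g(a_t)\|\le c$ to absorb the $h-\beta_g(h)$ errors. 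The key point is that in the second and third lines the potentially large elements $[a_t,d]$ and $a_t-\beta_g(a_t)$ lie in $B$; as $t$ ranges over $[0,\infty)$, $d$ over the compact $\CG_n$, and $g$ over the compact $K_n$, joint continuity makes $\{[a_t,d]:t,\,d\in\CG_n\}$ and $\{a_t-\beta_g(a_t):t,\,g\in K_n\}$ compact subsets of $B$, so left multiplication by $\eins-h_k$ shrinks them uniformly as $k\to\infty$.

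Finally the bookkeeping: using these convergences choose a strictly increasing sequence $k(0)<k(1)<\cdots$ in $\IN$ such that for every $k\ge k(n)$ each of the three displayed bounds with $h=h_k$ is at most $\eps_n/3$, uniformly over $t\ge 0$, $b\in\CF_n$, $d\in\CG_n$, $g\in K_n$. For $t=n+\lambda$ with $n\ge 0$ and $\lambda\in[0,1]$ put $h_t:=(1-\lambda)h_{k(n)}+\lambda h_{k(n+1)}$, a norm-continuous path of positive contractions in $B$ with $h_n=h_{k(n)}$, and set $v_t:=v_t^{h_t}$. Then $v\colon[0,\infty)\to\CU(\eins+B)$ is a well-defined norm-continuous path, and on $[n,n+1]$ convexity bounds $\|(\eins-h_t)c\|$, $\|[h_t,d]\|$, $\|h_t-\beta_g(h_t)\|$, $\|(\eins-h_t)[a_t,d]\|$ and $\|(\eins-h_t)(a_t-\beta_g(a_t))\|$ by the maximum of their values at $h_{k(n)}$ and $h_{k(n+1)}$, both past the threshold $k(n)$; plugging into the three bounds via $(v_t)^*u_t=\exp(i(\eins-h_t)a_t(\eins-h_t))$ yields the required estimate on $[n,n+1]$ for all $n\ge 0$. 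The step I expect to be the main obstacle is the $\beta$-equivariance estimate: because $u_t$ only lies in $\CM^\beta(B)$ rather than $\CM(B)^\beta$, the difference $a_t-\beta_g(a_t)$ is merely an element of $B$ and not norm-small, and the entire construction — conjugating the logarithm by $\eins-h$ and pushing $h$ out along a Kasparov-type approximate unit — is engineered precisely so that this $B$-valued, and by point-norm continuity of $\beta$ on $\CM^\beta(B)$ norm-compact, obstruction is annihilated.
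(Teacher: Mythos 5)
Your proof is correct and uses the same essential mechanism as the paper's: pass to the self-adjoint logarithm $a_t = -i\log u_t\in\CM^\beta(B)$, invoke Kasparov's \autoref{lem:Kasparov} to produce a quasicentral and approximately $\beta$-invariant approximate unit $(h_k)$, cut off $a_t$ along this approximate unit, exponentiate, and interpolate linearly in $h$ to get a norm-continuous, slowly converging path. The one genuine difference is the form of the cut-off and of $v_t$. The paper sets $v_t=\exp(ih_ta_th_t)$, which visibly lies in $\CU(\eins+B)$ since $h_ta_th_t\in B$, but then $v_t^*u_t=\exp(-ih_ta_th_t)\exp(ia_t)$ is only approximately the single exponential $\exp(i(a_t-h_ta_th_t))$; controlling this discrepancy is a Baker--Campbell--Hausdorff type estimate that requires $\|[a_t,h_ta_th_t]\|$ to be small, which is why the paper adds $\{a_t\}$ to the Kasparov quasicentrality hypotheses and lists a corresponding elementary estimate for products of exponentials. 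You instead set $v_t=\exp(ia_t)\exp\bigl(-i(\eins-h_t)a_t(\eins-h_t)\bigr)$, a product of multiplier unitaries whose difference from $\eins$ lies in $B$ because $(\eins-h_t)a_t(\eins-h_t)\equiv a_t\ (\mathrm{mod}\ B)$; this gives the exact identity $v_t^*u_t=\exp\bigl(i(\eins-h_t)a_t(\eins-h_t)\bigr)$ and dispenses with the BCH step. Both routes then rest on the same structural facts: the obstructions $[a_t,d]$ and $a_t-\beta_g(a_t)$ live in $B$, form norm-compact families as $(t,d)$ ranges over $[0,\infty)\times\CG_n$ and $(t,g)$ over $[0,\infty)\times K_n$ (by the constancy of $a$ on $[1,\infty)$ and the point-norm continuity of $\beta$ on $\CM^\beta(B)$), and are therefore annihilated uniformly by multiplication with $\eins-h_k$. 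Your variant is marginally tidier algebraically but buys nothing substantial, since both cut-offs draw on the full strength of Kasparov's lemma and the final bookkeeping via convex interpolation of the approximate unit is identical.
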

\begin{proof}
Without loss of generality, we may assume that both $\CF_n$ and $\CG_n$ consist of  contractions.
Note that the set of all elements $x\in\CM^\beta(B)$ with $[x,D]\subseteq B$ forms a \cstar-algebra.
Because the distance from the path $u$ to the unit is uniformly bounded away from 2,  we can apply functional calculus and find a norm-continuous path of self-adjoints $a: [0,1]\to \CM^\beta(B)$ with $a_0=0$ and
\[
\max_{0\leq t\leq 1} \|a_t\|<\pi ,\quad [a_t,D]\subseteq B ,\quad \text{and}\quad u_t=\exp(ia_t),\ t\in [0,1].
\]
For convenience, let us define $a_t=a_1$ for $t\geq 1$.

By approximating the exponential function via its partial power series, it is completely standard to show the following fact.
For every $\eps>0$, there exists $\delta>0$ such that the following properties hold for any pair of elements $c,d\in A$ in any Banach algebra $A$:
\begin{itemize}[leftmargin=*]
\item If $\|c\|\leq 2\pi$ and $\|d\|\leq\delta$, then $\|\exp(c+d)-\exp(c)\|\leq\eps$.
\item If $\|c\|,\|d\|\leq\pi$, then $\|[c,d]\| \leq \delta$ implies $\|\exp(c)\exp(d)-\exp(c+d)\|\leq\eps$.
\item If $\|c\|\leq 1$ and $\|d\|\leq 2\pi$, then $\|[c,d]\| \leq \delta$ implies $\|[c,\exp(d)]\|\leq\eps$.
\item If $\|c\|\leq 1$ and $\|d\| \leq 2\pi$, then $\|dc\|\leq\delta$ implies $\|(\exp(d)-\eins)c\|\leq\eps$.
\end{itemize}
For every $n\geq 0$, we apply this fact to choose a constant $\delta=\delta_n$ with $\eps_n/8$ in place of $\eps$.
By \autoref{lem:Kasparov}, it is possible to find an increasing approximate unit $h_n\in B$ with $\max_{g\in K} \|h_n-\beta_g(h_n)\|\to 0$ for every compact set $K\subseteq G$, and moreover satisfying the quasicentrality condition
\[
\lim_{n\to\infty} \max_{0\leq t\leq 1} \| [a_t, h_n] \| = 0 = \lim_{n\to\infty}  \| [d, h_n] \|
\]
for all $d\in D$.
By linear interpolation, we may extend this sequence to an increasing norm-continuous map of positive contractions $h: [0,\infty)\to B$ with the same asymptotic properties.
In particular we observe
\[
\begin{array}{cl}
\multicolumn{2}{l}{ \dst \lim_{s\to\infty} \max_{g\in K} \|(a_t- h_s a_t h_s)-\beta_g(a_t- h_s a_t h_s)\| } \\
=& \dst \lim_{s\to\infty} \max_{g\in K} \| (\eins-h_s^2)\underbrace{(a_t-\beta_g(a_t))}_{\in B}\| \ = \ 0
\end{array}
\]
for every compact set $K\subseteq G$, and uniformly over all $t\in [0,1]$.
Similarly one has for all $d\in D+B$ that
\[
\begin{array}{cl}
\multicolumn{2}{l}{ \dst \lim_{s\to\infty} \max_{g\in K} \| \big[ (a_t- h_s a_t h_s), d \big] \| } \\
=& \dst \lim_{s\to\infty} \max_{g\in K} \| (\eins-h_s^2)\underbrace{[a_t, d]}_{\in B}\| \ = \ 0 
\end{array}
\]
for every compact set $K\subseteq G$ and uniformly over all $t\in [0,1]$.

By reparameterizing $h$ and/or cutting away an initial segment of the interval, if necessary, we may ensure that the following estimates are true for every $n\geq 0$:
\begin{equation} \label{eq:approx-mult-1:1}
\sup_{n\leq t\leq n+1} \max_{b\in\CF_n} \|(a_t-h_t a_t h_t)b\|\leq\delta_n;
\end{equation} 
\begin{equation} \label{eq:approx-mult-1:2}
\sup_{n\leq t\leq n+1}  \|[a_t, h_ta_th_t]\|\leq\delta_n;
\end{equation}
\begin{equation} \label{eq:approx-mult-1:3}
\sup_{n\leq t\leq n+1} \ \max_{g\in K_n} \|(a_t- h_t a_t h_t)-\beta_g(a_t- h_t a_t h_t)\| \leq \delta_n;
\end{equation}
\begin{equation} \label{eq:approx-mult-1:4}
\sup_{n\leq t\leq n+1} \ \max_{d\in \CG_n} \| \big[ (a_t- h_t a_t h_t), d \big] \| \leq \delta_n.
\end{equation}
We define $v_t=\exp(ih_ta_th_t)\in\CU(\eins+B)$ and claim that this satisfies the required properties.
Since $a_0=0$, we have $v_0=\eins$.
Due to the choice of the constant $\delta_n$, we can compute for all $n\geq 0$, $t\in [n,n+1]$ and $b\in\CF_n$ that
\[
\begin{array}{ccl}
v_t^*u_tb & \hspace{4.5mm} = \hspace{4.5mm} & \exp(-ih_ta_th_t)\exp(ia_t)b \\ 
&\stackrel{\eqref{eq:approx-mult-1:2}}{=}_{\makebox[0pt]{\footnotesize\hspace{2mm}$\eps_n/8$}}& \exp(i(a_t-h_ta_th_t))b \ \stackrel{\eqref{eq:approx-mult-1:1}}{=}_{\makebox[0pt]{\footnotesize\hspace{2mm}$\eps_n/8$}} \hspace{5mm} b.
\end{array}
\]
Furthermore we compute for all $n\geq 0$, $t\in [n,n+1]$ and $g\in K_n$ that
\[
\begin{array}{cclcl}
\beta_g(v_t^*u_t) & \stackrel{\eqref{eq:approx-mult-1:2}}{=}_{\makebox[0pt]{\footnotesize\hspace{2mm}$\eps_n/8$}}\hspace{2.5mm} & \beta_g(\exp(i(a_t-h_ta_th_t)))  
& \hspace{4mm}= \hspace{4mm} & \exp(i\beta_g(a_t-h_ta_th_t)) \\
&\stackrel{\eqref{eq:approx-mult-1:3}}{=}_{\makebox[0pt]{\footnotesize\hspace{2mm}$\eps_n/8$}}\hspace{2.5mm}&
\exp(i(a_t-h_ta_th_t)) & \stackrel{\eqref{eq:approx-mult-1:2}}{=}_{\makebox[0pt]{\footnotesize\hspace{2mm}$\eps_n/8$}}\vspace{4mm} & v_t^*u_t.
\end{array}
\]
Lastly we compute for all $n\geq 0$, $t\in [n,n+1]$ and $d\in\CG_n$ that
\[
[v_t^*u_t, d] \stackrel{\eqref{eq:approx-mult-1:2}}{=}_{\makebox[0pt]{\footnotesize\hspace{3.5mm}$2\eps_n/8$}} \hspace{6mm}  \big[ \exp(i(a_t-h_ta_th_t)), d \big]  \ \stackrel{\eqref{eq:approx-mult-1:4}}{=}_{\makebox[0pt]{\footnotesize\hspace{2mm}$\eps_n/8$}} \hspace{5mm} 0.
\]
The claim follows via the triangle inequality.
\end{proof}

\begin{lemma} \label{lem:approximate-multipliers}
Let $B$ be a $\sigma$-unital \cstar-algebra with an action $\beta: G\curvearrowright B$.
Let $D\subseteq\CM(B)$ be a separable \cstar-subalgebra.
Let $U: [0,\infty)\to\CU(\CM^\beta(B))$ be a norm-continuous path of unitaries with $U_0=\eins$ and $[U_t,D]\subseteq B$ for all $t\geq 0$.
Then there exists a norm-continuous path of unitaries $v: [0,\infty)\to\CU(\eins+B)$ with $v_0=\eins$ such that
\[
\lim_{t\to\infty} \|(v^*_tU_t-\eins)b\| = 0 = \lim_{t\to\infty} \| \big[ v_t^*U_t, d \big] \|
\]
for all $b\in B$ and $d\in D$, and
\[
\lim_{t\to\infty}\max_{g\in K} \|v_t^*U_t-\beta_g(v_t^*U_t)\|=0
\]
for every compact set $K\subseteq G$.
\end{lemma}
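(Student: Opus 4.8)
The goal of Lemma~\ref{lem:approximate-multipliers} is to upgrade Lemma~\ref{lem:approximate-multipliers-1} from a local statement (valid only when the path $U$ stays close to $\eins$ and is eventually constant) to a genuinely asymptotic statement for an arbitrary norm-continuous path. The plan is to chop $[0,\infty)$ into unit intervals $[n,n+1]$ and treat the path piecewise, but the naive ``restrict, apply, reglue'' strategy fails because $U$ is not close to the unit on a long interval. So the first step is a standard reparametrization/subdivision trick: by uniform continuity of $U$ on each compact interval, refine the partition so that on each small subinterval $U_{s}U_{r}^{*}$ stays within distance, say, $1$ of $\eins$; equivalently, consider the ``increment paths'' $t\mapsto U_{t}U_{t_{k}}^{*}$ on $[t_{k},t_{k+1}]$, which start at $\eins$, have increments bounded away from $2$, and satisfy $[U_{t}U_{t_{k}}^{*},D]\subseteq B$ since $\CM^{\beta}(B)\cap\{x:[x,D]\subseteq B\}$ is a \cstar-algebra containing all the $U_{t}$.

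Next I would apply Lemma~\ref{lem:approximate-multipliers-1} to each increment path — after extending it to be constant beyond its subinterval and reindexing time so it lives on $[0,1]$ — against an exhausting sequence of data $(\eps_{n},\CF_{n},\CG_{n},K_{n})$ with $\eps_{n}\to 0$, $\CF_{n}\uparrow$ dense in $B$, $\CG_{n}\uparrow$ dense in $D$, $K_{n}\uparrow G$. This produces, on each subinterval, a unitary path $w^{(k)}$ in $\CU(\eins+B)$ starting at $\eins$ with $(w^{(k)}_{t})^{*}U_{t}U_{t_{k}}^{*}$ approximately acting like a unit on $\CF_{n}$, approximately commuting with $\CG_{n}$, and approximately $\beta$-invariant on $K_{n}$, all within $\eps_{n}$. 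The candidate global path is then the ``telescoping product'': define $v_{t}$ on $[t_{k},t_{k+1}]$ by multiplying the previously accumulated correction by $w^{(k)}_{t}$ — more precisely, set $v_{t}=w^{(k)}_{t}\cdot c_{k}$ where $c_{k}\in\CU(\eins+B)$ is chosen recursively so that the pieces match continuously at the partition points, with $v_{0}=\eins$. One must check $v$ stays in $\CU(\eins+B)$ (clear, as a product of such unitaries) and is norm-continuous (clear by construction, matching endpoints).

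The main obstacle — and the step requiring genuine care — is verifying the three asymptotic conclusions for $v^{*}_{t}U_{t}$ rather than for the increments. Writing $v^{*}_{t}U_{t}=c_{k}^{*}(w^{(k)}_{t})^{*}U_{t}U_{t_{k}}^{*}\cdot U_{t_{k}}$, the factor $(w^{(k)}_{t})^{*}U_{t}U_{t_{k}}^{*}$ is controlled by the lemma, but we are left with conjugating/multiplying by $U_{t_{k}}$ and $c_{k}$, which are general unitaries in $\CM^{\beta}(B)$ (with $[\cdot,D]\subseteq B$). The key point is that $U_{t_{k}}$ normalizes $B$ and normalizes $D$ modulo $B$, so for $b\in B$ the expression $(v^{*}_{t}U_{t}-\eins)b = c_{k}^{*}\big((w^{(k)}_{t})^{*}U_{t}U_{t_{k}}^{*}-\eins\big)U_{t_{k}}b + (c_{k}^{*}U_{t_{k}}-\eins)b$; the first term is small provided $U_{t_{k}}b$ lies (approximately) in the relevant $\CF_{n}$, which forces the $\CF_{n}$ to be chosen \emph{after} knowing the $c_{k}$ and $U_{t_{k}}$ — i.e.\ the recursion selecting $c_{k}$, the partition points $t_{k}$, and the test sets must be interleaved. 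I would therefore carry out a single simultaneous recursion: having fixed $t_{0}<\dots<t_{k}$, $c_{0},\dots,c_{k}$, enlarge $\CF_{n},\CG_{n},K_{n}$ to also absorb $U_{t_{k}}^{*}(\text{previous data})$, $c_{k}^{*}(\text{previous data})$ and an exhausting list of $B$, $D$, $G$, then invoke Lemma~\ref{lem:approximate-multipliers-1} on the next increment and pick $t_{k+1},c_{k+1}$. A standard $\eps/2$-argument using density of $\bigcup\CF_{n}$ in $B$, of $\bigcup\CG_{n}$ in $D$, and $\bigcup K_{n}=G$ together with $\|v_{t}^{*}U_{t}\|=1$ then yields $\|(v_{t}^{*}U_{t}-\eins)b\|\to0$, $\|[v_{t}^{*}U_{t},d]\|\to0$, and $\max_{g\in K}\|v_{t}^{*}U_{t}-\beta_{g}(v_{t}^{*}U_{t})\|\to0$, completing the proof.
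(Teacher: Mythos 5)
Your overall strategy matches the paper's: partition $[0,\infty)$ so that each increment $U_tU_{t_k}^*$ stays close to $\eins$, apply Lemma~\ref{lem:approximate-multipliers-1} to each increment, and glue. However, there is a genuine gap in your gluing step and the resulting error analysis. By setting $v_t = w_t^{(k)}c_k$ on $[t_k,t_{k+1}]$ with $c_k=v_{t_k}$, you \emph{freeze} all earlier corrections into the constant $c_k$ once their subinterval is over. Your own telescoping identity
\[
(v_t^*U_t-\eins)b \;=\; c_k^*\bigl[(w_t^{(k)})^*U_tU_{t_k}^*-\eins\bigr]U_{t_k}b \;+\; (v_{t_k}^*U_{t_k}-\eins)b
\]
then shows that the endpoint error $a_k:=\|(v_{t_k}^*U_{t_k}-\eins)b\|$ satisfies $a_{k+1}\leq a_k+\eps_k$ \emph{and} $a_{k+1}\geq a_k-\eps_k$, where $\eps_k$ bounds the increment error. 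Since $a_0=0$, this forces $a_k\leq\sum_{j<k}\eps_j$, a nonvanishing constant; moreover no choice of the $\eps_k$ can make $a_k\to0$, because $a_k$ can only change by $\eps_k$ per step. The ``standard $\eps/2$-argument'' cannot repair this, since it presupposes that the estimates vanish on a fixed dense set, which is exactly what fails.

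The paper avoids this by \emph{not} freezing earlier corrections. The path produced by Lemma~\ref{lem:approximate-multipliers-1} varies for all $t$ (the quasicentral approximate unit $h_t$ keeps improving, so $v_t^*u_t\to\eins$ strictly even after $u_t$ has become constant), and this extra information must be retained. Concretely, the paper defines $v^{(n)}$ on \emph{all} of $[0,\infty)$ with $v^{(n)}|_{[0,n]}=\eins$, and the glued path is $v_t = v_t^{(n)}v_t^{(n-1)}\cdots v_t^{(0)}$ for $t\in[n,n+1]$ — every factor depends on the current $t$, not just the newest. The test sets $\CF_n^k$, $\CG_n^k$ are fixed in advance purely in terms of products of the $U^{(j)}$'s (and $h$, $\CG_k$, $K_k$), so the $v^{(n)}$'s are chosen independently, not recursively; and Lemma~\ref{lem:approximate-multipliers-1} is invoked for the $n$-th increment with tolerance $2^{-(n+k)}$ on $[k,k+1]$, so the $n$-th correction's error on $[k,k+1]$ decays \emph{with} $k$. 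The peel-off estimate then gives total error $\sum_{n=0}^{k}2^{-(n+k)}\leq 2^{1-k}\to0$. The same time-improving mechanism is what drives the inductive $\beta$-invariance estimate (\eqref{eq:approx-mult:4} in the paper), which your sketch leaves to a routine argument that does not go through for the frozen gluing.
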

\begin{proof}
Since $B$ is $\sigma$-unital, it has a strictly positive element $h\in B$, which we fix for the rest of the proof. 
The first of the above limit properties holds if it holds for $h$ in place of $b$.
Using that $D$ is separable, we choose a compact subset $\CG\subset D$ of contractions whose linear span is dense in $D$.
The second of the above limit properties holds if it holds for $d\in\CG$.

By using the fact that $U$ is uniformly continuous on bounded intervals, we are able to find an increasing sequence $0=t_0<t_1<t_2<\dots$ of real numbers with $t_n\to\infty$ such that
\[
\max_{t_n\leq s\leq t_{n+1}} \|U_s-U_{t_n}\|<2,\quad n\geq 0.
\]
After reparametrizing the path, if necessary, we may assume $t_n=n$ for all $n\geq 1$.
Let us define for every $n\geq 0$ a norm-continuous path of unitaries $U^{(n)}: [0,\infty)\to\CU(\CM^\beta(B))$ via
\[
U^{(n)}_t = \begin{cases} U_{n+1} U_{n}^* &,\quad t> n+1 \\
U_t U_{n}^* &,\quad n \leq t \leq n+1 \\
\eins &,\quad t < n.
 \end{cases}
\]
By construction we thus have for all $n\geq 0$ and $t \leq n+1$ the equality
\[
U_t = U^{(n)}_t U^{(n-1)}_t \cdots U^{(0)}_t.
\]
Let $K_n\subseteq G$ be an increasing sequence of compact sets with $G=\bigcup_{n\geq 0} \operatorname{int}(K_n)$.
We consider the compact sets $\CF_{n}^k\subset B$ for natural numbers $n\leq k$ via
\[
\begin{array}{cl}
\CF_n^k = & \set{ U^{(n-1)}_t \cdots U^{(0)}_t h \mid t\leq k } \\
& \cup \set{ U^{(n-1)}_t\cdots U^{(j+1)}_t ( U^{(j)}_t\beta_g(U^{(j)}_t)^*-\eins) \mid 0\leq j<n,\ g\in K_k,\ t\leq k } \\
& \cup \set{ U^{(n-1)}_t\cdots U^{(j+1)}_t ( \beta_g(U^{(j)}_t)U^{(j)*}_t-\eins) \mid 0\leq j<n,\ g\in K_k,\ t\leq k }.
\end{array}
\]
Here we implicitly follow the convention that the product $U^{(n-1)}_t\cdots U^{(j+1)}_t$ is understood as the unit when $j=n-1$, as the upper indices are supposed to descend from left to right and we end up with the empty product.
Note that since the unitary path $U$ takes values in $\CM^\beta(B)$, the elements $U^{(j)}_t\beta_g(U^{(j)}_t)^*$ and $\beta_g(U^{(j)}_t)U^{(j)*}_t$ appearing in this set belong to $\eins+B$, hence indeed $\CF_n^k\subset B$.
We also consider the compact subsets $\CG_n^k\subset D+B$ for $n\leq k$ via
\[
\CG_n^k = \set{ U^{(n-1)}_t \cdots U^{(0)}_t d U^{(0)*}_t \cdots U^{(n-1)*}_t \mid d\in\CG,\ t\leq k+1 }.
\]
We apply \autoref{lem:approximate-multipliers-1} for every $n\geq 0$ and choose a unitary path $v^{(n)}: [0,\infty)\to\CU(\eins+B)$ such that $v^{(n)}|_{[0,n]}=\eins$ and for every natural number $k\geq n$, we have (note that for $k<n$, the norms appearing here are zero)
\begin{equation} \label{eq:approx-mult:1}
\max_{k\leq t\leq k+1} \max_{b\in \CF_n^k} \| (v^{(n)*}_t U^{(n)}_t-\eins) b \| \leq 2^{-(n+k)};
\end{equation}
\begin{equation} \label{eq:approx-mult:2}
\max_{k\leq t\leq k+1} \ \max_{g\in K_k} \| (v^{(n)*}_t U^{(n)}_t)-\beta_g(v^{(n)*}_t U^{(n)}_t) \| \leq 2^{-(n+k)};
\end{equation}
\begin{equation} \label{eq:approx-mult:3}
\max_{k\leq t\leq k+1} \ \max_{d\in \CG_n^k} \| \big[ v^{(n)*}_t U^{(n)}_t , d  \big] \| \leq 2^{-(n+k)};
\end{equation}
Finally, we define $v: [0,\infty)\to\CU(\eins+B)$ via
\[
v_t = v^{(n)}_t v^{(n-1)}_t \cdots v^{(0)}_t,\quad n\leq t\leq n+1.
\]
We claim that this map satisfies the desired properties.

We first estimate for all $t\in [k,k+1]$ that
\[
\begin{array}{cl}
\multicolumn{2}{l}{ \| (v_t^*U_t-\eins) h\| } \\
=& \| (v^{(0)*}_t v^{(1)*}_t \cdots v^{(k)*}_t U^{(k)}_t U^{(k-1)}_t \cdots U^{(0)}_t-\eins)h\| \\
\stackrel{\eqref{eq:approx-mult:1}}{\leq}& 2^{-2k}+\| (v^{(0)*}_t v^{(1)*}_t \cdots v^{(k-1)*}_t U^{(k-1)}_t U^{(k-2)}_t \cdots U^{(0)}_t-\eins)h\| \\
\stackrel{\eqref{eq:approx-mult:1}}{\leq}& 2^{-2k}+2^{-(2k-1)}+\| (v^{(0)*}_t v^{(1)*}_t \cdots v^{(k-2)*}_t U^{(k-2)}_t U^{(k-3)}_t \cdots U^{(0)}_t-\eins)h\| \\
\leq& \dots \ \leq \ \dst \sum_{\ell=k}^{2k} 2^{-\ell} \ \leq \ 2^{1-k}.
\end{array}
\]
Similarly we have for all $t\in [k,k+1]$ and $d\in\CG$ that
\[
\begin{array}{cl}
\multicolumn{2}{l}{ \| \big[ v_t^*U_t, d \big] \| } \\
=& \| d - v_t^*U_t d U_t^* v_t \| \\
=& \| d - v^{(0)*}_t  \cdots v^{(k)*}_t U^{(k)}_t \underbrace{ \cdots U^{(0)}_t d U^{(0)*}_t \cdots  }_{\in\CG_k^k} U^{(k)*}_t v^{(k)}_t\cdots v^{(0)}_t \| \\
\stackrel{\eqref{eq:approx-mult:3}}{\leq}& 2^{-2k} \\
&+ \| d - v^{(0)*}_t  \cdots v^{(k-1)*}_t U^{(k-1)}_t \underbrace{ \cdots U^{(0)}_t d U^{(0)*}_t  \cdots }_{\in\CG_{k-1}^{k}} U^{(k-1)*}_t v^{(k-1)}_t\cdots v^{(0)}_t \| \\
\stackrel{\eqref{eq:approx-mult:3}}{\leq}& 2^{-2k}+2^{-(2k-1)} \\
& + \| d - v^{(0)*}_t  \cdots v^{(k-2)*}_t U^{(k-2)}_t \cdots U^{(0)}_t d U^{(0)*}_t \cdots U^{(k-2)*}_t v^{(k-2)}_t\cdots v^{(0)}_t \| \\
\leq& \dots  \ \leq \ \dst \sum_{\ell=k}^{2k} 2^{-\ell} \ \leq \ 2^{1-k}.
\end{array}
\]
Now we want to prove for every $g\in K_k$ the inequality 
\[
\|v_t^*U_t-\beta_g(v_t^*U_t)\|\leq 3\cdot 2^{1-k},\quad t\in [k,k+1].
\]
This would clearly finish the proof.
For convenience, let us set 
\[
X_t^{(j,k)} = v^{(k-j)*}_t  \cdots v^{(k)*}_t U^{(k)}_t  \cdots U^{(k-j)}_t \quad\text{for } 0\leq j\leq k \text{ and } t\in [0,\infty).
\]
Note that $X^{(k,k)}_t=v_t^*U_t$ when $t\in [k,k+1]$.
We will show inductively that for all natural numbers $j\leq k$, all $t\in [k,k+1]$ and $g\in K_k$ we have the inequality
\begin{equation} \label{eq:approx-mult:4}
\| X_t^{(j,k)} - \beta_g(X_t^{(j,k)}) \| \leq 3 \sum_{n=2k-j}^{2k} 2^{-n}.
\end{equation}
The case $j=k$ then evidently yields the desired inequality and would complete the proof.
For all the computations below, let us fix some $k\geq0$, $g\in K_k$ and $t\in [k,k+1]$.
We first observe that when $j<k$, we have 
\[
\begin{array}{cl}
\multicolumn{2}{l}{ \| (X^{(j,k)}_t-\eins) \big( U^{(k-j-1)}_t \beta_g(U^{(k-j-1)}_t)^*-\eins\big) \| } \\
=& \| (v^{(k-j)*}_t \cdots v^{(k)*}_t U^{(k)}_t \cdots U^{(k-j)}_t-\eins) \big( U^{(k-j-1)}_t \beta_g(U^{(k-j-1)}_t)^*-\eins\big) \| \\
\stackrel{\eqref{eq:approx-mult:1}}{\leq}& 2^{-2k} \\
&+ \| (v^{(k-j)*}_t \cdots v^{(k-1)*}_t U^{(k-1)}_t \cdots U^{(k-j)}_t-\eins) \big( U^{(k-j-1)}_t \beta_g(U^{(k-j-1)}_t)^*-\eins\big) \| \\
\stackrel{\eqref{eq:approx-mult:1}}{\leq}& 2^{-2k} + 2^{-(2k-1)} \\
& + \| (v^{(k-j)*}_t \cdots v^{(k-2)*}_t U^{(k-2)}_t \cdots U^{(k-j)}_t-\eins) \big( U^{(k-j-1)}_t \beta_g(U^{(k-j-1)}_t)^*-\eins\big) \| \\
\leq& \dots \\
\leq& \dst \sum_{n=2k-j}^{2k} 2^{-n} \ \leq \ 2^{-(2k-j-1)}.
\end{array}
\]
In a completely analogous fashion we may also estimate
\[
\begin{array}{cl}
\multicolumn{2}{l}{ \| \big( U^{(k-j-1)}_t \beta_g(U^{(k-j-1)}_t)^*-\eins\big) (X^{(j,k)}_t-\eins) \| }\\
= & \| (X^{(j,k)*}_t-\eins) \big(  \beta_g(U^{(k-j-1)}_t)U^{(k-j-1)*}_t-\eins\big) \| \\
= & \| (\eins-X^{(j,k)}_t) \big(  \beta_g(U^{(k-j-1)}_t)U^{(k-j-1)*}_t-\eins\big) \| \ \leq \ 2^{-(2k-j-1)}. 
\end{array}
\]
These two estimates lead to the inequality
\[
\begin{array}{cl}
\multicolumn{2}{l}{ \| X_t^{(j,k)} U^{(k-j-1)}_t \beta_g(U^{(k-j-1)}_t)^* - U^{(k-j-1)}_t \beta_g(U^{(k-j-1)}_t)^* X_t^{(j,k)}  \| }\\
=& \| (X^{(j,k)}_t-\eins) \big( U^{(k-j-1)}_t \beta_g(U^{(k-j-1)}_t)^*-\eins\big) \\
&- \big( U^{(k-j-1)}_t \beta_g(U^{(k-j-1)}_t)^*-\eins\big) (X^{(j,k)}_t-\eins)\| \\
\leq&  2\cdot 2^{-(2k-j-1)}.
\end{array}
\]
Let us now prove \eqref{eq:approx-mult:4} by induction over $j\leq k$.
We begin with $j=0$.
Here we directly have $\| (v^{(k)*}_t U^{(k)}_t)-\beta_g(v^{(k)*}_t U^{(k)}_t) \| \leq 2^{-2k}$ by \eqref{eq:approx-mult:2}, so there is nothing to show.
Let us proceed with the induction step, and assume that the claimed estimate \eqref{eq:approx-mult:4} holds for some natural number $j<k$.
In order to get it for $j+1$, we use the induction hypothesis and the other estimate from above and compute
\[
\begin{array}{cl}
\multicolumn{2}{l}{ \| X_t^{(j+1,k)} - \beta_g(X_t^{(j+1,k)}) \| } \\
=& \| v^{(k-j-1)*}_t X_t^{(j,k)}  U^{(k-j-1)}_t - \beta_g\big( v^{(k-j-1)*}_t X_t^{(j,k)} U^{(k-j-1)}_t \big) \| \\
\leq& \dst 3 \sum_{n=2k-j}^{2k} 2^{-n} \\
&+ \| v^{(k-j-1)*}_t X_t^{(j,k)} U^{(k-j-1)}_t - \beta_g( v^{(k-j-1)*}_t) X_t^{(j,k)} \beta_g(U^{(k-j-1)}_t) \| \\
=& \dst 3\sum_{n=2k-j}^{2k} 2^{-n} \\
&+ \|  X_t^{(j,k)} U^{(k-j-1)}_t \beta_g(U^{(k-j-1)}_t)^* - v^{(k-j-1)}\beta_g( v^{(k-j-1)*}_t) X_t^{(j,k)}  \| \\
\stackrel{\eqref{eq:approx-mult:2}}{\leq}& \dst 2^{-(2k-j-1)}+3\sum_{n=2k-j}^{2k} 2^{-n}  \\
& + \|  X_t^{(j,k)} U^{(k-j-1)}_t \beta_g(U^{(k-j-1)}_t)^* - U^{(k-j-1)}_t \beta_g(U^{(k-j-1)}_t)^* X_t^{(j,k)}  \| \\
\leq& \dst 3 \sum_{n=2k-j-1}^{2k} 2^{-n} .
\end{array}
\]
As pointed out above, the proof is complete by considering $j=k$.
\end{proof}

The following two corollaries represent the main technical achievement of this section.

\begin{cor} \label{cor:asymptotic-unitary-equivalence}
Let $A$ be a separable \cstar-algebra and $B$ a $\sigma$-unital \cstar-algebra.
Let $\alpha: G\curvearrowright A$ and $\beta: G\curvearrowright B$ be two actions, and let
\[
(\phi,\Iu), (\psi,\Iv): (A,\alpha)\to (\CM(B),\beta)
\]
be two cocycle representations.
Suppose that there exists a norm-continuous path of unitaries $U: [0,\infty)\to\CU(D_{(\phi,\Iu)})$ with $U_0=\eins$ and
\[
\lim_{t\to\infty} \Big( \|\psi(a)-U_t\phi(a)U_t^*\| + \max_{g\in K} \|\Iv_g-U_t\Iu_g\beta_g(U_t)^*\| \Big) = 0
\]
for all $a\in A$ and every compact set $K\subseteq G$.
Then $(\phi,\Iu)$ and $(\psi,\Iv)$ are strongly asymptotically unitarily equivalent.
\end{cor}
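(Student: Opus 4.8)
The plan is to deduce this from \autoref{lem:approximate-multipliers}, but applied to the cocycle-perturbed action $\beta^{\Iu}$ rather than to $\beta$ itself. This is what makes the hypothesis ``$U$ takes values in $\CU(D_{(\phi,\Iu)})$'' usable: by construction $D_{(\phi,\Iu)}\subseteq\CM^{\beta^{\Iu}}(B)$, so $U$ is a norm-continuous path of unitaries in $\CU(\CM^{\beta^{\Iu}}(B))$ with $U_0=\eins$. Moreover, set $D:=\cstar(\phi(A))\subseteq\CM(B)$, which is a separable \cstar-subalgebra, and note that $[U_t,D]\subseteq B$ for every $t\geq 0$: indeed $U_t\in D_{(\phi,\Iu)}$ gives $[U_t,\phi(A)]\subseteq B$, and since $B$ is an ideal in $\CM(B)$ this extends to all of $\cstar(\phi(A))$. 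As $\beta^{\Iu}$ is again a (point-norm continuous) action of $G$ on $B$, \autoref{lem:approximate-multipliers} applies with $\beta^{\Iu}$, $D$ and $U$ as input, producing a norm-continuous path $v\colon[0,\infty)\to\CU(\eins+B)$ with $v_0=\eins$ such that, writing $W_t:=v_t^*U_t$, we have $\lim_{t\to\infty}\|[W_t,\phi(a)]\|=0$ for all $a\in A$ and $\lim_{t\to\infty}\max_{g\in K}\|W_t-\beta^{\Iu}_g(W_t)\|=0$ for every compact $K\subseteq G$. (The remaining conclusion of that lemma, about $(W_t-\eins)b$, is not needed here.)

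It then remains to verify that $v$ implements a strong asymptotic unitary equivalence in the sense of \autoref{def:properasym}. Write $v_t=U_tW_t^*$. For the \cstar-algebra part, $W_t^*\phi(a)W_t=\phi(a)+W_t^*[\phi(a),W_t]$, so
\[
\bigl\|v_t\phi(a)v_t^*-U_t\phi(a)U_t^*\bigr\|=\bigl\|W_t^*\phi(a)W_t-\phi(a)\bigr\|=\bigl\|[\phi(a),W_t]\bigr\|,
\]
which tends to $0$; since $U_t\phi(a)U_t^*\to\psi(a)$ by hypothesis, we get $v_t\phi(a)v_t^*\to\psi(a)$ for all $a\in A$. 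For the cocycle part, the identity $\Iu_g\beta_g(x)=\beta^{\Iu}_g(x)\Iu_g$ yields
\[
v_t\Iu_g\beta_g(v_t)^*=U_tW_t^*\Iu_g\beta_g(W_t)\beta_g(U_t)^*=U_tW_t^*\beta^{\Iu}_g(W_t)\Iu_g\beta_g(U_t)^*,
\]
hence $v_t\Iu_g\beta_g(v_t)^*-U_t\Iu_g\beta_g(U_t)^*=U_tW_t^*\bigl(\beta^{\Iu}_g(W_t)-W_t\bigr)\Iu_g\beta_g(U_t)^*$, whose norm equals $\|\beta^{\Iu}_g(W_t)-W_t\|$ and therefore tends to $0$ uniformly for $g$ in any compact subset of $G$. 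Combined with the hypothesis $\max_{g\in K}\|\Iv_g-U_t\Iu_g\beta_g(U_t)^*\|\to0$, this gives $\max_{g\in K}\|\Iv_g-v_t\Iu_g\beta_g(v_t)^*\|\to0$ for every compact $K\subseteq G$. Together with $v_0=\eins$, this is exactly strong asymptotic unitary equivalence.

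The only genuinely non-routine step is recognizing that \autoref{lem:approximate-multipliers} should be invoked for $\beta^{\Iu}$ (so that $\CM^{\beta^{\Iu}}(B)$ is the ambient algebra of the path $U$); everything after that is bookkeeping, relying on the identity $\Iu_g\beta_g(\cdot)=\beta^{\Iu}_g(\cdot)\Iu_g$ and on the fact that conjugation by unitaries is isometric to reduce the two desired convergences to the two convergences supplied by the lemma together with the given convergences for $U$. I do not anticipate any real obstacle, but one should double-check that the required commutator hypothesis $[U_t,D]\subseteq B$ and the containment $U_t\in\CM^{\beta^{\Iu}}(B)$ genuinely hold for all $t$ (not just asymptotically), which they do by the very definition of $D_{(\phi,\Iu)}$.
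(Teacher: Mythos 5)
Your proof is correct and takes essentially the same route as the paper's: apply \autoref{lem:approximate-multipliers} with $\beta^{\Iu}$ in place of $\beta$ and the range of $\phi$ as the separable subalgebra $D$, then use the resulting corrector $v$ and the identity $\Iu_g\beta_g(\cdot)=\beta^{\Iu}_g(\cdot)\Iu_g$ to transfer the two asymptotic commutation/invariance properties of $v_t^*U_t$ into the two limits needed for strong asymptotic unitary equivalence. The only cosmetic difference is that the paper computes $v_t\phi(a)v_t^*$ and $v_t\Iu_g\beta_g(v_t)^*$ directly while you bound the differences $v_t(\cdot)v_t^*-U_t(\cdot)U_t^*$; the algebra is identical.
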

\begin{proof}
By definition of the \cstar-algebra $D_{(\phi,\Iu)}\subseteq\CM(B)$, we have $U_t\in\CM^{\beta^\Iu}(B)$ and $[U_t,\phi(A)]\subseteq B$ for all $t\geq 0$.
Using \autoref{lem:approximate-multipliers} for $\beta^\Iu$ in place of $\beta$ and $\phi(A)$ in place of $D$, we may find a norm-continuous path of unitaries $v: [0,\infty)\to\CU(\eins+B)$ with $v_0=\eins$ such that
\[
 \lim_{t\to\infty} \| [v_t^*U_t,\phi(a)] \| = 0
\]
for all $a\in A$, and
\[
\lim_{t\to\infty}\max_{g\in K} \| v_t^*U_t - \beta_g^\Iu(v_t^*U_t)\|=0
\]
for every compact set $K\subseteq G$.
This implies for all $a\in A$ that
\[
\psi(a)=\lim_{t\to\infty} U_t\phi(a)U_t^* = \lim_{t\to\infty} v_t(v_t^*U_t)\phi(a)(v_t^*U_t)^*v_t^* = \lim_{t\to\infty} v_t \phi(a) v_t^*.
\]
Similarly we observe for all $g\in G$ that
\[
\begin{array}{cclcl}
\Iv_g &=& \dst \lim_{t\to\infty} U_t\Iu_g\beta_g(U_t)^* 
&=& \dst \lim_{t\to\infty} U_t\beta_g^\Iu(U_t)^*\Iu_g \\
&=& \dst \lim_{t\to\infty} v_t(v_t^*U_t)\beta_g^\Iu(v_t^*U_t)^*\beta_g^\Iu(v_t)^* \Iu_g 
&=& \dst \lim_{t\to\infty} v_t\Iu_g\beta_g(v_t)^*,
\end{array}
\]
and this convergence is uniform over compact subsets of $G$.
This shows that $v$ is a path of unitaries witnessing the claim that $(\phi,\Iu)$ and $(\psi,\Iv)$ are indeed strongly asymptotically unitarily equivalent.
\end{proof}

\begin{cor} \label{cor:op-hom-implies-saue}
Let $A$ be a separable \cstar-algebra and $B$ a $\sigma$-unital \cstar-algebra.
Let $\alpha: G\curvearrowright A$ and $\beta: G\curvearrowright B$ be two actions, and let
\[
(\phi,\Iu), (\psi,\Iv): (A,\alpha)\to (\CM(B),\beta)
\]
be two cocycle representations.
If $(\phi,\Iu)$ and $(\psi,\Iv)$ are operator homotopic, then they are strongly asymptotically unitarily equivalent.
\end{cor}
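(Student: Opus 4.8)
The plan is to deduce this directly from \autoref{cor:asymptotic-unitary-equivalence}; the only thing to do is to upgrade the single connecting unitary provided by operator homotopy to a norm-continuous path of unitaries starting at the identity.

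Concretely, I would argue as follows. Assume $(\phi,\Iu)$ and $(\psi,\Iv)$ are operator homotopic, so by \autoref{def:operator-homotopy} there is a unitary $u\in\CU_0(D_{(\phi,\Iu)})$ with $(\psi,\Iv)=\ad(u)\circ(\phi,\Iu)$; unravelling the definitions, this means $\psi=\ad(u)\circ\phi$ and $\Iv_g=u\Iu_g\beta_g(u)^*$ for all $g\in G$. Since $D_{(\phi,\Iu)}$ is a unital \cstar-algebra and $u$ lies in the norm-connected component of its unit, standard \cstar-algebra theory supplies a norm-continuous path of unitaries $[0,1]\to\CU(D_{(\phi,\Iu)})$, $t\mapsto U_t$, with $U_0=\eins$ and $U_1=u$. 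I would then extend it to a norm-continuous unitary path $U\colon [0,\infty)\to\CU(D_{(\phi,\Iu)})$ by declaring $U_t=u$ for $t\geq 1$, keeping $U_0=\eins$.

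For $t\geq 1$ one has $U_t\phi(a)U_t^*=u\phi(a)u^*=\psi(a)$ for every $a\in A$ and $U_t\Iu_g\beta_g(U_t)^*=u\Iu_g\beta_g(u)^*=\Iv_g$ for every $g\in G$, so the quantity
\[
\|\psi(a)-U_t\phi(a)U_t^*\| + \max_{g\in K}\|\Iv_g-U_t\Iu_g\beta_g(U_t)^*\|
\]
is identically zero for $t\geq 1$, hence converges to $0$ as $t\to\infty$, for every $a\in A$ and every compact set $K\subseteq G$. Thus $U$ meets the hypotheses of \autoref{cor:asymptotic-unitary-equivalence}, and that corollary immediately gives that $(\phi,\Iu)$ and $(\psi,\Iv)$ are strongly asymptotically unitarily equivalent.

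I do not anticipate any real obstacle here: all the genuine work sits in \autoref{lem:approximate-multipliers} and \autoref{cor:asymptotic-unitary-equivalence}, and the present statement is a purely formal consequence once one records that membership in $\CU_0$ is witnessed by a continuous unitary path inside $D_{(\phi,\Iu)}$.
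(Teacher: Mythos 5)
Your proof is correct and is exactly the intended argument: the paper states \autoref{cor:op-hom-implies-saue} without a separate proof precisely because it is an immediate consequence of \autoref{cor:asymptotic-unitary-equivalence} in the way you describe. Taking a norm-continuous path in $\CU(D_{(\phi,\Iu)})$ from $\eins$ to $u$ (which exists by definition of $\CU_0$), extending it constantly on $[1,\infty)$, and feeding it into \autoref{cor:asymptotic-unitary-equivalence} is the whole content.
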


%%%%%%%%%%%%%%%%%%%%%%%%%%%%%%%%%

\section{The dynamical stable uniqueness theorem}

\begin{defi}
Let $(\phi,\Iu), (\psi,\Iv): (A,\alpha) \to (\CM(B),\beta)$ be two cocycle representations.
Suppose that there exists a unital inclusion $\CO_2\subseteq\CM(B)^\beta$.
Then we call $(\phi,\Iu)$ and $(\psi,\Iv)$ \emph{stably properly (resp.\ strongly) asymptotically unitarily equivalent}, if there exists a cocycle representation $(\kappa,\Ix): (A,\alpha)\to(\CM(B),\beta)$ such that $(\phi,\Iu)\oplus(\kappa,\Ix)$ is properly (resp.\ strongly) asymptotically unitarily equivalent to $(\psi,\Iv)\oplus(\kappa,\Ix)$.
\end{defi}

\begin{lemma} \label{lem:pasu}
Suppose $\beta$ is strongly stable.
Let $(\phi,\Iu), (\psi,\Iv): (A,\alpha) \to (\CM(B),\beta)$ be two cocycle representations.
If $(\phi,\Iu)$ and $(\psi,\Iv)$ are properly asymptotically unitarily equivalent, then they form an $(\alpha,\beta)$-Cuntz pair $\big( (\phi,\Iu), (\psi,\Iv) \big)$ that is homotopic to $\big((0, \eins), (0, \eins)\big)$.
\end{lemma}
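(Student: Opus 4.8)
The plan is to unpack the definition of proper asymptotic unitary equivalence and to feed the resulting path of unitaries into the homotopy machinery already developed in the preliminary section. By hypothesis there is a norm-continuous path $u\colon[0,\infty)\to\CU(\eins+B)$ with $\psi(a)=\lim_{t\to\infty}u_t\phi(a)u_t^\ast$ uniformly on compacta in $a$ and $\Iv_g=\lim_{t\to\infty}u_t\Iu_g\beta_g(u_t)^\ast$ uniformly over compact subsets of $G$. The first step is to observe that this already forces $\big((\phi,\Iu),(\psi,\Iv)\big)$ to be an $(\alpha,\beta)$-Cuntz pair: for each fixed $t$, the differences $u_t\phi(a)u_t^\ast-\phi(a)=[u_t,\phi(a)]u_t^\ast$ and $u_t\Iu_g\beta_g(u_t)^\ast-\Iu_g=(u_t-\beta^{\Iu}_g(u_t))\beta^\Iu_g(u_t)^\ast\Iu_g$ lie in $B$ because $u_t\in\eins+B$; hence $\ad(u_t)\circ(\phi,\Iu)$ forms a Cuntz pair with $(\phi,\Iu)$, and passing to the limit $t\to\infty$ (the convergence is in norm, pointwise in $a$ and uniform over compact $g$), the differences $\psi(a)-\phi(a)$ and $\Iv_g-\Iu_g$ are norm-limits of elements of $B$, hence lie in $B$. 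So $(\phi,\Iu)$ and $(\psi,\Iv)$ form an $(\alpha,\beta)$-Cuntz pair.

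For the homotopy statement, the strategy is to build an explicit $(\alpha,\beta[0,1])$-Cuntz pair interpolating between $\big((\phi,\Iu),(\psi,\Iv)\big)$ and a degenerate one, using a reparametrization of $u$. Concretely, reparametrize $[0,\infty)$ onto $[0,1)$ via some homeomorphism $s\mapsto t(s)$ and consider the path $s\mapsto\ad(u_{t(s)})\circ(\phi,\Iu)$ on $[0,1)$, extending it at $s=1$ by $(\psi,\Iv)$; by the asymptotic hypotheses this extension is continuous at the endpoint, in the appropriate point-strict/uniform-strict senses, so $\big((\Phi,\IU),(\Psi,\IV)\big)$ given by $\Phi_s=\ad(u_{t(s)})\circ\phi$ (with $\Psi_s=\psi$ constant) defines an element of $\IE^G(\alpha,\beta[0,1])$. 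Evaluating at $s=0$ gives $\big((\phi,\Iu),(\psi,\Iv)\big)$ and at $s=1$ gives $\big((\psi,\Iv),(\psi,\Iv)\big)\in\ID^G(\alpha,\beta)$. This shows $\big((\phi,\Iu),(\psi,\Iv)\big)\sim_h d$ for a degenerate Cuntz pair $d$. Finally, combining with \autoref{lem:equivalence-simplified}\ref{lem:equivalence-simplified:1} and \autoref{prop:Cuntz-pair-small-perturbation} (or directly the homotopy constructed in the proof of \autoref{lem:equivalence-simplified}), one concludes that $\big((\phi,\Iu),(\psi,\Iv)\big)$ is homotopic to $\big((0,\eins),(0,\eins)\big)$: indeed any degenerate Cuntz pair is homotopic to $0$, and the path $(\phi^{(t)},\Iu^{(t)})$ from \eqref{eq:phitut} exhibits $\big((\psi,\Iv),(\psi,\Iv)\big)\sim_h\big((0,\eins),(0,\eins)\big)$ after noting strong stability gives the required isometries in $\CM(B)^\beta$.

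The main obstacle I anticipate is purely a matter of continuity bookkeeping at the endpoint $s=1$: one must check that $\Phi_s(a)$ converges \emph{strictly} (not just in norm on the relevant pieces) to $\psi(a)$ and that $\IU_s$ converges strictly and uniformly over compacta to $\Iv$, so that the interpolating family genuinely lands in $\IE^G(\alpha,\beta[0,1])$ rather than merely being a family of Cuntz pairs. Since $\psi(a)-\Phi_s(a)\in B$ and $\Iv_g-\IU_{s,g}\in B$ with norm tending to $0$, while $\psi(a)$ and $\Iv$ are fixed multipliers, strict continuity at the endpoint follows from norm continuity of these differences together with strict continuity of the constant terms; this is routine but should be stated carefully. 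Everything else is a direct application of \autoref{lem:equivalence-simplified} and \autoref{prop:D-unitaries}.
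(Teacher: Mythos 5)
Your overall plan coincides with the paper's: unpack the definition to get the Cuntz pair condition, then build an explicit homotopy in $\IE^G(\alpha,\beta[0,1])$ by conjugating $(\phi,\Iu)$ along a reparametrization of the unitary path, and finally contract the resulting degenerate pair to $0$ via the ``shrinking'' path \eqref{eq:phitut}.

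There is, however, a genuine slip at the $s=0$ endpoint, and it is precisely the place where the distinction between \emph{proper} and \emph{strong} asymptotic unitary equivalence bites. You reparametrize $[0,\infty)$ onto $[0,1)$ by $s\mapsto t(s)$ with $t(0)=0$, and you claim that evaluating $\Phi_s=\ad(u_{t(s)})\circ\phi$ at $s=0$ gives $(\phi,\Iu)$. But that requires $u_0=\eins$, which is exactly what proper (as opposed to strong) asymptotic unitary equivalence does \emph{not} provide: the definition only gives a norm-continuous path in $\CU(\eins+B)$ with no normalization at $t=0$. So what your homotopy actually produces at $s=0$ is $\big(\ad(u_0)\circ(\phi,\Iu),\,(\psi,\Iv)\big)$, and you must invoke \autoref{prop:Cuntz-pair-small-perturbation} to replace this by $\big((\phi,\Iu),(\psi,\Iv)\big)$, using that $u_0\in\CU(\eins+B)$. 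You do cite that proposition at the end, but your stated justification (``any degenerate Cuntz pair is homotopic to $0$'') only explains the role of \autoref{lem:equivalence-simplified}, not of \autoref{prop:Cuntz-pair-small-perturbation}, so the reader is left with the impression that the $u_0\neq\eins$ issue never arises. The paper's own proof is structured to make this step explicit: the interpolating family is written so that its boundary value is $\ad(u_1)\circ(\phi,\Iu)$, and \autoref{prop:Cuntz-pair-small-perturbation} is invoked precisely to dispose of the conjugation by $u_1$. Once you correct the evaluated endpoint and route \autoref{prop:Cuntz-pair-small-perturbation} to that purpose, the rest of your argument — including the continuity bookkeeping at the other endpoint, which you correctly identify and which works because the differences are norm-small elements of $B$ added to fixed multipliers — goes through and matches the paper.
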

\begin{proof}
Let $\set{u_t}_{t\geq 1}\subset\CU(\eins+B)$ be a continuous unitary path witnessing the assumption.
Under the quotient map $\CM(B)\to\CQ(B)$, one trivially has $\bar{u}_t= \eins$ for all $t\geq 1$, so the relation $\lim_{t\to\infty} \|\psi(a)-u_t\phi(a)u_t^*\|=0$ implies $\bar{\psi}(a)=\bar{\phi}(a)$ for all $a\in A$.
In other words, one has $\psi(a)-\phi(a)\in B$ for all $a\in A$.
By repeating this argument for the cocycles, we likewise see that $\Iv_g-\Iu_g\in B$ for all $g\in G$.
Hence $(\phi,\Iu)$ and $(\psi,\Iv)$ indeed form a Cuntz pair.

If we set
\[
(\phi^{(t)},\Iu^{(t)}) = \begin{cases} \ad(u_{1/t})\circ(\phi,\Iu) &,\quad t\in (0,1] \\
(\psi,\Iv) &,\quad t=0, \end{cases}
\]
then $[0,1]\ni t\mapsto \big( (\phi^{(t)},\Iu^{(t)}), (\psi,\Iv) \big)$ defines a homotopy between a degenerate Cuntz pair and the pair $\big(\ad(u_1)\circ(\phi,\Iu), (\psi,\Iv) \big)$.
Since $u_1\in\CU(\eins+B)$, the latter Cuntz pair is homotopic to $\big( (\phi,\Iu), (\psi,\Iv) \big)$ by \autoref{prop:Cuntz-pair-small-perturbation}, which with \autoref{lem:equivalence-simplified} finishes the proof.
\end{proof}

The following generalizes \cite[Lemma 3.4]{DadarlatEilers01}.

\begin{lemma} \label{lem:reduction-to-absorbing-case}
Suppose that there exists a unital inclusion $\CO_2\subseteq\CM(B)^\beta$.
Let four cocycle representations
\[
(\phi,\Iu), (\psi,\Iv), (\rho,\Iw), (\theta,\Ix): (A,\alpha) \to (\CM(B),\beta)
\]
be given.
If $(\phi,\Iu)\oplus(\rho,\Iw)$ is strongly asymptotically unitarily equivalent to $(\psi,\Iv)\oplus(\rho,\Iw)$ and $(\rho,\Iw)\asymp (\theta,\Ix)$, then it follows that $(\phi,\Iu)\oplus(\theta,\Ix)$ is strongly asymptotically unitarily equivalent to $(\psi,\Iv)\oplus(\theta,\Ix)$.
\end{lemma}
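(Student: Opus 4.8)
The plan is to argue formally, by conjugating the path that witnesses strong asymptotic unitary equivalence after stabilizing by $(\rho,\Iw)$ with a path of multiplier unitaries that ``rotates'' the $(\rho,\Iw)$-summand onto $(\theta,\Ix)$. First I would fix generating isometries $r_1,r_2$ of the given unital copy $\CO_2\subseteq\CM(B)^\beta$ and agree that every Cuntz sum below is formed with respect to them; since changing the isometries amounts to conjugating everything by a unitary $w\in\CM(B)^\beta$, which preserves $\eins+B$ and is fixed by every $\beta_g$, this affects neither the hypotheses nor the conclusion. Let $(u_t)_{t\ge 0}$ in $\CU(\eins+B)$ with $u_0=\eins$ be a path implementing the assumed strong asymptotic unitary equivalence, and let $(w_t)_{t\ge 1}$ in $\CU(\CM(B))$ be a path as in \autoref{def:various-equivalences} witnessing $(\rho,\Iw)\asymp(\theta,\Ix)$; I would extend it to $[0,\infty)$ by declaring $w_t:=w_1$ for $t\in[0,1]$.

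Next I would put $W_t:=r_1r_1^*+r_2w_tr_2^*\in\CU(\CM(B))$, so that $\ad(W_t)$ leaves the content of the first Cuntz summand unchanged and acts as $\ad(w_t)$ on the second, and define $U_t:=W_tu_tW_t^*$ for all $t\in[0,\infty)$. The crucial point is that $U_t$ automatically lies in $\CU(\eins+B)$: writing $u_t=\eins+b_t$ with $b_t\in B$ one gets $U_t=\eins+W_tb_tW_t^*$, and $W_tb_tW_t^*\in B$ because $B$ is an ideal in $\CM(B)$. Furthermore $U_0=W_1\eins W_1^*=\eins$ and $t\mapsto U_t$ is norm-continuous, so in view of \autoref{def:properasym} it only remains to verify the two asymptotic conditions for $U$ with respect to $(\phi,\Iu)\oplus(\theta,\Ix)$ and $(\psi,\Iv)\oplus(\theta,\Ix)$.

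For the first condition I would compute, using $r_i\in\CM(B)^\beta$, that $W_t^*\big((\phi\oplus\theta)(a)\big)W_t=r_1\phi(a)r_1^*+r_2\,w_t^*\theta(a)w_t\,r_2^*$; the relation $(\rho,\Iw)\asymp(\theta,\Ix)$ gives $\|w_t^*\theta(a)w_t-\rho(a)\|=\|\theta(a)-w_t\rho(a)w_t^*\|\to0$, so this converges in norm to $(\phi\oplus\rho)(a)$. Applying the isometric map $\ad(u_t)$ and using $\|u_t(\phi\oplus\rho)(a)u_t^*-(\psi\oplus\rho)(a)\|\to0$, then $\ad(W_t)$ and $\|w_t\rho(a)w_t^*-\theta(a)\|\to0$, yields $\|U_t(\phi\oplus\theta)(a)U_t^*-(\psi\oplus\theta)(a)\|\to0$ for every $a\in A$. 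The cocycle condition is entirely parallel: since $\beta_g(W_t)=r_1r_1^*+r_2\beta_g(w_t)r_2^*$, one gets $W_t^*(\Iu\oplus\Ix)_g\beta_g(W_t)=r_1\Iu_gr_1^*+r_2\,w_t^*\Ix_g\beta_g(w_t)\,r_2^*$, which by the uniform-on-compacts estimate from \autoref{def:various-equivalences} converges to $(\Iu\oplus\Iw)_g$ uniformly for $g$ in any compact $K\subseteq G$; conjugating successively by $u_t$ and by $W_t$ and invoking the corresponding uniform limits for $u$ and for $w$ then gives $\max_{g\in K}\|(\Iv\oplus\Ix)_g-U_t(\Iu\oplus\Ix)_g\beta_g(U_t)^*\|\to0$. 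This exhibits $(\phi,\Iu)\oplus(\theta,\Ix)$ and $(\psi,\Iv)\oplus(\theta,\Ix)$ as strongly asymptotically unitarily equivalent, as required.

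I do not anticipate a substantial obstacle, as the whole argument is bookkeeping; the only spots that require a little care are the independence of the hypotheses from the choice of Cuntz-sum isometries, and keeping track, in the three-stage conjugation, that the error term produced at the first stage stays uniformly small after the remaining two conjugations --- which it does precisely because those conjugations are isometric.
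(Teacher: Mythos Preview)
Your proof is correct and is essentially the paper's own argument: the paper also conjugates the given path $u_t\in\CU(\eins+B)$ by $\eins\oplus v_t=r_1r_1^*+r_2v_tr_2^*$ (your $W_t$) and verifies the two asymptotic conditions via the same three-stage estimate. The only cosmetic difference is that you explicitly extend the $\asymp$-path to $[0,1]$ as a constant to secure $U_0=\eins$, whereas the paper leaves this implicit.
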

\begin{proof}
Let $\set{u_t}_{t\geq 1}\subset\CU(\eins+B)$ be a norm-continuous path with $u_0=\eins$ witnessing that $(\phi\oplus\rho,\Iu\oplus\Iw)$ and $(\psi\oplus\rho,\Iv\oplus\Iw)$ are strongly asymptotically unitarily equivalent.
Let $\set{v_t}_{t\geq 1}\subset\CU(\CM(B))$ be a norm-continuous path witnessing $(\rho,\Iw)\asymp (\theta,\Ix)$ in the sense of \autoref{def:various-equivalences}.
Then evidently $w_t=(\eins\oplus v_t)u_t(\eins\oplus v_t^*)$ defines a norm-continuous unitary path in $\CU(\eins+B)$ with $w_0=\eins$, and it satisfies for all $a\in A$ that
\[
\begin{array}{cl}
\multicolumn{2}{l}{ \| (\psi\oplus\theta)(a) - w_t(\phi\oplus\theta)(a)w_t^* \| }\\
=& \| (\psi\oplus\theta)(a) - (\eins\oplus v_t)u_t(\eins\oplus v_t^*)(\phi\oplus\theta)(a)(\eins\oplus v_t)u_t^*(\eins\oplus v_t^*) \| \\
\leq& \| v_t^* \theta(a) v_t - \rho(a)\| + \| (\psi\oplus\theta)(a) - (\eins\oplus v_t)u_t(\phi\oplus\rho)(a)u_t^*(\eins\oplus v_t^*) \| \\
\leq& \| \theta(a)  - v_t\rho(a)v_t^*\| + \| (\psi\oplus\rho)(a)-u_t(\phi\oplus\rho)(a)u_t^*\| \\
& +\| (\psi\oplus\theta)(a) - (\eins\oplus v_t)(\psi\oplus\rho)(a)(\eins\oplus v_t^*) \| \\
\leq& 2\| \theta(a) - v_t\rho(a)v_t^*\| + \| (\psi\oplus\rho)(a)-u_t(\phi\oplus\rho)(a)u_t^*\| \ \to \ 0.
\end{array}
\]
Likewise we see for every compact set $K\subseteq G$ that
\[
\begin{array}{cl}
\multicolumn{2}{l}{ \dst\max_{g\in K} \| (\Iv_g\oplus\Ix_g) - w_t(\Iu_g\oplus\Ix_g)\beta_g(w_t)^* \| }\\
=& \dst\max_{g\in K} \| (\Iv_g\oplus\Ix_g) - (\eins\oplus v_t)u_t (\Iu_g\oplus v_t^*\Ix_g\beta_g(v_t)) \beta_g(u_t)^*(\eins\oplus \beta_g(v_t)^*) \| \\
\leq& \dst\max_{g\in K} \Big( \| v_t^*\Ix_g\beta_g(v_t)-\Iw_g\| \\
& \dst + \| (\Iv_g\oplus\Ix_g) - (\eins\oplus v_t)u_t (\Iu_g\oplus\Iw_g) \beta_g(u_t)^*(\eins\oplus \beta_g(v_t)^*) \| \Big) \\
\leq& \dst\max_{g\in K} \Big( \|\Ix_g-v_t\Iw_g\beta_g(v_t)^*\| + \|(\Iv_g\oplus\Iw_g) - u_t (\Iu_g\oplus\Iw_g) \beta_g(u_t)^* \| \\
& \dst+ \| (\Iv_g\oplus\Ix_g) - (\eins\oplus v_t) (\Iv_g\oplus\Iw_g) (\eins \oplus \beta_g(v_t)^*) \| \Big) \\
=& \dst\max_{g\in K} \Big( 2\|\Ix_g-v_t\Iw_g\beta_g(v_t)^*\| + \|(\Iv_g\oplus\Iw_g) - u_t (\Iu_g\oplus\Iw_g) \beta_g(u_t)^* \|\Big)  \ \to \ 0.
\end{array}
\]
This shows our claim.
\end{proof}

We finally have everything ready to state and prove our main result:

\begin{theorem} \label{thm:main}
Let $A$ be a separable and $B$ a $\sigma$-unital \cstar-algebra, and let $G$ be a second-countable locally compact group.
Let $\alpha: G\curvearrowright A$ and $\beta: G\curvearrowright B$ be two continuous actions.
Let
\[
(\phi,\Iu), (\psi,\Iv): (A,\alpha)\to (\CM(B\otimes\CK),\beta\otimes\id_\CK)
\]
be two cocycle representations that form an anchored $(\alpha,\beta)$-Cuntz pair.
Then the following are equivalent:
\begin{enumerate}[label=\textup{(\roman*)},leftmargin=*]
\item $\big[ (\phi,\Iu), (\psi,\Iv) \big] = 0$ in $KK^G(\alpha,\beta)$. \label{thm:main:1}
\item $(\phi,\Iu)$ and $(\psi,\Iv)$ are stably operator homotopic. \label{thm:main:2}
\item $(\phi,\Iu)$ and $(\psi,\Iv)$ are stably strongly asymptotically unitarily equivalent.\label{thm:main:3}
\item $(\phi,\Iu)$ and $(\psi,\Iv)$ are stably properly asymptotically unitarily equivalent.\label{thm:main:4}
\end{enumerate}
If $B$ is separable, then these statements are further equivalent to
\begin{enumerate}[label=\textup{(\roman*)},leftmargin=*,resume]
\item For every absorbing cocycle representation 
\[
(\theta,\Iy): (A,\alpha)\to(\CM(B\otimes\CK),\beta\otimes\id_\CK),
\] 
one has that $(\phi,\Iu)\oplus(\theta,\Iy)$ is strongly asymptotically unitarily equivalent to $(\psi,\Iv)\oplus(\theta,\Iy)$. \label{thm:main:5}
\end{enumerate}
\end{theorem}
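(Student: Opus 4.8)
The plan is to close the cycle of implications \ref{thm:main:1}$\Rightarrow$\ref{thm:main:2}$\Rightarrow$\ref{thm:main:3}$\Rightarrow$\ref{thm:main:4}$\Rightarrow$\ref{thm:main:1}, and then, under the separability hypothesis, establish \ref{thm:main:3}$\Leftrightarrow$\ref{thm:main:5}. This is essentially an assembly of the technical results of the preceding sections: all of them that require strong stability will be applied with $B\otimes\CK$ and $\beta\otimes\id_\CK$ in place of $B$ and $\beta$, which is legitimate since $\beta\otimes\id_\CK$ is strongly stable. In particular a unital copy of $\CO_2$ is available inside $\CM(B\otimes\CK)^{\beta\otimes\id_\CK}$ via $\CO_2\subseteq\CM(\CK)$, so the various ``stable'' notions are meaningful here.

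For \ref{thm:main:1}$\Rightarrow$\ref{thm:main:2}, we apply \autoref{thm:homotopy-implies-operator-homotopy}: the pair is anchored by hypothesis and has vanishing $KK^G$-class, hence it is stably operator homotopic. For \ref{thm:main:2}$\Rightarrow$\ref{thm:main:3}, unpacking the definition yields a cocycle representation $(\kappa,\Ix)$ making $(\phi,\Iu)\oplus(\kappa,\Ix)$ and $(\psi,\Iv)\oplus(\kappa,\Ix)$ operator homotopic, and \autoref{cor:op-hom-implies-saue} upgrades this to strong asymptotic unitary equivalence, so the two original representations are stably strongly asymptotically unitarily equivalent. The implication \ref{thm:main:3}$\Rightarrow$\ref{thm:main:4} is immediate, as a strong asymptotic unitary equivalence is in particular a proper one. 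For \ref{thm:main:4}$\Rightarrow$\ref{thm:main:1}, let $(\kappa,\Ix)$ witness stable proper asymptotic unitary equivalence; by \autoref{lem:pasu} the pair $\big((\phi,\Iu)\oplus(\kappa,\Ix),(\psi,\Iv)\oplus(\kappa,\Ix)\big)$ is a Cuntz pair homotopic to $\big((0,\eins),(0,\eins)\big)$, hence has vanishing $KK^G$-class. Since this pair equals $\big((\phi,\Iu),(\psi,\Iv)\big)\oplus d$ for the degenerate Cuntz pair $d=\big((\kappa,\Ix),(\kappa,\Ix)\big)$, \autoref{lem:equivalence-simplified} gives $\big[(\phi,\Iu),(\psi,\Iv)\big]=\big[\big((\phi,\Iu),(\psi,\Iv)\big)\oplus d\big]=0$ in $KK^G(\alpha,\beta)$.

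Now assume $B$ separable, so that $B\otimes\CK$ is separable and \autoref{thm:abs-rep-existence} furnishes an absorbing cocycle representation; thus \ref{thm:main:5} is non-vacuous, and choosing such a $(\theta,\Iy)$ as the auxiliary summand shows \ref{thm:main:5}$\Rightarrow$\ref{thm:main:3} at once. For the converse \ref{thm:main:3}$\Rightarrow$\ref{thm:main:5}, fix an arbitrary absorbing $(\theta,\Iy)$ and let $(\kappa,\Ix)$ witness \ref{thm:main:3}, i.e.\ $(\phi,\Iu)\oplus(\kappa,\Ix)$ is strongly asymptotically unitarily equivalent to $(\psi,\Iv)\oplus(\kappa,\Ix)$. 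Forming the Cuntz sum of the implementing unitary path with the constant path $\eins$ (using $\beta$-fixed isometries) shows that strong asymptotic unitary equivalence is preserved under adding the fixed summand $(\theta,\Iy)$ to both sides, so $(\phi,\Iu)\oplus\big((\kappa,\Ix)\oplus(\theta,\Iy)\big)$ and $(\psi,\Iv)\oplus\big((\kappa,\Ix)\oplus(\theta,\Iy)\big)$ are strongly asymptotically unitarily equivalent. Since $(\theta,\Iy)$ is absorbing we have $(\kappa,\Ix)\oplus(\theta,\Iy)\asymp(\theta,\Iy)$, so \autoref{lem:reduction-to-absorbing-case} applied with $(\kappa,\Ix)\oplus(\theta,\Iy)$ in the role of $(\rho,\Iw)$ delivers $(\phi,\Iu)\oplus(\theta,\Iy)\sim(\psi,\Iv)\oplus(\theta,\Iy)$ in the sense of strong asymptotic unitary equivalence, which is \ref{thm:main:5}.

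All the genuine content resides in the earlier sections, so the present proof is mostly bookkeeping; the one point needing care is the step \ref{thm:main:3}$\Rightarrow$\ref{thm:main:5}. One cannot feed \autoref{lem:reduction-to-absorbing-case} directly with $(\kappa,\Ix)$ and $(\theta,\Iy)$, since $(\kappa,\Ix)\asymp(\theta,\Iy)$ need not hold. The remedy is to first enlarge the witnessing representation from $(\kappa,\Ix)$ to $(\kappa,\Ix)\oplus(\theta,\Iy)$, which \emph{is} $\asymp$-equivalent to $(\theta,\Iy)$ precisely because $(\theta,\Iy)$ absorbs every cocycle representation.
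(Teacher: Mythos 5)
Your proof is correct and follows essentially the same route as the paper: the cycle \ref{thm:main:1}$\Rightarrow$\ref{thm:main:2}$\Rightarrow$\ref{thm:main:3}$\Rightarrow$\ref{thm:main:4}$\Rightarrow$\ref{thm:main:1} assembled from \autoref{thm:homotopy-implies-operator-homotopy}, \autoref{cor:op-hom-implies-saue}, and \autoref{lem:pasu}, plus the absorption argument via \autoref{thm:abs-rep-existence} and \autoref{lem:reduction-to-absorbing-case} for \ref{thm:main:3}$\Leftrightarrow$\ref{thm:main:5}. You spell out two steps that the paper leaves terse---passing from the stabilized Cuntz pair to the original one via \autoref{lem:equivalence-simplified} in \ref{thm:main:4}$\Rightarrow$\ref{thm:main:1}, and the need to feed $(\kappa,\Ix)\oplus(\theta,\Iy)$ rather than $(\kappa,\Ix)$ into \autoref{lem:reduction-to-absorbing-case}---but the underlying argument is identical.
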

\begin{proof}
The implication \ref{thm:main:1}$\Rightarrow$\ref{thm:main:2} is \autoref{thm:homotopy-implies-operator-homotopy}.
The implication \ref{thm:main:2}$\Rightarrow$\ref{thm:main:3} is clearly a consequence of \autoref{cor:op-hom-implies-saue}.
The implication \ref{thm:main:3}$\Rightarrow$\ref{thm:main:4} is trivial, and \ref{thm:main:4}$\Rightarrow$\ref{thm:main:1} follows from \autoref{lem:pasu}.

Now let us also assume that $B$ is separable.
By \autoref{thm:abs-rep-existence}, there exists an absorbing cocycle representation $(\theta,\Iy)$ as in the statement, one of which we shall now choose.
Therefore clearly \ref{thm:main:5}$\Rightarrow$\ref{thm:main:3}.
So it suffices to show \ref{thm:main:3}$\Rightarrow$\ref{thm:main:5}.
Assuming that $(\phi,\Iu)$ and $(\psi,\Iv)$ are stably strongly asymptotically unitarily equivalent, let $(\kappa,\Ix): (A,\alpha)\to (\CM(B\otimes\CK),\beta\otimes\id_\CK)$ be any cocycle representation such that $(\phi,\Iu)\oplus(\kappa,\Ix)$ is strongly asymptotically unitarily equivalent to $(\psi,\Iv)\oplus(\kappa,\Ix)$.
Then $(\phi,\Iu)\oplus(\kappa,\Ix)\oplus(\theta,\Iy)$ is also strongly asymptotically unitarily equivalent to $(\psi,\Iv)\oplus(\kappa,\Ix)\oplus(\theta,\Iy)$.
Since $(\theta,\Iy)$ is absorbing, we have $(\kappa,\Ix)\oplus(\theta,\Iy) \asymp (\theta,\Iy)$.
Hence the claim follows directly from \autoref{lem:reduction-to-absorbing-case}.
The proof is complete.
\end{proof}

%%%%%%%%%%%%%%%%%%%%%%%%%%%%%%%%%%%%%%%%%%%%%%%%%%%%%%%%%%%%%%%%%%

\end{document}